\newcommand{\red}[1]{\textcolor{red}{#1}} 
\newcommand*{\QEDA}{\hfill\hbox{\vrule width1.0ex height1.0ex}}
\newtcolorbox{eqbox}{
  colback=gray!5!white,
  colframe=gray!80!black,
  boxrule=0.5pt,
  arc=4pt,
  left=6pt,
  right=6pt,
  top=4pt,
  bottom=4pt,
  enhanced
}
\newtheorem{thm}{Theorem}[section]
\newtheorem{theorem}[thm]{Theorem}
\newtheorem{lemma}[thm]{Lemma}
\newtheorem{proposition}[thm]{Proposition}
\newcommand{\beq}{\begin{equation}}
\newcommand{\eeq}{\end{equation}}
\newcommand{\beqa}{\begin{eqnarray}}
\newcommand{\eeqa}{\end{eqnarray}}
\newcommand{\beqas}{\begin{eqnarray*}}
\newcommand{\eeqas}{\end{eqnarray*}}
\newcommand{\bi}{\begin{itemize}}
\newcommand{\ei}{\end{itemize}}
\newcommand{\vgap}{\vspace{.1in}}
\newcommand{\nn}{\nonumber}
\newcommand{\R}{\mathbb{R}}
\newcommand{\lam}{{\lambda}}
\newcommand{\inner}[2]{\langle #1,#2\rangle}
\newcommand{\Inner}[2]{\left \langle #1\,,#2 \right \rangle}
\newcommand{\dom}{\mathrm{dom}\,}
\newcommand{\Argmin}{\mathrm{Argmin}\,}
\newcommand{\argmin}{\mathrm{argmin}\,}
\newcommand{\bConv}[1]{\overline{\mbox{\rm Conv}}\,(\R^{#1})}
\newcommand{\mConv}[1]{\overline{\mbox{\rm Conv}}_\mu\,(\R^{#1})}
\newcommand{\nConv}[1]{\overline{\mbox{\rm Conv}}_\nu\,(\R^{#1})}
\newcommand{\BB}{\operatorname{BB}}
\title{Universal subgradient and proximal bundle methods for \\ convex and strongly convex hybrid composite optimization}
\author{
		Vincent Guigues \thanks{School of Applied Mathematics FGV/EMAp, 22250-900 Rio de Janeiro, Brazil. (email: {\tt vincent.guigues@fgv.br}).} 
        \qquad
		Jiaming Liang \thanks{
        Goergen Institute for Data Science and Artificial Intelligence (GIDS-AI) and Department of Computer Science, University of Rochester, Rochester, NY 14620 (email: {\tt jiaming.liang@rochester.edu}). This work was partially supported by GIDS-AI seed funding and AFOSR grant FA9550-25-1-0182.}
        \qquad
            Renato D.C. Monteiro \thanks{School of Industrial and Systems
			Engineering, Georgia Institute of
			Technology, Atlanta, GA 30332.
			(email: {\tt renato.monteiro@isye.gatech.edu}). This work
			was partially supported by AFOSR Grants FA9550-22-1-0088 and FA9550-25-1-0131.}
	}
\date{July 14, 2024 (revisions: August 2, 2024; June 25, 2025; November 20, 2025)}
\begin{document}

\maketitle

\begin{abstract}
This paper 
develops two parameter-free methods for
solving
convex and strongly convex hybrid composite universal
problems, namely,
 a composite subgradient
 type method and a proximal bundle type method.
Functional complexity bounds for the two methods are established in terms of the unknown strong convexity parameter. The two proposed methods are  universal with
respect to all problem parameters, including the strong convexity one,
and 
require no knowledge of the optimal value.
Moreover, in contrast to previous works,
they do not
restart nor use multiple threads.\\

{\bf Key words.} hybrid composite optimization, iteration complexity, universal method, proximal bundle method
		\\
		
		{\bf AMS subject classifications.} 
		49M37, 65K05, 68Q25, 90C25, 90C30, 90C60
  
\end{abstract}



\section{Introduction}\label{sec:intro}






This paper considers convex hybrid composite optimization (HCO) problem
\begin{equation}\label{eq:ProbIntro2}
	\phi_{*}:=\min \left\{\phi(x):=f(x)+h(x): x \in \R^n\right\},
	\end{equation}
where $f, h: \R^{n} \rightarrow \R\cup \{ +\infty \} $ are proper lower semi-continuous  convex functions such that
	$ \dom h \subseteq \dom f $ and the following conditions hold:
 there exist scalars $M_f\ge 0$ and $L_f \ge 0$ and a first-order oracle $f':\dom h \to \R^n$ (i.e., $f'(x)\in \partial f(x)$ for every $x \in \dom h$) satisfying the $(M_f,L_f)$-hybrid condition that
$\|f'(x)-f'(y)\| \le 2M_f + L_f \|x-y\|$ for every $x,y \in \dom h$.

The intrinsic convexity parameter
$\mu_\psi$
of a convex function $\psi$ is defined as the largest scalar $\mu$ such that $\psi(\cdot)-\mu\|\cdot\|^2/2$ is convex.
A method for solving \eqref{eq:ProbIntro2} is called parameter-free if it does not require knowledge of any parameter associated with the instance $(f,h)$, such as
   $(M_f,L_f)$, or the intrinsic convexity parameters $\mu_f$, $\mu_h$, and  $\mu_\phi$.
Parameter-free methods whose complexities are expressed in terms of
$(M_f,L_f)$ are called
universal methods.
Moreover,
parameter-free methods whose complexities are expressed in terms of
$(M_f,L_f, \mu_\phi)$, $(M_f,L_f, \mu_f)$, and $(M_f,L_f, \mu_h)$,
are referred to as
$\mu_\phi$-universal, $\mu_f$-universal, and $\mu_h$-universal,   respectively. 
It is worth noting that
$\mu_\phi$ can be substantially larger than
$\mu_f + \mu_h$
(e.g., for $\alpha\gg 0$, $f(x)=\alpha \exp(x)$, and $h(x)=\alpha \exp(-x)$, we have $\mu_\phi =2\alpha \gg 0 = \mu_f + \mu_h$).

The main goal of this paper is to develop  methods for solving \eqref{eq:ProbIntro2} 
  that: 
  \begin{itemize}
      \item[(1)]
  are $\mu_\phi$-universal for any instance $(f,h)$ of \eqref{eq:ProbIntro2} with arbitrary parameter pair $(M_f,L_f)$;
  \item[(2)] 
  for any given positive scalars $\bar M$ and $\bar \mu$, are optimal for the class of instances of \eqref{eq:ProbIntro2}
such that $L_f=0$,
$M_f \le \bar M$ and
$\mu_\phi \ge \bar \mu$.
\end{itemize}
It is worth noting that a $\mu_h$-universal
(resp., $\mu_f$-universal)   method that is optimal for
the class of instances of \eqref{eq:ProbIntro2}
such that $L_f=0$,
$M_f \le \bar M$ and
$\mu_h \ge \bar \mu$
(resp., $\mu_f \ge \bar \mu$)
is not necessarily optimal for the above class, as the first one (i.e., with $\mu_\phi \ge \bar \mu$) is larger than the latter one (i.e., with $\mu_h \ge \bar \mu$ or $\mu_f \ge \bar \mu$).

{\bf Related literature.}
We divide our discussion here into universal and $\mu$-universal methods.

{\it Universal methods:}
The first universal   methods for solving \eqref{eq:ProbIntro2} under the condition that $\nabla f$ is Hölder continuous 
have been presented in
\cite{nesterov2015universal}
and 
\cite{lan2015bundle}.
Specifically, the first paper 
develops universal   variants of the primal and dual gradient methods, and an optimal universal   accelerated gradient method, while the second one 
develops accelerated universal   variants of the bundle-level and the prox-level methods which achieve optimal complexity.
Additional universal   methods
for solving \eqref{eq:ProbIntro2} have been studied in \cite{liang2021average,liang2023average,mishchenko2020adaptive,zhou2024adabb} under the condition that $f$ is smooth,
and  in \cite{li2023simple,liang2020proximal,liang2023unified} for the case where $f$ is either smooth or nonsmooth.
The methods in \cite{liang2020proximal,liang2023unified} (resp., \cite{mishchenko2020adaptive,zhou2024adabb}) are also shown to be $\mu_h$-universal  
(resp., $\mu_f$-universal   under the condition that $h=0$).
The papers \cite{liang2021average,liang2023average} present
universal   accelerated composite gradients
methods for solving \eqref{eq:ProbIntro2} under the more general condition that $f$
is a smooth $m$-weakly convex function. Since any
convex function is $m$-weakly convex for any $m>0$, the results of \cite{liang2021average,liang2023average} also apply to the convex case and yield complexity bounds similar to the ones of \cite{mishchenko2020adaptive,zhou2024adabb}.

{\it $\mu_\phi$-Universal methods:}
Under the assumption that $f$ is a smooth function (i.e., $M_f=0$),
various works \cite{alamo2019gradient,alamo2022restart,alamo2019restart,aujol2023parameter,aujol2023fista,aujol2022fista,fercoq2019adaptive,lan2023optimal,nesterov2013gradient,renegar2022simple} have developed $\mu_\phi$-universal (or $\mu_f$-universal) methods   for \eqref{eq:ProbIntro2} 
with  $\tilde {\cal O}(\sqrt{L_f/\mu_\phi})$ (or $\tilde {\cal O}(\sqrt{L_f/\mu_f})$) iteration complexity bound.
For the sake of our discussion, we refer to a convex (resp.,
strongly convex) version of an accelerated gradient method as  ACG
(resp., S-ACG).
Among papers concerned with
finding an $\varepsilon$-solution of \eqref{eq:ProbIntro2}, 
\cite{nesterov2013gradient}
proposes the first $\mu_f$-universal   method
based on a restart S-ACG scheme
where each iteration adaptively updates an estimate of $\mu_f$
and calls S-FISTA
(see also \cite{fercoq2019adaptive} for a $\mu_\phi$-universal   variant along this venue);
moreover,
under the assumption that $\phi_*$ and $L_f$ are known, \cite{aujol2023fista}  
develops a $\mu_\phi$-universal   method that performs only one call to an ACG variant (for convex CO).

Finally, motivated by previous works such as \cite{ioudnest14,lan2013iteration, linxiao14,nemnest85,nesterov2013gradient,o2015adaptive,aspremontall2017},
$\mu_\phi$-universal   restart schemes, including accelerated ones,
were developed in
\cite{renegar2022simple} (see also related paper \cite{grimmer2023optimal})
for both when $\phi$ is smooth or nonsmooth.
Specifically, 
\cite{renegar2022simple}
develops
a $\mu_\phi$-universal   method under the assumption that $\phi_*$ is known; and,
also an alternative one for when $\phi_*$
is unknown\footnote{For this algorithm, it is assumed that the number of threads is ${\cal O}(\log \varepsilon^{-1})$ and, it is  shown that 
its complexity is better than the usual one by a logarithmic factor. 
However, if  a single thread is used,
Corollary 5 of \cite{renegar2022simple} implies that the  complexity for its nonsmooth approach (without smoothing) applied to
strongly convex instances of \eqref{eq:ProbIntro2}
reduces to the usual convex complexity (i.e., with $\mu_{\phi}=0$), and hence is not $\mu_{\phi}$-universal
  (see the last paragraph of Subsection \ref{subsec:U-CS} for more details about this method and its complexity).}, which solves the original problem
for multiple initial points and tolerances
in parallel.

{\bf Our contribution.}
We present
two  $\mu_\phi$-universal   methods for solving \eqref{eq:ProbIntro2} that fulfill the two requirements (1) and (2) above, namely:
a composite subgradient (U-CS) type method and a
proximal bundle (U-PB) type method.
In contrast to
\cite{grimmer2023optimal,renegar2022simple},
they are 
non-restart
methods
that do not require $\phi_*$ to be known.
Moreover,  
 U-CS and U-PB do not need the use of multiple threads
as  the parallel version of the restart subgradient
universal method of \cite{renegar2022simple} does.

U-CS is a variant of the universal   primal gradient method of \cite{nesterov2015universal}
(see also Appendix C.2 of \cite{liang2023unified}),
which  is  not known to be $\mu_\phi$-universal.
U-PB is a variant of the generic proximal bundle (GPB) method of \cite{liang2023unified} that bounds the number of consecutive null iterations
and adaptively chooses the prox stepsize under this policy.
Both methods are analyzed in a unified manner using a general framework for strongly convex optimization problems \eqref{eq:ProbIntro2} (referred to as FSCO)
which specifies minimal
conditions for its instances to be $\mu_\phi$-universal.
A functional complexity bound in terms of $\mu_\phi$ is established for any FSCO instance 
which is then used to derive complexities for both U-CS and U-PB.

{\bf Organization of the paper.}
    Subsection~\ref{subsec:DefNot}  presents basic definitions and notation used throughout the paper.
Section~\ref{sec:CS pf} presents two $\mu_\phi$-universal methods, namely U-CS and U-PB, for solving problem \eqref{eq:ProbIntro2} and establishes their corresponding complexity bounds.
Section~\ref{sec:FSCO} formally
describes FSCO, which is the framework that include U-CS and U-PB as special instances,
and provides the oracle complexity of FSCO.
Section \ref{sec:pf1} is devoted to the proofs of the main results for U-CS and U-PB.
Section~\ref{sec:conclusion} presents some concluding remarks and possible extensions.
Finally, Appendix~\ref{sec:appendix} provides technical results of FSCO and U-PB.

\subsection{Basic definitions and notation} \label{subsec:DefNot}
    
    Let $\R$ denote the set of real numbers.
    Let $ \R_+ $ (resp., $ \R_{++} $) denote the set of non-negative real numbers (resp., the set of positive real numbers).
	Let $\R^n$ denote the standard $n$-dimensional Euclidean space equipped with  inner product and norm denoted by $\left\langle \cdot,\cdot\right\rangle $
	and $\|\cdot\|$, respectively. 
	Let $\log(\cdot)$ denote the natural logarithm.
	
	For given $\Phi: \R^n\rightarrow (-\infty,+\infty]$, let $\dom \Phi:=\{x \in \R^n: \Phi (x) <\infty\}$ denote the effective domain of $\Phi$ 
	and $\Phi$ is proper if $\dom \Phi \ne \emptyset$.
	A proper function $\Phi: \R^n\rightarrow (-\infty,+\infty]$ is $\mu$-convex for some $\mu \ge 0$ if
	\[
	\Phi(\alpha x+(1-\alpha) y)\leq \alpha \Phi(x)+(1-\alpha)\Phi(y) - \frac{\alpha(1-\alpha) \mu}{2}\|x-y\|^2
	\]
	for every $x, y \in \dom \Phi$ and $\alpha \in [0,1]$.
Let $\mConv{n}$ denote the set of all proper lower semicontinuous $\mu$-convex functions.
 We simply denote $\mConv{n}$ by $\bConv{n}$ when $\mu=0$.
	For $\varepsilon \ge 0$, the \emph{$\varepsilon$-subdifferential} of $ \Phi $ at $x \in \dom \Phi$ is denoted by
	\[
        \partial_\varepsilon \Phi (x):=\left\{ s \in\R^n: \Phi(y)\geq \Phi(x)+\left\langle s,y-x\right\rangle -\varepsilon, \forall y\in\R^n\right\}.
        \]
	We denote the subdifferential of $\Phi$ at $x \in \dom \Phi$ by $\partial \Phi (x)$, which is the set  $\partial_0 \Phi(x)$ by definition.
    For a given subgradient
$\Phi'(x) \in \partial \Phi(x)$, we denote the linearization of convex function $\Phi$ at $x$ by $\ell_\Phi(\cdot,x)$, which is defined as
\begin{equation}\label{linfdef}
\ell_\Phi(\cdot,x)=\Phi(x)+\langle \Phi'(x),\cdot-x\rangle.
\end{equation}



\section{Universal composite subgradient and proximal bundle methods}\label{sec:CS pf}

This section presents two completely universal methods for solving \eqref{eq:ProbIntro2}, namely, U-CS in Subsection \ref{subsec:U-CS} and U-PB in Subsection \ref{sec:bundle}. Their iteration-complexity guarantees are described in this section but their proofs are postponed to Section \ref{sec:pf1},
after the presentation of
a general framework for (not necessarily composite) convex optimization problems in Section \ref{sec:FSCO}.
Viewing U-CS and U-PB as special cases of this framework will enable us to prove the main complexity results of this section in a unified manner in Section \ref{sec:pf1}.





We assume that 
\begin{itemize}
\item[(A1)] $f, h \in \bConv{n}$ are such that
		$\dom h \subset \dom f$, and a subgradient oracle,
		 i.e.,
		a function $f':\dom h \to \R^n$
		satisfying $f'(x) \in \partial f(x)$ for every $x \in \dom h$, is available;
		\item[(A2)]
		there exists $(M_f,L_f)\in \R^2_+$ such that for every $x,y \in \dom h$,
		\[
		\|f'(x)-f'(y)\| \le 2M_f + L_f \|x-y\|;
		\]
\item[(A3)]
		the set of optimal solutions $X_*$ of
		problem \eqref{eq:ProbIntro2} is nonempty.
\end{itemize}

It is known that that (A2) implies that for every $x,y \in \dom h$, 
\begin{equation}\label{ineq:est} 	
f(x)-\ell_f(x;y) \le 2M_f \|x-y\| + \frac{L_f}{2} \| x-y \|^2.
	\end{equation}
When $M_f=0$, (A2) implies that $f$ is $L_f$-smooth, and when $L_f=0$, (A2) implies that $f$ is $M_f$-Lipschitz continuous. Hence, our analysis applies to both smooth and nonsmooth, as well as hybrid, instances of \eqref{eq:ProbIntro2}, where both $L_f$ and $M_f$ are positive.

Recall that for a given function $\phi \in \bConv{n}$, the  intrinsic convex parameter
$\mu_\phi$
of $\phi$ is defined as the largest scalar $\mu$ such that $\phi(\cdot)-\mu\|\cdot\|^2/2$ is convex.
Clearly, $\mu_\phi \ge 0$ for any $\phi \in \bConv{n}$.
Even though, for the sake of completeness, the complexity results developed in this paper apply to any $\phi \in \bConv{n}$, our main interest in this paper is to develop $\mu_\phi$-universal methods for instances of \eqref{eq:ProbIntro2} such that
\begin{eqbox}
\begin{equation*}
\mu_\phi >0.
\end{equation*}
\end{eqbox}

Our goal in this section is to describe two completely universal methods for solving \eqref{eq:ProbIntro2}  and describe  their iteration-complexity guarantees for finding 
a $\bar \varepsilon$-solution $\bar x$ of \eqref{eq:ProbIntro2}, i.e., 
a point $\bar  x$ such that $\phi(\bar x) - \phi_* \le \bar \varepsilon$, in terms of
$(M_f,L_f,\mu_\phi )$, $\bar \varepsilon$, and $d_0$, where $d_0$
is the distance of the initial iterate $\hat x_0$ with respect to $X_*$, i.e.,
\begin{equation}\label{defd0}
d_0 := \min \{\|x-\hat x_0\| : x \in X_*\}.
\end{equation}

For any given positive scalars $\bar M$ and $\bar \mu$, we consider two classes of instances $(f,h)$ of problem \eqref{eq:ProbIntro2}, namely: 1) $L_f=0$, $M_f \le \bar M$, and $\mu_\phi \ge \bar \mu$; and 2) $L_f=0$, $M_f \le \bar M$ and $\mu_h \ge \bar \mu$. The following remarks about them hold:
i) the second 
class is a subset of the first one;
and, ii) 
the lower bound $\Omega(\bar M^2/(\bar \mu \bar \varepsilon))$ has been established for the second class in \cite{liang2020proximal} using an instance where $f$ is piece-wise linear and $h(x)=\bar \mu\|x\|^2/2$ (see (73) and (74) of \cite{liang2020proximal-arxiv}, which is an extended version of \cite{liang2020proximal}). 
These two observations imply  that $\Omega(\bar M^2/(\bar \mu \bar \varepsilon))$ is also a
lower bound for the first class.
Both the adaptive subgradient method and the proximal bundle method studied in Appendix C.2 and Section 3 of \cite{liang2023unified}, respectively, are shown to have the following properties:
i) they are  $\mu_h$-universal for any instance $(f,h)$ of \eqref{eq:ProbIntro2} with arbitrary $(M_f,L_f)$; and
ii) they achieve near optimal complexity $\tilde {\cal O}(\bar M^2/(\bar \mu \bar \varepsilon))$ for the second  class mentioned above.

\subsection{A universal composite subgradient method}\label{subsec:U-CS}

This subsection presents the first of two aforementioned universal methods for solving \eqref{eq:ProbIntro2}.
Specifically, it states a variant of the universal primal gradient method of \cite{nesterov2015universal} for solving \eqref{eq:ProbIntro2}, referred to as the U-CS method,
and establishes its complete universality.
Moreover, it is shown that its $\mu_\phi$-universality
does not require
 any line search on the intrinsic convex parameter $\mu_\phi$.

We start by stating U-CS.

	
	
	

\noindent\rule[0.5ex]{1\columnwidth}{1pt}
	
	U-CS($\hat x_0,\chi,\lam_0,\bar \varepsilon$)
	
	\noindent\rule[0.5ex]{1\columnwidth}{1pt}
	
    {\bf Input:} $(\hat x_0,\chi,\lam_0,\bar \varepsilon)\in \dom h \times [0,1) \times \R_{++} \times \R_{++}$.
        \begin{itemize}
		\item [0.]  Set $\lam=\lam_0$ and $k=1$;
	    \item[1.] Compute 
	    \begin{equation}\label{def:x}
	        x =\underset{u\in  \R^n}\argmin
		\left\lbrace  \ell_f(u;\hat x_{k-1})+h(u) +\frac{1}{2\lam} \|u- \hat x_{k-1} \|^2 \right\rbrace;
	    \end{equation}
        if $\phi(x) - \phi_* \le \bar \varepsilon$, then {\bf stop};
		\item[2.]  {\bf If} the inequality $$f(x)-\ell_f(x;\hat x_{k-1}) - \frac{1-\chi}{2\lam} \|x-\hat x_{k-1}\|^2 \le \frac{(1-\chi) \bar \varepsilon}2$$ does not hold, {\bf then} set $\lam = \lam/2$ and go to step~1; {\bf else}, 
        set $\lam_k=\lam$, $\hat x_k=x$,  $k\leftarrow k+1$, and go to step 1.
	\end{itemize}
	\rule[0.5ex]{1\columnwidth}{1pt}

We now make some remarks about U-CS. 
First, U-CS does not require any knowledge about the parameter pair $(M_f,L_f)$ or the intrinsic convex parameter $\mu_\phi$.
Second, the U-CS method with $\chi=0$ is exactly the universal primal gradient method
analyzed in \cite{nesterov2015universal}, which  establishes its universality  with
respect to the parameters of a Hölder condition on $f$ but not with respect to $\mu_h$.
The subsequent paper \cite{liang2023unified} establishes the $\mu_h$-universality  
of the U-CS method with $\chi=0$ (see Proposition C.2 of \cite{liang2023unified}), but left
its
$\mu_\phi$-universality as an open question.
Third, Theorem \ref{thoutercs} below shows that U-CS, with $\chi \in (0,1)$ and $\chi$ not too close to $1$, is $\mu_h$-universal,
and also  $\mu_\phi$-universal if in addition  $\chi$ is not close to $0$.
The latter extra condition on $\chi$ is because its 
iteration complexity bound, in terms of $\mu_\phi$ only, depends on
 $\chi^{-1}$, and hence does not apply
to the universal primal gradient method of \cite{nesterov2015universal} (i.e.,  U-CS with $\chi=0$).

We now make some comments about the universality of the U-CS method.
First, it performs a line search on $\lam$ on step 2 due to the fact that
$(M_f,L_f)$ is not assumed to be known.
Lemma \ref{lem:lam} below  shows that when  $(M_f,L_f)$ is known then a constant prox stepsize, which provably satisfies the inequality in step 2 for every $k$,
can be used throughout the method. Thus, the line search on step 2 is only performed so as to make the U-CS method $(M_f,L_f)$-parameter-free. 
Second, the $\mu$-universality of the U-CS method will be established in Theorem~\ref{thoutercs} 
regardless of whether
the line search is performed (i.e., when $(M_f,L_f)$ is not known) or is not needed (i.e., when $(M_f,L_f)$ is known).

\begin{lemma}\label{lem:lam}
If $\lam$ satisfies 
\begin{equation}\label{ineq:lam}
    \lam \le \frac{(1-\chi)^2 \bar \varepsilon}{ 4{M}_f^2 + \bar \varepsilon {L}_f},
\end{equation}
then the point $x=x_k(\lam)$ computed in \eqref{def:x} satisfies the inequality in step 2. As a consequence, if the initial prox stepsize $\lam_0$ is chosen to be the right hand side of \eqref{ineq:lam}, then $\lam$ remains constant throughout the method.
\end{lemma}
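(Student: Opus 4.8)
The plan is to verify directly that the stepsize bound \eqref{ineq:lam} forces the inequality in step 2 of U-CS. The key tool is the hybrid estimate \eqref{ineq:est}, which gives, for the point $x=x_k(\lam)$ of \eqref{def:x} and the previous iterate $\hat x_{k-1}$,
\[
f(x)-\ell_f(x;\hat x_{k-1}) \le 2M_f \|x-\hat x_{k-1}\| + \frac{L_f}{2}\|x-\hat x_{k-1}\|^2.
\]
Writing $t:=\|x-\hat x_{k-1}\|$, the inequality we must establish, namely $f(x)-\ell_f(x;\hat x_{k-1}) - (1-\chi)t^2/(2\lam) \le (1-\chi)\bar\varepsilon/2$, will follow if I can show
\[
2M_f t + \frac{L_f}{2}t^2 - \frac{1-\chi}{2\lam}t^2 \le \frac{1-\chi}{2}\bar\varepsilon \qquad \text{for all } t\ge 0.
\]
So the problem reduces to a one-dimensional quadratic estimate in $t$, with $\lam$ as a parameter.

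First I would bound the two ``bad'' terms $2M_f t$ and $\tfrac{L_f}{2}t^2$ separately against fractions of the available budget $\tfrac{1-\chi}{2\lam}t^2 + \tfrac{1-\chi}{2}\bar\varepsilon$. For the Lipschitz term, note that $\tfrac{L_f}{2}t^2 \le \tfrac{1-\chi}{4\lam}t^2$ whenever $\lam \le (1-\chi)/(2L_f)$, which is implied by \eqref{ineq:lam} after a routine check (since $4M_f^2+\bar\varepsilon L_f \ge \bar\varepsilon L_f$, so $\lam \le (1-\chi)^2\bar\varepsilon/(\bar\varepsilon L_f) \le (1-\chi)/(2L_f)$ once we also use $(1-\chi)\le 1$; I would double-check the constants here). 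For the Lipschitz-free cross term, I would use Young's inequality $2M_f t \le \tfrac{(1-\chi)}{4\lam}t^2 + \tfrac{4\lam M_f^2}{1-\chi}$, which transfers the linear term into a multiple of $t^2$ plus a constant. Adding these, the $t^2$ contributions total at most $\tfrac{1-\chi}{2\lam}t^2$, exactly the subtracted term, and the leftover constant is $\tfrac{4\lam M_f^2}{1-\chi}$, which must be $\le \tfrac{1-\chi}{2}\bar\varepsilon$; that is precisely $\lam \le (1-\chi)^2\bar\varepsilon/(8M_f^2)$, again implied by \eqref{ineq:lam} up to the denominator constant. The cleaner route is actually to split the budget as $\tfrac{1-\chi}{2\lam}t^2$ for the quadratic term and to absorb \emph{both} $2M_ft$ and $\tfrac{L_f}{2}t^2$ at once: from \eqref{ineq:lam}, $\tfrac{1}{2\lam} \ge \tfrac{4M_f^2+\bar\varepsilon L_f}{2(1-\chi)^2\bar\varepsilon}$, so $\tfrac{1-\chi}{2\lam}t^2 \ge \tfrac{2M_f^2 t^2}{(1-\chi)\bar\varepsilon} + \tfrac{L_f}{2(1-\chi)}t^2 \ge \tfrac{2M_f^2 t^2}{(1-\chi)\bar\varepsilon} + \tfrac{L_f}{2}t^2$, and then $2M_f t - \tfrac{2M_f^2 t^2}{(1-\chi)\bar\varepsilon} \le \tfrac{(1-\chi)\bar\varepsilon}{2}$ is just $-\tfrac{2}{(1-\chi)\bar\varepsilon}(M_f t - \tfrac{(1-\chi)\bar\varepsilon}{2})^2 \le 0$ rearranged, i.e. completing the square. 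This yields the step-2 inequality with no slack wasted.

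For the ``as a consequence'' part, I would argue that the line search in step 2 never decreases $\lam$ below its initial value when $\lam_0$ equals the right-hand side of \eqref{ineq:lam}: since $\lam_0$ satisfies \eqref{ineq:lam} with equality, the first part guarantees the step-2 test succeeds immediately at $\lam=\lam_0$ in iteration $k=1$, so $\lam_1=\lam_0$; inductively, entering iteration $k$ with $\lam=\lam_{k-1}=\lam_0$, the same bound applies (it depends only on $\lam$, $\chi$, $\bar\varepsilon$, and the instance parameters $M_f,L_f$, not on $k$), so the test succeeds and $\lam_k=\lam_0$. Hence $\lam$ is constant throughout. The main obstacle is essentially bookkeeping: making sure the constants line up so that \eqref{ineq:lam} is exactly what is needed for the completed-square bound, rather than a slightly weaker or stronger condition; the completing-the-square formulation above is designed to make \eqref{ineq:lam} tight, so I expect no real difficulty, only care with the algebra.
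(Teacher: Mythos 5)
Your proof is correct and uses essentially the same ingredients as the paper's: the hybrid estimate \eqref{ineq:est} followed by an elementary quadratic (completing-the-square) bound on the resulting one-variable expression in $t=\|x-\hat x_{k-1}\|$. The only difference is organizational — the paper first maximizes $2M_ft-\frac{1-\chi-\lam L_f}{2\lam}t^2$ over $t$ to get $\frac{2\lam M_f^2}{1-\chi-\lam L_f}$ and then invokes \eqref{ineq:lam}, whereas you invoke \eqref{ineq:lam} first to split the $\frac{1-\chi}{2\lam}t^2$ budget and then complete the square on the residual — but these are the same estimate done in a slightly different order.
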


\begin{proof}
    Assume that $\lam$ satisfies \eqref{ineq:lam}. Using \eqref{ineq:est} with $(M_f, L_f, u,v)=(M_f, L_f, x,\hat x_{k-1})$ and the inequality 
    $2M_f \Delta - a \Delta^2 \leq M_f^2/a$ with
    $\Delta=\|x-\hat x_{k-1}\|$ and $a=(1-\chi-\lam L_f)/2\lam$, we have
    \begin{align*}
    f(x)-\ell_f(x;\hat x_{k-1}) +\frac{\chi-1}{2 \lambda}  \|x-\hat x_{k-1}\|^2
    &\stackrel{\eqref{ineq:est}}\le 2M_f \|x-\hat x_{k-1}\| + \frac{L_f}2 \|x-\hat x_{k-1}\|^2+\frac{\chi-1}{2 \lambda  }\|x-\hat x_{k-1}\|^2 \\
    &= 2M_f \|x-\hat x_{k-1}\| - \frac{1-\chi-\lam L_f}{2\lam} \|x-\hat x_{k-1}\|^2 \\
    &\le \frac{2\lam M_f^2}{1-\chi-\lam L_f} \stackrel{\eqref{ineq:lam}}\le \frac{(1-\chi)\bar \varepsilon}2, 
    \end{align*}
    where the last inequality is due to
    \eqref{ineq:lam}.
    Hence, the lemma is proved.
\end{proof}

 The main result of this subsection,
whose proof is postponed until Subsection \ref{subsec:proof-CS}, is stated next.
Its goal is to establish the complete universality of U-CS.



\begin{theorem}\label{thoutercs}
U-CS terminates in at most
\begin{equation}\label{complprimal-2}
\min\left\{\min\left[ \frac{1}{\chi}\left(1+\frac{Q({\bar \varepsilon})}{\bar \varepsilon \mu_{\phi}}\right),  1+\frac{Q({\bar \varepsilon})}{\bar \varepsilon \mu_h}\right]
\log \left(1+\frac{\mu_\phi d_0^2}{\bar \varepsilon}\right)\, , \, \frac{d_0^2 Q({\bar \varepsilon})}{{\bar \varepsilon}^2} \right\}+
\displaystyle \left \lceil 
2 \log \frac{\lambda_0 Q({\bar \varepsilon})}{\bar \varepsilon} \right \rceil
\end{equation}
iterations and
\begin{equation}\label{defQf}
Q({\bar \varepsilon})= \frac{8 M_f^2}{(1-\chi)^2}+ {\bar \varepsilon} \left( \lam_0^{-1} + 
\frac{2L_f}{(1-\chi)^2} \right).
\end{equation}
\end{theorem}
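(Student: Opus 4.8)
The plan is to prove this theorem by viewing U-CS as a special instance of the general framework FSCO introduced in Section~\ref{sec:FSCO}, and then invoke the oracle complexity bound established there. Before doing that, however, one has to handle the line search on $\lambda$ separately from the counting of "productive" iterations (those that actually produce a new iterate $\hat x_k$). First I would separate the total iteration count into two parts: the number of productive iterations, and the number of extra iterations spent halving $\lambda$ in step~2. For the latter, Lemma~\ref{lem:lam} is the key tool: it guarantees that once $\lambda \le (1-\chi)^2\bar\varepsilon/(4M_f^2 + \bar\varepsilon L_f)$ the test in step~2 always succeeds, so $\lambda$ never drops below half of that threshold. Since $\lambda$ starts at $\lambda_0$ and is only ever divided by $2$, the number of halvings is at most $\lceil \log_2(\lambda_0 (4M_f^2 + \bar\varepsilon L_f)/((1-\chi)^2\bar\varepsilon)) \rceil$, which one then bounds (up to the factor $2$ inside the $\log$ and the definition of $Q(\bar\varepsilon)$) by $\lceil 2\log(\lambda_0 Q(\bar\varepsilon)/\bar\varepsilon)\rceil$; this accounts for the last additive term in \eqref{complprimal-2}.

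For the number of productive iterations, I would verify that U-CS satisfies the sufficient conditions laid out in the FSCO framework. Concretely, after each productive iteration the step-2 inequality $f(x) - \ell_f(x;\hat x_{k-1}) - (1-\chi)\|x-\hat x_{k-1}\|^2/(2\lambda_k) \le (1-\chi)\bar\varepsilon/2$ holds, and combined with the optimality condition for the prox subproblem \eqref{def:x} (which yields a descent-type inequality with the quadratic $\|\cdot-\hat x_{k-1}\|^2/(2\lambda_k)$ and the composite strong convexity of $h$, or alternatively of $\phi$) this should give a recursion of the form $\phi(\hat x_k) - \phi_* + (\text{distance to }X_*\text{ term}) \le (\text{contraction factor})\cdot(\text{previous iterate's analogous quantity}) + \bar\varepsilon\text{-order slack}$. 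The contraction factor is governed either by $\mu = \mu(\phi)$ (giving the $1/\chi$-type bound) or by $\nu = \mu(h)$ (giving the bound without the $1/\chi$ factor), and a lower bound on the prox stepsize of the order $\bar\varepsilon/Q(\bar\varepsilon)$ — which follows from the line-search analysis above — feeds into both. Summing/iterating this recursion and using that the method stops once $\phi(x)-\phi_* \le \bar\varepsilon$ produces the $\log(1 + \mu d_0^2/\bar\varepsilon)$ factor times the $\min[\cdots]$ bracket; in the purely convex regime ($\mu=0$) one instead telescopes additively to get the $d_0^2 Q(\bar\varepsilon)/\bar\varepsilon^2$ alternative, and taking the minimum of the two regimes gives the first term of \eqref{complprimal-2}.

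The main obstacle, I expect, is the dual role played by $\mu(\phi)$ versus $\mu(h)$: one must show that U-CS — which does \emph{not} know $\mu(\phi)$ and does \emph{not} perform any line search on it — nevertheless enjoys a contraction governed by $\mu(\phi)$, and this is precisely where the parameter $\chi$ enters and why the $\mu(\phi)$-bound carries the $1/\chi$ factor. The delicate point is extracting usable strong convexity of $\phi$ (rather than just of $h$) from the prox step: since the prox subproblem only explicitly contains $h$ and the \emph{linearization} of $f$, one has to use that the step-2 acceptance inequality controls the gap $f(x) - \ell_f(x;\hat x_{k-1})$ by a fraction $(1-\chi)$ of the quadratic term, leaving a residual $\chi/(2\lambda_k)$-multiple of $\|x-\hat x_{k-1}\|^2$ that can be combined with $\mu(f)\ge 0$ and $\mu(h)$ to recover a $\mu(\phi)$-proportional amount of curvature — this is the heart of why $\chi>0$ is needed and will require the most careful bookkeeping. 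The rest (the line-search count, the telescoping, the $\min$ over regimes) is routine once the recursion is in place and the FSCO machinery of Section~\ref{sec:FSCO} is applied.
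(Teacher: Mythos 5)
Your proposal is correct and follows essentially the same route as the paper: split the iteration count into line-search halvings (bounded via Lemma~\ref{lem:lam} by $\lceil 2\log(\lambda_0 Q(\bar\varepsilon)/\bar\varepsilon)\rceil$) and productive iterations, cast U-CS as an FSCO instance with $\hat\Gamma_k = \ell_f(\cdot;\hat x_{k-1})+h$ so that $\nu=\mu(h)$ and (F1) holds with $\underline\lambda \ge \bar\varepsilon/Q(\bar\varepsilon)$, then invoke Theorem~\ref{thouter}. One small caveat: your aside that $\mu(\phi)$-curvature is recovered by ``combining with $\mu(f)\ge 0$ and $\mu(h)$'' is not quite the mechanism used — the paper recovers $\mu(\phi)$ (which can exceed $\mu(f)+\mu(h)$) through the $\varepsilon$-subdifferential argument of Lemma~\ref{lem:zeta}/Lemma~\ref{lem:app}, where $\chi$ tunes the trade-off between the curvature coefficient $\zeta\mu$ and the $(1-\zeta)^{-1}$ error blow-up — but since you defer to the FSCO machinery this does not create a gap.
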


We now make some remarks about Theorem \ref{thoutercs}.
First,
if 
$\lambda_0^{-1}={\cal O}(L_f)$ and
$\chi \in [0,1)$ is such that 
$(1-\chi)^{-1} = {\cal O}(1)$, then
$Q(\bar{\varepsilon})/{\bar \varepsilon} 
=\tilde {\cal O}\left(M_f^2/\bar \varepsilon + L_f\right)$, 
and hence 
\eqref{complprimal-2} in terms of $\mu_h$ is  
\begin{equation}\label{compU-PB-a}
    \tilde {\cal O}\left(\frac{M_f^2}{{\bar \varepsilon} \mu_h} + \frac{L_f}{\mu_h}
\right),
\end{equation}
which is the same well-known bound established for the U-CS method with $\chi=0$ (e.g., see Proposition C.2 of \cite{liang2023unified}).

Second, if in addition to the assumptions on the pair $(\chi,\lam_0)$ made in the first remark,
the parameter $\chi$ is
also assumed to be
not close to zero
 (i.e., $ \chi^{-1} = {\cal O}(1)$), then the complexity bound \eqref{complprimal-2} is also 
\begin{equation}\label{compU-PB}
    \tilde {\cal O} \left( \frac{Q(\bar{\varepsilon})}{\chi \bar \varepsilon \mu_\phi}\right)
=\tilde {\cal O}\left(\frac{M_f^2}{ {\bar \varepsilon} \mu_\phi} + \frac{L_f}{\mu_\phi} \right),
\end{equation}
which is considerably smaller than \eqref{compU-PB-a} when $ \mu_\phi \gg \mu_h$.

Third, in view of the lower complexity remarks made in the paragraph following \eqref{defd0}, bound \eqref{compU-PB} with $(\mu_\phi,M_f)$ replaced by $(\bar \mu,\bar M)$ is optimal, up to logarithmic terms, for the  class of instances $(f,h)$ of problem \eqref{eq:ProbIntro2} where $L_f=0$, $M_f \le \bar M$, and $\mu_\phi \ge \bar \mu$.

Fourth, algorithms with optimal complexity for
the class of instances $(f,h)$ such that
$L_f\le \bar L$, $M_f \le \bar M$, and $\mu_\phi \ge \bar \mu$
have been studied in \cite{lan2015bundle,nesterov2015universal} for the case where $\bar \mu=0$, and in \cite{alamo2019gradient,alamo2022restart,alamo2019restart,aujol2023parameter,aujol2023fista,aujol2022fista,fercoq2019adaptive,lan2023optimal,nesterov2013gradient,renegar2022simple} for the case where
$\bar M=0$ and $\bar \mu>0$.


Finally, we end this subsection with a comparison of the complexity bound of Theorem \ref{thoutercs} with the
one derived in Corollary 5 of \cite{renegar2022simple} for its subgradient variant under the assumption that $f$ has quadratic growth. 
Specifically,
under the assumption that $M_f$ is known and $L_f=0$, Corollary 5 of \cite{renegar2022simple} shows that their  subgradient variant has (parallel) iteration complexity bound
\begin{equation}\label{bound:Ben}
    {\cal O}\left(\frac{M_f^2}{\bar\varepsilon \mu_f} + N + \left(\frac{M_f d_0}{2^N \bar \varepsilon}\right)^2\right)
\end{equation}
where $N$ is the number of threads. As a consequence, they conclude that the above bound reduces to
\[
{\cal O}\left(\frac{M_f^2}{\bar\varepsilon  \mu_f } + \log \frac1{\bar \varepsilon} + M_f^2d_0^2\right)
\]
when $N={\cal O}(\log \bar \varepsilon^{-1})$.
For sufficiently small $\bar \varepsilon>0$, the above bound and Theorem 3.2.1 of \cite{nesterov2018lectures} show that their subgradient variant is exactly optimal for the class of instances $(f,h)$ of problem \eqref{eq:ProbIntro2} where $L_f=0$, $M_f \le \bar M$, and $\mu_f \ge \bar \mu$.
However,  bound \eqref{bound:Ben} with $N=1$ reduces to
${\cal O}(M_f^2 d_0^2/\bar \varepsilon^2)$, and hence does not imply that their single-thread subgradient variant
is optimal
for the aforementioned class.

\if{
Complexity for stationary conditions applies to U-CS using U-CS parameters. We obtain the following theorem
on complexity of stationarity conditions for U-CS:

\begin{theorem}\label{thm:main1cs} 
Let $\bar x_k$, $\bar s_k$, $\bar \varepsilon_k$
given by \eqref{xkcs} and \eqref{def:sk-ek-12bcs}.
For a given tolerance pair $(\hat \varepsilon,\hat \rho) \in \R^2_{++}$, U-CS with 
\[
    \chi \in (0,1), \quad \varepsilon=\frac{\chi (1-\chi)\hat \varepsilon}{14},
\]
generates a triple $(\bar x_k,\bar s_k,\bar \varepsilon_k)$ satisfying 
\[
\bar s_k \in \partial \phi_{\bar \varepsilon_k}(\bar x_k), \quad \|\bar s_k\| \leq \hat \rho, \quad \bar \varepsilon_k \le \hat \varepsilon
\]
in at most
\begin{equation}\label{bound:total-1}
\min\left\{\frac{(1+\nu \underline{\lam})(1+\mu \underline{\lam})}{\underline{\lam}[\nu(1+ \mu \underline{\lam})+\chi(\mu-\nu)]} \log \left(1+ \sigma \beta(\hat \varepsilon,\hat \rho)
\right) \, , \, \beta(\hat \varepsilon,\hat \rho)\right\}+\displaystyle \left \lceil 
2 \log\left(\frac{2 \lambda_0 T_{\varepsilon}^2}{(1-\chi)\varepsilon} \right) \right \rceil
\end{equation}
iterations, where
\[
\beta(\hat \varepsilon,\hat \rho)=\frac{4d_0}{{\underline \lambda} \hat \rho} + \frac{4\chi \hat \varepsilon}{7{\underline \lambda} \hat \rho^2} + \frac{2(2+\chi)d_0^2}{\chi {\underline \lambda} \hat \varepsilon},
         \quad \sigma = \frac{\underline{\lam}[\nu(1+\mu \underline{\lam})+\chi(\mu-\nu)]}{1+\mu {\underline \lambda}+\chi {\underline \lambda}(\nu-\mu)}
\]    
where $\underline \lambda$ is given in \eqref{ineq:lamj2}.
\end{theorem}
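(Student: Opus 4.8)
The plan is to read an approximate-stationarity certificate directly off the prox subproblem \eqref{def:x} solved at each serious iteration of U-CS, and then to bound the number of serious iterations by the same two-regime (convex / strongly convex) telescoping argument that underlies Theorem~\ref{thoutercs}, now applied to the stationarity residual instead of to the function gap. Since U-CS is an instance of FSCO (Section~\ref{sec:FSCO}), the recursions needed are exactly the ones used there; the new work is to translate the resulting energy bounds into bounds on $\|\bar s_k\|$ and $\bar\varepsilon_k$.

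First I would exploit the optimality condition of \eqref{def:x}: if $\hat x_k$ is accepted at iteration $k$ with stepsize $\lam_k$, then there is $h'(\hat x_k)\in\partial h(\hat x_k)$ with $\frac1{\lam_k}(\hat x_{k-1}-\hat x_k)=f'(\hat x_{k-1})+h'(\hat x_k)$. Because $f'(\hat x_{k-1})\in\partial_{\eta_k}f(\hat x_k)$ with $\eta_k:=f(\hat x_k)-\ell_f(\hat x_k;\hat x_{k-1})\ge 0$, the sum rule for $\varepsilon$-subdifferentials gives
$$\bar s_k:=\frac1{\lam_k}(\hat x_{k-1}-\hat x_k)\in\partial_{\bar\varepsilon_k}\phi(\bar x_k),\qquad \bar x_k:=\hat x_k,\quad \bar\varepsilon_k:=\eta_k,$$
which is precisely the triple \eqref{xkcs}--\eqref{def:sk-ek-12bcs}. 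The step-2 acceptance test then yields $\bar\varepsilon_k\le\tfrac{1-\chi}{2}\big(\lam_k\|\bar s_k\|^2+\varepsilon\big)$, while $\|\bar s_k\|^2=\|\hat x_k-\hat x_{k-1}\|^2/\lam_k^2$. Hence both quality measures are governed by the single quantity $E_k:=\|\hat x_k-\hat x_{k-1}\|^2/\lam_k=\lam_k\|\bar s_k\|^2$ together with the fixed slack $\varepsilon$: using $\underline\lam\le\lam_k\le\lam_0$ (Lemma~\ref{lem:lam} and \eqref{ineq:lamj2}) one gets $\|\bar s_k\|^2\le E_k/\underline\lam$ and $\bar\varepsilon_k\le\tfrac{1-\chi}{2}(\lam_0\|\bar s_k\|^2+\varepsilon)$, and choosing $\varepsilon=\chi(1-\chi)\hat\varepsilon/14$ makes the $\varepsilon$-contribution to $\bar\varepsilon_k$ a controlled fraction of $\hat\varepsilon$.

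Next I would establish the Lyapunov recursion already behind Theorem~\ref{thoutercs}, but retain the $E_j$-terms rather than discard them: from the three-point inequality for the prox step, the $\nu$-convexity of $h$, the $\mu$-convexity of $\phi$, and the step-2 bound on $f(\hat x_j)-\ell_f(\hat x_j;\hat x_{j-1})$, one obtains for every $x_*\in X_*$ a recursion of the form $(1+\nu\lam_j)\|\hat x_j-x_*\|^2+c\,E_j\le(1-\theta_j)\|\hat x_{j-1}-x_*\|^2+c'\lam_j\varepsilon$, whose net contraction factor, after substituting $\lam_j\ge\underline\lam$, equals the reciprocal of
$$\frac{(1+\nu\underline\lam)(1+\mu\underline\lam)}{\underline\lam\,[\nu(1+\mu\underline\lam)+\chi(\mu-\nu)]},$$
the term $\chi(\mu-\nu)$ coming from the $(1-\chi)$-fraction of the curvature of $\phi$ that step~2 leaves unused and can therefore be billed against the gap. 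Telescoping this recursion in the two standard ways gives: (i) discarding the contraction, $\min_{j\le N}\big(\lam_j\|\bar s_j\|^2+\bar\varepsilon_j\big)\le \mathrm{const}\cdot d_0^2/(N\underline\lam)+\mathrm{const}\cdot\varepsilon/\chi$, which, forced below the two thresholds $\|\bar s_j\|\le\hat\rho$ and $\bar\varepsilon_j\le\hat\varepsilon$ via $\|\bar s_j\|^2\le E_j/\underline\lam$, produces exactly the three terms of $\beta(\hat\varepsilon,\hat\rho)$ (the $d_0^2/(\chi\underline\lam\hat\varepsilon)$ term from the function-error threshold, and the $\hat\varepsilon/(\underline\lam\hat\rho^2)$ and $d_0/(\underline\lam\hat\rho)$ terms from the subgradient threshold, after one application of Cauchy--Schwarz); and (ii) keeping the contraction, $\min_{j\le N}\big(\lam_j\|\bar s_j\|^2+\bar\varepsilon_j\big)\le \sigma\,\rho^{N}V_0+(\mathrm{error})$ with $\rho,\sigma$ the displayed quantities, so that taking logarithms yields the bound $\tfrac1{1-\rho}\log(1+\sigma\beta(\hat\varepsilon,\hat\rho))$. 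Taking the minimum of (i) and (ii) is legitimate since both hold.

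Finally, the line search is accounted for exactly as for Theorem~\ref{thoutercs}: $\lam$ is only ever halved, never increased, and every accepted $\lam_k$ satisfies $\lam_k\ge\underline\lam$ by Lemma~\ref{lem:lam}, so the total number of failed step-2 evaluations over the whole run is at most $\lceil\log_2(\lam_0/\underline\lam)\rceil=\lceil 2\log(2\lam_0 T_\varepsilon^2/((1-\chi)\varepsilon))\rceil$; adding this to the serious-step count gives \eqref{bound:total-1}. The main obstacle will be item~(ii): producing the geometric rate with the \emph{exact} coefficient above requires carefully partitioning the curvature of $\phi$ between the part of the prox subproblem that already exploits $h$'s curvature $\nu$ and the $\chi$-fraction left over from the acceptance test, while simultaneously checking that the per-step $O(\lam_j\varepsilon)$ slack — unavoidable since $\phi_*$ is unknown and step~2 enforces only an approximate decrease — is dominated by the contraction so that the iterated recursion still reaches the prescribed $(\hat\varepsilon,\hat\rho)$ level geometrically; verifying that the numerical constant $14$ in the choice of $\varepsilon$ suffices is part of this bookkeeping.
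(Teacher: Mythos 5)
Before assessing the argument itself, note a structural problem: Theorem \ref{thm:main1cs} sits in a block of the source that is excluded from compilation (an \texttt{if/fi} guard), the objects it invokes --- the equations labelled (xkcs) and (def:sk-ek-12bcs) defining $(\bar x_k,\bar s_k,\bar\varepsilon_k)$, and the quantity $T_\varepsilon$ in the line-search term --- are nowhere defined in the paper, and the paper contains no proof of this statement. So there is no ``paper proof'' to compare against, and your choice $\bar x_k=\hat x_k$, $\bar s_k=(\hat x_{k-1}-\hat x_k)/\lambda_k$, $\bar\varepsilon_k=\eta_k:=f(\hat x_k)-\ell_f(\hat x_k;\hat x_{k-1})$ is a guess; the inclusion $\bar s_k\in\partial_{\bar\varepsilon_k}\phi(\bar x_k)$ and the step-2 bound $\eta_k\le\tfrac{1-\chi}{2}(\lambda_k\|\bar s_k\|^2+\varepsilon)$ are indeed correct for that guess, but in this line of work the certificate's $\bar\varepsilon_k$ typically also carries the slack and an inner-product term, and the admissible constant ($14$) depends on which definition is used, so this must be pinned down before any bookkeeping can begin.

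Assessed on its own terms, your proposal is a plausible plan (certificate from the prox optimality condition, FSCO-type recursion retaining the displacement terms, two-regime telescoping, line-search accounting as in Theorem \ref{thoutercs}), but it stops short of a proof exactly where the theorem has content. Concretely: (1) your regime (i), as described, cannot yield the term $4d_0/(\underline{\lambda}\hat\rho)$ of $\beta(\hat\varepsilon,\hat\rho)$ --- from $\min_{j\le N}\lambda_j\|\bar s_j\|^2\le C\,d_0^2/(N\underline{\lambda})+C'\varepsilon/\chi$, forcing $\|\bar s_j\|\le\hat\rho$ gives $N\gtrsim d_0^2/(\underline{\lambda}^2\hat\rho^2)$, a term quadratic in $d_0/(\underline{\lambda}\hat\rho)$ that does not appear in $\beta$; the linear term requires a first-power argument (e.g.\ a telescoped bound pairing $\|\bar s_j\|$ with the distance to $x_*$ via the $\varepsilon$-subgradient inequality), and ``one application of Cauchy--Schwarz'' is not a derivation of the three displayed terms with their constants $4$, $4/7$, $2(2+\chi)/\chi$. (2) The pieces you yourself flag as the main obstacle --- the recursion with unspecified constants $c,c',\theta_j$, the geometric regime with the exact coefficient $\sigma$ and prefactor $(1+\nu\underline{\lambda})(1+\mu\underline{\lambda})/(\underline{\lambda}[\nu(1+\mu\underline{\lambda})+\chi(\mu-\nu)])$, and the verification that $\varepsilon=\chi(1-\chi)\hat\varepsilon/14$ simultaneously delivers $\|\bar s_k\|\le\hat\rho$ and $\bar\varepsilon_k\le\hat\varepsilon$ within the stated count --- are precisely the quantitative assertions of the theorem; deferring them as ``bookkeeping'' leaves the statement unproved. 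A complete argument would also have to state and justify the identification of $T_\varepsilon$ in the line-search term (presumably $T_\varepsilon^2\sim M_f^2+\varepsilon L_f$, matching \eqref{ineq:lamj2}), which the proposal does not address.
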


}\fi

\subsection{A universal proximal bundle method}\label{sec:bundle}

This subsection describes the U-PB method and  states a result that ensures its complete universality. 

Conditions (A1)-(A3) are assumed to hold in this subsection. We also assume 
\begin{itemize}
\item[(A4)] the diameter of the domain of $h$ given by $\max\{\|x-y\|:x,y \in \dom h\}$ is bounded by 
$D<+\infty$.
\end{itemize}


The U-PB method is an extension of the GPB method of \cite{liang2023unified}.
In contrast to GPB, we use an adaptive stepsize and introduce a maximal number $\overline N$ (which can be as small as 2) of iterations for all cycles.
Similarly to U-CS, U-PB is another instance of FSCO and we establish functional complexity for U-PB using the results of Section \ref{sec:FSCO}.
Compared with the complexity results in \cite{liang2023unified}, those obtained in this paper are sharper, since they are expressed in terms of $\mu_\phi$ instead of $\mu_h$.


U-PB is based on the following bundle update (BU) blackbox which builds a model
$m_f^{+}+h$
for $f+h$ on the basis of a previous model $m_f$ of $f$ and of a new linearization
$\ell_{f}(\cdot,x)$
of $f$. This blackbox BU$(x^c,x, m_f,\lambda)$ is given below
and takes as inputs a prox-center $x^c$,
a current approximate solution $x$, 
an initial model $m_f$ for $f$, and a stepsize $\lambda>0$.

    \noindent\rule[0.5ex]{1\columnwidth}{1pt}
	
	BU$(x^c,x, m_f,\lambda)$

    \noindent\rule[0.5ex]{1\columnwidth}{1pt}
    {\bf Inputs:} $\lam \in \R_{++}$ and
    $(x^c,x,m_f(\cdot)) \in \R^n \times
    \R^n \times {\overline{\mbox{Conv}}}(\mathbb{R}^n)$ such that $m_f(\cdot) \le f(\cdot)$ and
$$
x = \underset{u\in \R^n}\argmin \left\{ m_f(u)+h(u) + \frac{1}{2\lam} \|u-x^c\|^2 \right\}.
$$    
 Find function $m_f^{+}(\cdot) \in  {\overline{\mbox{Conv}}}(\mathbb{R}^n)$ such that
\begin{equation}\label{def:Gamma}
        \max\{{\overline m_f(\cdot)}, \ell_f(\cdot;x)\} \le m_f^{+}(\cdot) \le f(\cdot),
	\end{equation}
   where $\ell_f(\cdot;\cdot)$ is as in \eqref{linfdef} and
   $\overline m_f(\cdot) $ is such that
\begin{equation}\label{def:bar Gamma} 
\overline m_f(\cdot) \leq f(\cdot), \quad
 \overline m_f(x) = m_f(x), \quad 
	x = \underset{u\in \R^n}\argmin \left \{\overline m_f(u) + h(u)+\frac{1}{2\lam} \|u-x^c\|^2 \right\}.\\
	\end{equation}
{\bf Output:} $m_f^+$.\\     
    \noindent\rule[0.5ex]{1\columnwidth}{1pt}

In the following, we give two examples of BU, namely two-cuts and multiple-cuts schemes. The proofs for the two schemes belonging to BU can be provided similarly to Appendix D of \cite{liang2023unified}.
    
    \begin{itemize} 
        \item [(E1)] \textbf{two-cuts scheme:}
        We assume that $m_f$ is of the form $m_f=\max \{A_f,\ell_f(\cdot;x^-)\}$
        where $A_f$ is an affine function satisfying $A_f\le f$. 
        The scheme then sets
        $A_f^+(\cdot) :=  \theta A_f(\cdot) + (1-\theta) \ell_f(\cdot;x^-)$
     and updates $m_f^{+}$ as $m_f^{+}(\cdot)  := \max\{A_f^+(\cdot),\ell_f(\cdot;x)\}$,
     where $\theta \in [0,1]$ satisfies
        \begin{align*}
            &\frac1\lam (x-x^c) + \partial h(x) 
+ \theta \nabla A_f + (1-\theta) f'(x^-) \ni 0, \\
            &\theta A_f(x) + (1-\theta) \ell_f(x;x^-) = \max \{A_f(x),\ell_f(x;x^-)\}.
        \end{align*}
     
	    \item [(E2)] \textbf{multiple-cuts scheme:} We assume that $m_f$ has the form
	    $m_f=m_f(\cdot;B)$ where
	    $B \subset \R^n$ is the current bundle set and $m_f(\cdot;B)$ is defined as 
     $m_f(\cdot;B) := \max \{ \ell_f(\cdot;b) : b \in B \}$.
	     This scheme selects the next bundle set $B^+$ so that   
        $B(x) \cup \{x\} \subset B^+ \subset B \cup \{x\}$
        where $ B(x) := \{ b \in B : \ell_f(x;b) = m_f(x) \}$,
         and then outputs $m_f^+ = m_f(\cdot;B^+)$.
	\end{itemize}

Before giving the motivation of U-PB, we briefly review the GPB method of \cite{liang2023unified}.
GPB is an inexact proximal point method (PPM, with fixed stepsize) in that,
given a prox-center $\hat x_{k-1} \in \R^n$ and a prox stepsize $\lam >0$, it computes the next prox-center $\hat x_k$ as a suitable approximate solution of 
the prox subproblem
\begin{equation}\label{eq:key-prox}
   \hat x_k \approx 
  \underset{u\in \R^n}\argmin \left\{ (f+h)(u)+\frac1{2\lam}\|u-\hat x_{k-1}\|^2 \right\}.
\end{equation}
More specifically,  a sequence of prox bundle subproblems of the form
\begin{equation}\label{def:xj}
	    x_{j} =\underset{u\in \R^n}\argmin
	    \left\lbrace  (f_j+h)(u) +\frac{1}{2\lam}\|u- \hat x_{k-1} \|^2 \right\rbrace,  
	    \end{equation}
where 
$f_j \le f$ is a bundle
approximation of $f$,
is solved
until for the first time an iterate $x_j$ as in \eqref{def:xj} approximately solves
\eqref{eq:key-prox}, and such $x_j$ is then set to be $\hat x_k$.
The bundle approximation $f_j$
is sequentially updated, for example, according to either one of the schemes (E1) and (E2) described above.

 U-PB is also an inexact PPM
but with variable prox stepsizes
(i.e., with $\lam$ in \eqref{eq:key-prox} replaced by $\lam_k$)
instead of a constant one as in GPB.
Given iteration upper limit $\overline N \ge 1$ and prox-center $\hat x_{k-1}$,
it adaptively computes $\lam_k>0$ as follows:
starting with $\lam=\lam_{k-1}$, it solves at most $\overline N$ prox subproblems of the form \eqref{def:xj} in an attempt to obtain an approximate solution of \eqref{eq:key-prox} and, if it fails,
repeats this procedure with $\lam$ divided by $2$;
otherwise, it sets $\lam_k$ to be the first successful $\lam$ and $\hat x_{k}$ as described in the previous paragraph.



We are now ready to state U-PB.


\noindent\rule[0.5ex]{1\columnwidth}{1pt}
	
	U-PB($\hat x_0,\chi,\lam_0,\bar \varepsilon,\overline N$)
	
	\noindent\rule[0.5ex]{1\columnwidth}{1pt}
      \par {\textbf{Input:}} $(\hat x_0, \lambda_0, \chi, \bar \varepsilon, \overline N)  \in 
      \dom h \times \mathbb{R}_{++} \times [0,1) \times \mathbb{R}_{++} \times \mathbb{N}_{++}$.
	\begin{itemize}
		\item [0.] Set $\lambda=\lambda_0$, 
	$y_0=\hat x_0$, $N=0$, $j=1$, and
  $k =1$. Find
  $f_1 \in \overline{\mbox{Conv}}(\mathbb{R}^n)$ such that 
  $\ell_f(\cdot;{\hat x}_0) \le f_1(\cdot) \leq f(\cdot)$;
	    \item[1.]
	    Compute $x_j$ as in \eqref{def:xj}; if $\phi(x_j)-\phi_* \leq \bar \varepsilon$ stop;
            \item[2.] Compute

\begin{equation}\label{def:txj2}
         \bar \phi_j = \left\{ \begin{array}{ll} \displaystyle \phi(x_j) +
		\frac{\chi}{2\lam} \|x_j-\hat x_{k-1}\|^2, & \mbox{  if } N=0,  \\
        \displaystyle  \min\left\{ \bar \phi_{j-1}\,,\,\phi(x_j) +
		\frac{\chi}{2\lam} \|x_j-\hat x_{k-1}\|^2   \right\}, & \mbox{ otherwise,}
         \end{array}
         \right.
         \end{equation}

 set  $N=N+1$ and
		\begin{equation}\label{ineq:hpe1}
		    t_{j} = \bar \phi_j - \left((f_j+h)(x_{j}) +\frac{1}{2\lam}\|x_{j} - \hat x_{k-1} \|^2\right);
		\end{equation}

		\item[3.] {\textbf{If}} $t_j>(1-\chi){\bar \varepsilon}/2$ and $N < \overline N$ {\textbf{then}}\\ 
         3.a.$\hspace*{0.8cm}$perform a {\textbf{null update}, i.e.:}
         set  $f_{j+1}=\mbox{BU}(\hat x_{k-1},x_j,f_j,\lambda)$;\\
        {\textbf{else}}\\
          $\hspace*{0.8cm}${\textbf{if}} $t_j >(1-\chi){\bar \varepsilon}/2$ and $N = \overline N$\\
    3.b.$\hspace*{1.2cm}$perform a {\textbf{reset update}, i.e.,}
   set $\lambda \leftarrow \lambda/2$;\\
   $\hspace*{0.8cm}${\textbf{else}} (i.e., $t_j \le (1-\chi){\bar \varepsilon}/2$ and $N \le \overline N$) \\
         	 3.c.$\hspace*{1.2cm}$perform a {\bf serious update}, i.e., set $\hat x_k = x_{j}$, $\hat \Gamma_k=f_j+h$, $\hat y_k=y_j$, $\lambda_k=\lambda$;\\
             $\hspace*{1.7cm}$$k \leftarrow k+1$;\\
          $\hspace*{0.8cm}${\textbf{end if}}\\ 
     3.d.$\hspace*{0.2cm}$set $N=0$ and find $f_{j+1}\in \overline{\mbox{Conv}}(\mathbb{R}^n)$ such that
			$\ell_f(\cdot;{\hat x}_{k-1}) \le f_{j+1}\le f$;

	{\textbf{end if}}\\
         	\item[4.] Set  $j\leftarrow j+1$ and go to step 1.
	\end{itemize}
	\rule[0.5ex]{1\columnwidth}{1pt}

\if{

\noindent\rule[0.5ex]{1\columnwidth}{1pt}
	
	U-PB
	
	\noindent\rule[0.5ex]{1\columnwidth}{1pt}
	\begin{itemize}
		\item [0.] Let $ \hat x_0\in \dom h $, $\lambda_1=\lambda>0$, $\chi \in [0,1)$, $\varepsilon>0$, and integer $\overline N \geq 1$  be given, and set  
	$y_0=\hat x_0$, $N=0$, $j=1$, and
  $k =1$. Find
  $f_1 \in \overline{\mbox{Conv}}(\mathbb{R}^n)$ such that 
  $\ell_f(\cdot;{\hat x}_0) \le f_1 \leq f$;
	    \item[1.]
	    Compute $x_j$ as in \eqref{def:xj};
   
         \item[2.] Choose
		$y_{j} \in \{ x_{j}, y_{j-1}\}$ such that
		\begin{equation}\label{def:txj}
	\phi(y_j) +
		\frac{\chi}{2\lam} \|y_j-\hat x_{k-1}\|^2 
  = \min \left \lbrace \phi(x_j) +
		\frac{\chi}{2\lam} \|x_j-\hat x_{k-1}\|^2, \phi(y_{j-1}) +
		\frac{\chi}{2\lam} \|y_{j-1}-\hat x_{k-1}\|^2 \right\rbrace,
		\end{equation}
		and set  $N=N+1$ and
		\begin{equation}\label{ineq:hpe1}
		    t_{j} = \phi(y_j) +
		\frac{\chi}{2\lam} \|y_j-\hat x_{k-1}\|^2 - \left((f_j+h)(x_{j}) +\frac{1}{2\lam}\|x_{j} - \hat x_{k-1} \|^2\right);
		\end{equation}

		\item[3.] {\textbf{If}} $t_j>\varepsilon$ and $N < \overline N$ {\textbf{then}}\\  
         $\hspace*{0.8cm}$perform a {\textbf{null update}, i.e.:}
         set  $f_{j+1}=\mbox{BU}(\hat x_{k-1},x_j,f_j,\lambda)$;\\
        {\textbf{else}}\\
          $\hspace*{0.8cm}${\textbf{if}} $t_j\leq \varepsilon$\\
     	 $\hspace*{1.2cm}$perform a {\bf serious update}, i.e., set $\hat x_k = x_{j}$, $\hat \Gamma_k=f_j+h$, $\hat y_k=y_j$, $\lambda_k=\lambda$, and go  to Step 5;\\
   $\hspace*{0.8cm}${\textbf{else}} {\bf  (i.e., $t_j > \varepsilon$ and $N = \overline N$)} \\
              $\hspace*{1.2cm}$perform a {\textbf{reset update}, i.e.,}
   set $\lambda \leftarrow \lambda/2$;\\
          $\hspace*{0.8cm}${\textbf{end if}}\\ 
     $\hspace*{0.8cm}$set $N=0$ and find $f_{j+1}\in \overline{\mbox{Conv}}(\mathbb{R}^n)$ such that
			$\ell_f(\cdot;{\hat x}_{k-1}) \le f_{j+1}\le f$;

	{\textbf{end if}}\\
         	\item[4.] Set  $j\leftarrow j+1$ and go to step 1.
	\end{itemize}
	\rule[0.5ex]{1\columnwidth}{1pt}
}\fi

We now give further explanation about U-PB.
U-PB performs three types of iterations, namely, null, reset, and 
serious, corresponding to 
the types of updates performed at the end.
A reset (resp., serious) cycle of U-PB consists of a reset
(resp., serious) iteration and all the consecutive null iterations preceding it.
The index $j$ counts the total number of iterations including null, reset, and serious ones.
The index $k$ counts the serious cycles which, together with the quantities $\hat x_k$, $\hat y_k$, and $\hat \Gamma_k$ computed at the end of cycle $k$, is used to cast U-PB as an instance of FSCO. All iterations within a cycle are referred to as inner iterations. The quantity $N$ counts the number of inner iterations performed in the current cycle.
Each cycle of U-PB performs at most $\overline N$ iterations.
A serious cycle successfully finds $t_j\le (1-\chi)\bar \varepsilon/2$ within $\overline N$ iterations, while a reset cycle fails to do so. In both cases, U-PB resets the counter $N$ to $0$ and starts a new cycle.
The differences between the two cases are: 1) the stepsize $\lam$ is halved at the end of a reset cycle, while it is kept as is at the end of a serious cycle; and 2) the prox-center is kept the same at the end of a reset cycle, but it is updated to the latest $x_j$ at the end of a serious cycle. 
\if{
We now make some remarks about the first iteration of
a cycle of U-PB. First,  if
$t_i \le \varepsilon$,
then $x_i$ satisfies 
the inequality
\begin{align*}
  t_{i} &= \phi(y_i) +
		\frac{\chi}{2\lam} \|y_i-\hat x_{k-1}\|^2 - \left((f_i+h)(x_{i}) +\frac{1}{2\lam}\|x_{i} - \hat x_{k-1} \|^2\right) \\
       &= \phi(y_i) +
		\frac{\chi}{2\lam} \|y_i-\hat x_{k-1}\|^2 - \left(\ell_f(x_i;\hat x_{k-1})+h(x_{i}) +\frac{1}{2\lam}\|x_{i} - \hat x_{k-1} \|^2\right) \\ 
    &\le \phi(x_i) +
		\frac{\chi}{2\lam} \|x_i-\hat x_{k-1}\|^2 - \left(\ell_f(x_i;\hat x_{k-1})+h(x_{i}) +\frac{1}{2\lam}\|x_{i} - \hat x_{k-1} \|^2\right) \\     
        &= 
    f(x_i) - \ell_f(x_i;\hat x_{k-1}) - (1-\chi) \frac{\|x_i-\hat x_{k-1}\|^2}{2\lam}   
\end{align*}
}\fi

We now make three remarks about U-PB.
First, U-CS is a special case of U-PB with $\overline N=1$ and 
$f_{j+1}=\ell_f(\cdot;{\hat x}_{k-1})$ in step 3.d.
Second, 
it follows from the fact that $f_j\le f$ and the definition of $t_j$ in \eqref{ineq:hpe1} that the primal gap of the prox subproblem in \eqref{eq:key-prox} at the iterate $y_j$ is upper bounded by $t_j+(1-\chi)\|y_j-\hat x_{k-1}\|^2/(2\lam)$. Hence, if $t_j\le (1-\chi){\bar \varepsilon}/2$, then $y_j$ is an $\varepsilon_j$-solution of \eqref{eq:key-prox} where $\varepsilon_j=(1-\chi) [ {\bar \varepsilon}/2+\|y_j-\hat x_{k-1}\|^2/(2\lam)]$.
Third,
the iterate $y_{j}$ computed in step 2 of U-PB  satisfies
\begin{equation}\label{eq:yj-intuition}
    y_j \in \Argmin \left\lbrace \phi(x) + \frac{\chi}{2\lam}\|x-\hat x_{k-1}\|^2 :
		x \in \{x_{i},\ldots,x_j\}
		\right\rbrace,
\end{equation}
where $i$ denotes the first iteration index of the cycle containing $j$.
In other words, $y_j$ is the best point 
in terms of $\phi(\cdot) + \chi\|\cdot-\hat x_{k-1}\|^2/(2\lam)$
among all the  points obtained
in the course of solving \eqref{def:xj} and the point
$\hat y_{k-1}$ obtained at the end
of the previous cycle.



We now state the first main result of this subsection where  the functional  iteration complexity of U-PB
is established.

\begin{theorem}\label{complprimal-2U-PBt}
Assuming that $\dom h$ has a finite diameter $D>0$.
Let 
\begin{equation}\label{defa}
B=8+ 12 \log\left(1+\frac{L_f^2 D^2 \overline N}{16 M_f^2}\right)
\end{equation}
and define $U(\bar \varepsilon)$ by
\begin{equation}\label{defU}
U(\bar \varepsilon):=
\frac{4(M_f^2 + \bar \varepsilon L_f) B }{(1-\chi)^2} + \frac{ \overline N \bar \varepsilon}{\lam_0}.
 \end{equation}
The total number of inner iterations (i.e., the ones indexed by $j$) performed by U-PB
is bounded by
\begin{equation}\label{complprimal-2ag}
 \min\left\{\min\left[ \frac{1}{\chi}\left({\overline N}+  \frac{U({\bar \varepsilon})}{\bar \varepsilon \mu_\phi}\right),  {\overline  N}+\frac{U({\bar \varepsilon})}{\bar \varepsilon \mu_h}\right]
\log \left(1 + \frac{\mu_\phi d_0^2}{\bar \varepsilon}\right) \, , \, \frac{d_0^2 U({\bar \varepsilon})}{\bar \varepsilon^2} \right\}
+
{\overline  N} \displaystyle \left \lceil 
2 \log  \frac{\lambda_0 
U(\bar \varepsilon)}{{\overline N}\bar \varepsilon} \right \rceil.
\end{equation}
\end{theorem}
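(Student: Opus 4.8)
The plan is to exhibit U-PB as a particular instance of the framework FSCO of Section~\ref{sec:FSCO}, so that the functional complexity bound \eqref{complprimal-2ag} follows from the oracle complexity of FSCO once two ingredients are in place: (i) the quantities produced at the serious cycles of U-PB fulfill the structural hypotheses of FSCO; and (ii) the prox stepsizes $\lambda_k$ stay bounded below by an explicit $\underline\lambda>0$, which both feeds into the FSCO bound and controls the number of reset cycles. The total number of inner iterations is then split as $\overline N$ times the number of serious cycles plus $\overline N$ times the number of reset cycles, the first bounded through FSCO and the second through the lower bound on $\lambda$.

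First I would set up the identification. For each $k$, let $\lambda_k$, $\hat x_k=x_j$, $\hat y_k=y_j$, and $\hat\Gamma_k=f_j+h$ be the quantities recorded at the $k$-th serious update (step 3.c). Since $f_j\le f$ along the whole run, $\hat\Gamma_k\le\phi$, and by \eqref{def:xj} the point $\hat x_k$ is the exact minimizer of $\hat\Gamma_k(\cdot)+\|\cdot-\hat x_{k-1}\|^2/(2\lambda_k)$. The serious-update test $t_j\le(1-\chi)\bar \varepsilon/2$, combined with the definitions \eqref{def:txj2} of $\bar\phi_j$ and \eqref{ineq:hpe1} of $t_j$, yields the descent-type inequality
\[
\phi(\hat y_k)+\frac{\chi}{2\lambda_k}\|\hat y_k-\hat x_{k-1}\|^2 \le \hat\Gamma_k(\hat x_k)+\frac{1}{2\lambda_k}\|\hat x_k-\hat x_{k-1}\|^2+\frac{(1-\chi)\bar \varepsilon}{2},
\]
while \eqref{eq:yj-intuition} shows that $\hat y_k$ is the best point, in terms of $\phi(\cdot)+\chi\|\cdot-\hat x_{k-1}\|^2/(2\lambda_k)$, among $\hat y_{k-1}$ and the inner iterates of cycle $k$. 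These are exactly the conditions that let FSCO certify $\phi(\hat y_K)-\phi_*\le\bar \varepsilon$ and bound the number $K$ of serious cycles by $\min\{\min[\chi^{-1}(1+1/(\underline\lambda\mu)),\, 1+1/(\underline\lambda\nu)]\log(1+\mu d_0^2/\bar \varepsilon),\, d_0^2/(\underline\lambda\bar \varepsilon)\}$ up to absorbing constants, where $\underline\lambda:=\min_k\lambda_k$.

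Next I would establish the lower bound $\underline\lambda\ge\min\{\lambda_0,\ (1-\chi)^2\overline N\bar \varepsilon/[4(M_f^2+\bar \varepsilon L_f)B]\}$, the analogue of Lemma~\ref{lem:lam} for U-PB. This is the heart of the proof: one must show that once $\lambda$ is this small, a cycle started with this $\lambda$ is serious — i.e.\ it reaches $t_j\le(1-\chi)\bar \varepsilon/2$ within $\overline N$ inner iterations and never triggers a reset. The argument tracks how $t_j$ decreases across the null iterations of a cycle using the BU property \eqref{def:Gamma}, the hybrid estimate \eqref{ineq:est}, and the boundedness of $\dom h$ by $D$; the logarithmic factor $B$ in \eqref{defa} is precisely the bound on the number of null steps needed to make the bundle model accurate enough to overcome the $L_f$-curvature (measured against the $M_f$-noise floor over a set of diameter $D$), after which the residual $M_f$-term is controlled exactly as in Lemma~\ref{lem:lam}. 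This parallels the single-cycle analysis of GPB in \cite{liang2023unified}, but the finite cap $\overline N$ forces the explicit trade-off between shrinking $\lambda$ and spending null iterations that produces $B$ and $U(\bar \varepsilon)$. Since each reset cycle halves $\lambda$ and no reset can occur once $\lambda\le\underline\lambda$, the number of reset cycles is at most $\log_2(\lambda_0/\underline\lambda)$; I expect this step to be the main obstacle.

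Finally, I would assemble the count: every cycle, serious or reset, performs at most $\overline N$ inner iterations, so the total is at most $\overline N$ times (number of serious cycles $+$ number of reset cycles). Using $U(\bar \varepsilon)/\bar \varepsilon\ge\overline N/\underline\lambda$ — immediate from \eqref{defU} and the lower bound on $\underline\lambda$ — to rewrite the FSCO estimate on $K$ gives $\overline N K\le\min\{\min[\chi^{-1}(\overline N+U(\bar \varepsilon)/(\mu\bar \varepsilon)),\, \overline N+U(\bar \varepsilon)/(\nu\bar \varepsilon)]\log(1+\mu d_0^2/\bar \varepsilon),\, d_0^2 U(\bar \varepsilon)/\bar \varepsilon^2\}$, and $\overline N\log_2(\lambda_0/\underline\lambda)\le\overline N\lceil 2\log(\lambda_0 U(\bar \varepsilon)/(\overline N\bar \varepsilon))\rceil$ since $\log_2 x\le 2\log x$ for $x\ge 1$ and $\lambda_0 U(\bar \varepsilon)/(\overline N\bar \varepsilon)\ge\lambda_0/\underline\lambda$. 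Adding the two bounds yields \eqref{complprimal-2ag}, and the argument is complete.
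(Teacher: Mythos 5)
Your decomposition — cast the serious cycles of U-PB as FSCO iterations, lower-bound $\underline\lambda$, charge at most $\overline N$ inner iterations per cycle, and sum serious plus reset cycles — is exactly the paper's plan (Propositions~\ref{lem1:U-PB} and~\ref{lem1:U-PB1}, then Theorem~\ref{thouter}). However, the step you yourself flag as ``the main obstacle'' is left as a verbal sketch, and as written it does not constitute a proof; indeed your description of the mechanism is not quite right.

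Concretely, the paper establishes the lower bound on $\lambda$ (the analogue of your claimed $\underline\lambda$) through three ingredients you would need to supply. First, Lemma~\ref{lem:tj} gives the quantitative decay $t_j - (1-\chi)\bar\varepsilon/4 \le \tau^{\,j-i}\bigl(t_i - (1-\chi)\bar\varepsilon/4\bigr)$ with $\tau = u_\lambda/(1+u_\lambda)$ and $u_\lambda$ as in \eqref{def:ulam}; this requires the recursive estimate of Lemma~\ref{lem:mj} built from the BU property and \eqref{ineq:est}, and then a $\log x \le x-1$ manipulation to conclude $j-i < (1+u_\lambda)\log\bigl(4t_i/((1-\chi)\bar\varepsilon)\bigr)$. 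Second, you need the two bounds on $t_i$ of Lemma~\ref{lem:tj2}: one valid only for small $\lambda$ (namely $t_i \le 4\lambda M_f^2/(1-\chi)$ when $\lambda \le (1-\chi)/(2L_f)$) and one valid for all $\lambda$ but invoking (A4) (namely $t_i \le \lambda(16M_f^2 + L_f^2 D^2)/(4(1-\chi))$). Third, and this is the part your sketch misses entirely, Lemma~\ref{lemserious} combines these via a case split on whether $8\overline N \le B$ or $8\overline N > B$: in the first case the small-$\lambda$ bound on $t_i$ makes $\log(2t_i/\varepsilon)\le 0$ and the cycle terminates trivially; in the second case one bounds $1+u_\lambda \le 12\overline N/B$ and $\log(2t_i/\varepsilon)\le B(\overline N-1)/(12\overline N)$ separately, and their product is $\le \overline N - 1$. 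The constant $B$ is therefore not ``the bound on the number of null steps'' as you describe it, but rather the pivot that balances the two factors in the product $(1+u_\lambda)\log(2t_i/\varepsilon)$; the $8$ and the $12\log(1+L_f^2D^2\overline N/(16M_f^2))$ in \eqref{defa} are chosen precisely so that this balance works out in both cases. Without this case analysis, and without the recursive decay estimate for $t_j$, there is no actual proof that a cycle with $\lambda$ below the threshold \eqref{condlambda} must be serious, and hence no proof of the lower bound on $\underline\lambda$ that the rest of your argument correctly relies on.
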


The complexity result of Theorem
\ref{complprimal-2U-PBt}, under the mild conditions that
$\chi$ is neither close
to one nor zero (i.e., $\max\{ \chi^{-1},(1-\chi)^{-1}\} = {\cal O}(1)$),
\[
\overline  N = {\cal O} \left(\frac{M_f^2 + \bar \varepsilon L_f }{ \bar \varepsilon \mu_\phi } \min \left\{
1 , \lam_0 \mu_\phi  \right\} \right),
\]
and $\mu_\phi \gg \mu_h$,
reduces 
to 
\begin{equation}\label{compU-PB-1}
\tilde {\cal O}\left( \frac{M_f^2}{\bar \varepsilon \mu_\phi} + \frac{L_f}{\mu_\phi} \right),
\end{equation}
which is similar to the 
functional complexity bound
\eqref{compU-PB}
obtained for U-CS.
Moreover, similar 
to U-CS,
bound \eqref{compU-PB-1} with $(\mu_\phi,M_f)$ replaced by $(\bar \mu,\bar M)$ is nearly optimal for the  class of instances $(f,h)$ of problem \eqref{eq:ProbIntro2} where $L_f=0$, $M_f \le \bar M$, and $\mu_\phi \ge \bar \mu$, because of the lower complexity remarks made in the paragraph following \eqref{defd0}.

\section{A functional framework for strongly convex optimization}\label{sec:FSCO}

This section presents a general framework, namely FSCO, for (strongly) convex optimization problems and establishes an oracle-complexity bound for it.

FSCO is presented in
the context of
the convex optimization problem
\begin{equation}\label{eq:ProbIntro}
	\phi_{*}:=\min \left\{\phi(x): x \in \R^n\right\}
	\end{equation}
where $\phi \in \bConv{n}$.
It will be shown in this section that any instance of FSCO is $\mu_\phi$-universal.
Moreover, its oracle-complexity bound will be used in Subsections~\ref{subsec:proof-CS} and \ref{sec:proofupb} to establish the iteration-complexities of two important instances, namely, the U-CS method 
described in Subsection 
\ref{subsec:U-CS}, and the U-PB method described in Subsection 
\ref{sec:bundle}. It is worth mentioning, though, that 
FSCO is only used to establish universality relative to $\mu_\phi$.
Complete universality of the two specific instances mentioned above, and hence relative to the parameter pair $(M_f,L_f)$, is established by
performing line searches on the prox stepsize
$\lam_k$, as described in step 2 of U-CS and in step 3 of U-PB.

Given a prox-center $x^- \in \dom \phi$, every iteration of FSCO approximately solves a prox subproblem of the form
\begin{equation}\label{def:prox}
    \min_{u\in \R^n} \left\{\phi(u)+\frac{1}{2 \lam}\|u-x^-\|^2\right\},
\end{equation}
for some $\lam>0$, to obtain an inexact solution $y$
whose primal gap size is used to terminate FSCO.
The black-box below, which is repeatedly invoked by FSCO, describes how the above subproblem is approximately solved. 




\noindent\rule[0.5ex]{1\columnwidth}{1pt}
	
	 Black-box (BB)
     $(x^-,\chi,\varepsilon)$
	
	\noindent\rule[0.5ex]{1\columnwidth}{1pt}
    {\bf Input:}  $(x^-,\chi,\varepsilon) \in \dom \phi \times [0,1) \times \R_{++}$.
    
    \vgap
    
    \noindent
    {\bf Output:}
    $(x,y,\Gamma,\lam) \in \dom \phi \times \dom \phi \times \bConv{n} \times \R_{++}$ satisfying $\Gamma \le \phi$,
\begin{equation}\label{propyb}
    \phi(y)+\frac{\chi}{2 \lambda}\|y-x^-\|^2 - \underset{u\in\R^n}\min\left\{\Gamma(u) + \frac{1}{2\lam}\|u- x^- \|^2\right\} \le \varepsilon,
\end{equation}
and
\beq 
    x = \underset{u\in\R^n} \argmin \left\lbrace \Gamma(u) + \frac{1}{2\lam}\|u- x^- \|^2 \right\rbrace. \label{defxhatk}
     \eeq
    
\noindent\rule[0.5ex]{1\columnwidth}{1pt}


We now make some remarks about BB.
First, $x^-$ is the current prox center and $(\chi,\bar \varepsilon)$ are tolerances used to quantify the inexactness of $y$ as a solution of \eqref{def:prox}. Second, \eqref{propyb} quantifies the inexactness of $y$ in terms of a functional certificate
$\Gamma$,
which is a bundle for $\phi$.
More precisely, $y$ is a $\hat \varepsilon$-solution of \eqref{def:prox}, where $\hat \varepsilon = \varepsilon + (1-\chi)\|y- x^- \|^2/(2\lam)$. Indeed, this follows from \eqref{propyb} and the fact that $\Gamma\le \phi$.
Finally, even though the next prox-center $x$ is viewed as an output
of BB, it
is uniquely determined
by $\Gamma$
as in  \eqref{defxhatk}, and hence could alternatively be removed from the BB output. In other words, the relevant output produced by BB is $(\lam, y,\Gamma)$.


The following result describes a key contraction inequality that holds for the two prox-centers $x^-$ and $x$ of BB.
Its proof is postponed to Subsection \ref{subsec:contraction} and uses a key result stated in Appendix \ref{subsec:tech-FSCO}.

\begin{proposition}\label{propouter}
Assume that  $\phi \in \bConv{n}$ and let $\mu=\mu_\phi$.
Assume also that
$(x,y,\Gamma,\lam) = \BB(x^-,\chi,\varepsilon)$
for some $(x^-,\chi,\varepsilon) \in \dom \phi \times [0,1) \times \R^n_{++}$, and $\Gamma \in \nConv{n}$ for some $\nu \in [0,\mu]$.
Then, for every $u \in \R^n$, we have
    
\begin{equation}\label{ineq:contraction}
    2 \lambda\left[\phi(y)-\phi(u)\right]  \le \frac{2\lam \varepsilon}{1-\chi} + \left\|x^- - u\right\|^2 - (1+\sigma) \|x-u\|^2,
\end{equation}
where 
\begin{equation}\label{eq:sigma}
    \sigma = \sigma(\mu,\nu) := \frac{\lam[\nu(1+\mu \lam)+\chi(\mu-\nu)]}{1+\mu  \lambda+\chi  \lambda(\nu-\mu)}.
\end{equation} 
\end{proposition}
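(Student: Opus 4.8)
The plan is to derive the contraction inequality \eqref{ineq:contraction} by combining three ingredients: the optimality of $x$ for the $\lam$-regularized subproblem defined by $\Gamma$, the $\nu$-convexity of $\Gamma$, and the $\mu$-convexity of $\phi$, together with the inexactness bound \eqref{propyb}. First I would write $\psi(u) := \Gamma(u) + \frac{1}{2\lam}\|u - x^-\|^2$. Since $\Gamma \in \nConv{n}$, the function $\psi$ is $(\nu + 1/\lam)$-strongly convex, and $x$ from \eqref{defxhatk} is its unique minimizer, so the standard strong-convexity-at-the-minimizer inequality gives, for every $u$,
\begin{equation*}
\psi(u) \ge \psi(x) + \frac{1}{2}\left(\nu + \frac1\lam\right)\|x - u\|^2.
\end{equation*}
Rearranging and using \eqref{propyb} in the form $\phi(y) + \frac{\chi}{2\lam}\|y - x^-\|^2 \le \psi(x) + \varepsilon$, I would get a bound of the type
\begin{equation*}
\phi(y) + \frac{\chi}{2\lam}\|y-x^-\|^2 \le \Gamma(u) + \frac{1}{2\lam}\|u - x^-\|^2 - \frac12\left(\nu + \frac1\lam\right)\|x-u\|^2 + \varepsilon.
\end{equation*}

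Next I would bound $\Gamma(u)$ from above. The difficulty here is that $\Gamma \le \phi$ gives the wrong-direction inequality $\Gamma(u) \le \phi(u)$ directly, which discards all strong-convexity information and would only yield $\sigma$ built from $\nu$, not the larger quantity involving $\mu$. The trick — and I expect this to be the main obstacle — is instead to relate $\Gamma(u)$ to $\phi$ evaluated at a cleverly chosen point, exploiting that $x$ minimizes $\Gamma(\cdot) + \frac1{2\lam}\|\cdot - x^-\|^2$. Concretely, I anticipate the key lemma in Appendix \ref{subsec:tech-FSCO} provides an inequality of the form: for any $u$ and any $\alpha \in [0,1]$, writing $x_\alpha = \alpha u + (1-\alpha) x$,
\begin{equation*}
\Gamma(x_\alpha) + \frac{1}{2\lam}\|x_\alpha - x^-\|^2 \ge \Gamma(x) + \frac{1}{2\lam}\|x - x^-\|^2 + \frac{\alpha^2}{2}\left(\nu + \frac1\lam\right)\|u - x\|^2,
\end{equation*}
and simultaneously, by $\mu$-convexity of $\phi$ and $\Gamma \le \phi$, one controls $\Gamma(x_\alpha) \le \phi(x_\alpha) \le \alpha\phi(u) + (1-\alpha)\phi(x) - \frac{\alpha(1-\alpha)\mu}{2}\|u-x\|^2$. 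Feeding $x_\alpha$ (rather than $u$) into the $\psi$-minimizer inequality and then optimizing over $\alpha \in [0,1]$ should produce the coefficient $\sigma$ of \eqref{eq:sigma}, with the numerator $\nu(1+\mu\lam) + \chi(\mu - \nu)$ emerging from balancing the $\mu$-term against the $\nu$-term at the optimal $\alpha$. The presence of $\chi$ in $\sigma$ should come from the $\chi\|y-x^-\|^2/(2\lam)$ slack in \eqref{propyb} combining with the $\mu$-convexity term along $y$; I would track the point $y$ carefully through this estimate.

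Once the coefficient $1 + \sigma$ appears in front of $\|x - u\|^2$, the remaining steps are routine: collect the $\varepsilon$ term, noting that the $\chi$ on the left-hand side of \eqref{propyb} turns into the factor $1/(1-\chi)$ multiplying $2\lam\varepsilon$ on the right of \eqref{ineq:contraction} (this is exactly the manipulation where one writes $\phi(y) - \phi(u) = [\phi(y) + \frac{\chi}{2\lam}\|y-x^-\|^2 - \phi(u)] - \frac{\chi}{2\lam}\|y-x^-\|^2$ and absorbs the negative quadratic, rescaling $\varepsilon \mapsto \varepsilon/(1-\chi)$ — I should double-check whether the $(1-\chi)$ enters multiplicatively through that absorption or through an earlier Young-type step). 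Multiplying through by $2\lam$ then gives precisely \eqref{ineq:contraction}. Throughout, the hypothesis $\nu \in [0,\mu]$ is what guarantees $\sigma \ge 0$ and that the denominator $1 + \mu\lam + \chi\lam(\nu - \mu)$ is positive (since $\chi\lam(\mu - \nu) \le \mu\lam \le 1 + \mu\lam$), so the final inequality is a genuine contraction; I would note this positivity explicitly before concluding.
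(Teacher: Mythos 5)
Your overall strategy---inserting a convex combination into the strong-convexity-at-the-minimizer inequality for $\psi(\cdot)=\Gamma(\cdot)+\tfrac1{2\lam}\|\cdot-x^-\|^2$, then bounding the $\Gamma$ value from above via $\mu$-convexity of $\phi$---is a genuinely different and more elementary path to the paper's key intermediate inequality than the one the paper takes (the paper goes through an $\eta$-subdifferential of $\phi$ at $y$ plus a Young-type $\varepsilon$-subgradient lemma). But as written your proposal has a concrete gap: you set $x_\alpha=\alpha u+(1-\alpha)x$ and bound $\Gamma(x_\alpha)\le\phi(x_\alpha)\le\alpha\phi(u)+(1-\alpha)\phi(x)-\tfrac{\alpha(1-\alpha)\mu}{2}\|u-x\|^2$, which introduces the term $(1-\alpha)\phi(x)$. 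In the abstract $\BB$ framework nothing controls $\phi(x)$ from above in terms of $\phi(y)$ (the bound $\Gamma(x)\le\phi(x)$ goes the wrong way, and $y$, not $x$, is the point whose $\phi$-value is certified by \eqref{propyb}), so this term cannot be eliminated.

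The fix is to take the convex combination with $y$, not $x$: set $z=(1-\chi)u+\chi y$, i.e. $\alpha=1-\chi$. Then $\psi(z)\ge\psi(x)+\tfrac{\nu+1/\lam}{2}\|z-x\|^2\ge\phi(y)+\tfrac{\chi}{2\lam}\|y-x^-\|^2-\varepsilon+\tfrac{\nu+1/\lam}{2}\|z-x\|^2$, while $\Gamma(z)\le\phi(z)\le(1-\chi)\phi(u)+\chi\phi(y)-\tfrac{\chi(1-\chi)\mu}{2}\|u-y\|^2$. Multiplying through by $2\lam/(1-\chi)$ and expanding $\|z-x\|^2$ and $\|z-x^-\|^2$ with the identity $\|\alpha a+(1-\alpha)b\|^2=\alpha\|a\|^2+(1-\alpha)\|b\|^2-\alpha(1-\alpha)\|a-b\|^2$, the $\|y-x^-\|^2$ terms cancel exactly (this is what pins down $\alpha=1-\chi$; it is not a free optimization), and one lands precisely on the paper's \eqref{telescopic1}:
\begin{equation*}
2\lam\bigl[\phi(y)-\phi(u)\bigr]\le\frac{2\lam\varepsilon}{1-\chi}+\|x^--u\|^2-(1+\nu\lam)\|x-u\|^2-\chi\Bigl(\frac{1+\nu\lam}{1-\chi}\|x-y\|^2+(\mu-\nu)\lam\|u-y\|^2\Bigr).
\end{equation*}
From there you still need the final step the paper does: a weighted Cauchy--Schwarz/triangle bound of the form $\|x-u\|^2\le(b_1^{-1}+b_2^{-1})(b_1\|x-y\|^2+b_2\|u-y\|^2)$ with $b_1=(1+\nu\lam)/(1-\chi)$, $b_2=(\mu-\nu)\lam$ to merge the two quadratic terms into a $\|x-u\|^2$ term and produce the coefficient $1+\sigma$ with $\sigma$ as in \eqref{eq:sigma}. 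Your write-up stops short of this. Also note that the technical lemma in the appendix is not the convex-combination inequality you anticipated; it is an $\varepsilon$-subgradient inequality with a Young trade-off parameter $\tau$, used to turn $s\in\partial_\eta\phi(y)$ into a quadratic lower bound on $\phi$ at the cost of inflating $\eta$ by $1/(1-\zeta)$. Your approach sidesteps that machinery entirely, which is a reasonable trade: slightly more algebra inside the proof, but no need for the auxiliary lemma or the $\eta$-subdifferential bookkeeping.
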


We now motivate the assumption made
by Proposition \ref{propouter} on $\Gamma$ by
considering the special case \eqref{eq:ProbIntro2} of problem \eqref{eq:ProbIntro}
where $\phi=f+h$ and $f$ and $h$ are
as in (A1)-(A3).
In this setting, many algorithms (see e.g., U-CS and U-PB in Section \ref{sec:CS pf}) generate  $\Gamma$ functions of the form
$\Gamma_f+h$ where $\Gamma_f$ is a convex (possibly, affine) function underneath $f$.
Clearly, letting $\nu =\mu_h$, it follows that bundles $\Gamma$ generated in this form are $\nu$-convex, since so is $h$. In conclusion, the $\nu$-convexity of $\Gamma$ is inherited from the $\nu$-convexity assumption on the composite component $h$
of $\phi$, i.e., the part of $\phi$ that is not approximated by simpler functions in the prox subproblems generated while solving \eqref{eq:ProbIntro2}.


We make some remarks about Proposition \ref{propouter}.
If $\sigma$ in \eqref{eq:sigma} is such that $\sigma=\Theta(\lam \mu)$, then
it will be shown in Theorem \ref{thouter} that an algorithm that repeatedly calls BB finds an $\bar \varepsilon$-solution of \eqref{eq:ProbIntro} in
 $ {\cal O}((\lam \mu)^{-1} \log (\bar \varepsilon^{-1}))$ calls.
 We mention two cases where $\sigma = \Theta(\lam \mu)$. The first case is
 when $\nu \approx \mu$ and $\chi$ is arbitrary, but this case does not hold for many instances of \eqref{eq:ProbIntro}. In particular, the ``contraction" formula \eqref{ineq:contraction} for the pair $(\nu,\chi)=(\mu,0)$
 is well-known (e.g., see
 Lemma 4.1(a) of \cite{liang2023unified}).
The second case is when
 $\chi^{-1} = {\cal O}(1)$ and $\nu \in [0,\mu]$.
 Indeed, it follows from the first inequality in Lemma \ref{lem:sigma}(a) that $\sigma=\Theta(\lam \mu)$ when  $\chi^{-1} = {\cal O}(1)$.
 Finally, even though $\chi$ chosen very close to one satisfies the condition in the second case, such choice of $\chi$ is not desirable as it makes the term $2\lam \varepsilon/(1-\chi)$ in \eqref{ineq:contraction} too large.
 Thus, in view of the previous remarks, a good choice of $\chi$ is one such that
 $\max\{\chi^{-1}, (1-\chi)^{-1}\} = {\cal O}(1)$.

 We now describe FSCO.

 \noindent\rule[0.5ex]{1\columnwidth}{1pt}
	
	 FSCO$(\hat x_0, \chi, \bar \varepsilon)$
	
	\noindent\rule[0.5ex]{1\columnwidth}{1pt}
	\begin{itemize}
		\item [0.] Let $(\hat x_0, \chi, \bar \varepsilon) \in \dom \phi \times [0,1) \times \R_+ $ be given and set $k=1$;

     
    \item[1.] Compute 
$(\hat x_k, \hat y_k, \hat \Gamma_k, \lambda_k) = \BB(\hat x_{k-1},\chi, (1-\chi) \bar \varepsilon/2)$;
    
		\item[2.]
  Check whether $\phi(\hat y_k) - \phi_* \le \bar \varepsilon$ and if so {\bf stop}; else set $k\leftarrow k+1$ and go to step 1.
	\end{itemize}
	\rule[0.5ex]{1\columnwidth}{1pt}
 
	
	



We now make some comments about FSCO.

First, FSCO generates two sequences, i.e., $\{\hat x_k\}$ and
$\{\hat y_k\}$.
If FSCO stops, then the last iterate  of the second one is guaranteed to be a $\bar \varepsilon$-solution of \eqref{eq:ProbIntro} due to step 2 of FSCO. Hence, the second sequence ensures that FSCO stops, while the 
 first one should be viewed as the sequence of prox-centers generated by the bundle sequence $\{\hat \Gamma_k\}$.

Second,
since BB does not provide a specific recipe for computing $(\hat x_k, \hat y_k, \hat \Gamma_k, \lambda_k)$ in step 1,
FSCO is not a completely specified algorithm but rather a framework built with the sole purpose
of studying $\mu_\phi$-universality of its instances.
Specifically,
FSCO provides minimal conditions on its generated sequence $(\hat x_k, \hat y_k, \hat \Gamma_k, \lambda_k)$ (i.e., only that is generated by iteratively calling BB) to ensure $\mu_\phi$-universality of its instances.

Finally, for some instances of FSCO, such as U-CS and U-PB,
$\lam_k$ is computed by performing a line search on $\lam$ within
specific implementations of BB, because some parameters associated with $\phi$
(e.g., $L_f$ and $M_f$ in the setting of Section \ref{sec:CS pf}) are assumed to be unknown.
When these parameters are known, the line search can be avoided by choosing $\lam_k$ in terms of these parameters.


 The main complexity result for FSCO is relative to the  instances that satisfy the following condition:
\begin{itemize}
\item[(F1)] there exists $\underline \lambda>0$ such that
$\lambda_k \geq {\underline \lambda}$ for every $k \ge 0$.
\end{itemize}


\begin{theorem}\label{thouter}
The number of iterations performed by any instance of  FSCO that satisfies condition (F1) is bounded by
\begin{equation}\label{complprimal}
 \min\left\{\min\left[ \frac{1}{\chi}\left(1+\frac{1}{\underline{\lam} \mu}\right),  1+\frac{1}{\underline{\lam} \nu}\right] \log\left(1+ \frac{\mu d_0^2}{\bar \varepsilon}\right), \frac{d_0^2}{{\underline \lambda}{\bar \varepsilon}} \right\},
\end{equation}
where the scalars $\chi$ and $\bar \varepsilon$ are input to FSCO, $d_0$ is as in \eqref{defd0}, and
\begin{equation}\label{eq:nu}
    \mu=\mu_\phi, \quad \nu := \min \left\{ \mu_\phi \, , \, \inf_k \mu (\hat \Gamma_k) \right\}.
\end{equation}
${\rm (}$By convention, \eqref{complprimal} 
should be understood as being equal to $d_0^2/({\underline{\lambda}}{\bar{\varepsilon}})$ when $\mu=0$.${\rm )}$
\end{theorem}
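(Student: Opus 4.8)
The plan is to combine the contraction inequality of Proposition~\ref{propouter} (telescoping it over the iterations of FSCO) with two complementary estimates: a ``linear'' (geometric) decay estimate, valid when $\mu>0$, and a ``sublinear'' ($1/k$) decay estimate, which always holds. First, I would set up the running quantities $d_k := \|\hat x_k - x_*\|$ for a fixed optimal $x_* \in X_*$ (so $d_0$ agrees with \eqref{defd0} after minimizing over $X_*$), and apply Proposition~\ref{propouter} at iteration $k$ with $x^- = \hat x_{k-1}$, $x = \hat x_k$, $y = \hat y_k$, $\varepsilon = (1-\chi)\bar\varepsilon/2$, and $u = x_*$. Since each BB call produces $\hat\Gamma_k \in \nConv{n}$ with $\nu \le \mu$ (the $\nu$ in \eqref{eq:nu} is a lower bound on all $\mu(\hat\Gamma_k)$, and also $\le\mu$), the hypotheses of Proposition~\ref{propouter} are met, giving for each $k$
\begin{equation}\label{eq:plan-contraction}
2\lam_k[\phi(\hat y_k) - \phi_*] \le \bar\varepsilon \lam_k + d_{k-1}^2 - (1+\sigma_k) d_k^2,
\end{equation}
where $\sigma_k = \sigma(\mu,\nu)$ evaluated at $\lam = \lam_k$. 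The key monotonicity facts I would record are: (i) the map $\lam \mapsto \sigma(\mu,\nu)$ is nondecreasing (this should follow from Lemma~\ref{lem:sigma}, referenced in the excerpt), so under (F1) we have $\sigma_k \ge \sigma(\mu,\nu)|_{\lam=\underline\lam} =: \underline\sigma$; and (ii) the first inequality in Lemma~\ref{lem:sigma}(a) gives the lower bound $\underline\sigma \ge c \cdot \underline\lam\mu$ with $c$ depending on $\chi$ — more precisely the two branches $\min[\chi^{-1}(1+(\underline\lam\mu)^{-1}), 1+(\underline\lam\nu)^{-1}]$ in \eqref{complprimal} correspond to using either the $\chi$-driven or the $\nu$-driven lower bound on $\underline\sigma$.

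\medskip
\noindent\textbf{Linear rate.} Before termination we have $\phi(\hat y_k) - \phi_* > \bar\varepsilon$, so the left side of \eqref{eq:plan-contraction} is $> 2\lam_k\bar\varepsilon$, whence (using $\lam_k \ge \underline\lam$ and dropping the now-negative residual term) $d_k^2 \le (1+\underline\sigma)^{-1} d_{k-1}^2$ for every non-terminal $k$. Iterating, $d_k^2 \le (1+\underline\sigma)^{-k} d_0^2$. On the other hand, from \eqref{eq:plan-contraction} at any non-terminal step, $2\lam_k\bar\varepsilon < \bar\varepsilon\lam_k + d_{k-1}^2$, i.e. $\underline\lam\bar\varepsilon \le \lam_k\bar\varepsilon < d_{k-1}^2$; combined with the geometric bound this forces $(1+\underline\sigma)^{k-1} < d_0^2/(\underline\lam\bar\varepsilon)$. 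Taking logarithms, the number $K$ of iterations satisfies $K - 1 \le \log(d_0^2/(\underline\lam\bar\varepsilon)) / \log(1+\underline\sigma)$, and then I would use $\log(1+t) \ge t/(1+t)$ together with $\underline\sigma \ge c\underline\lam\mu$ and a short manipulation to convert $\log(1+\underline\sigma)^{-1}$ into the factor $\min[\chi^{-1}(1+1/(\underline\lam\mu)),\, 1+1/(\underline\lam\nu)]$ and $\log(d_0^2/(\underline\lam\bar\varepsilon))$ into $\log(1+\mu d_0^2/\bar\varepsilon)$ (absorbing the additive constants; this is where the exact bookkeeping of the $(1+\cdot)$ terms in \eqref{complprimal} matters, but it is routine once the right elementary inequalities are in hand).

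\medskip
\noindent\textbf{Sublinear rate.} For the bound $d_0^2/(\underline\lam\bar\varepsilon)$, I would drop $\sigma_k \ge 0$ in \eqref{eq:plan-contraction} to get the telescoping inequality $2\underline\lam[\phi(\hat y_k)-\phi_*] - \bar\varepsilon\underline\lam \le 2\lam_k[\phi(\hat y_k)-\phi_*] - \bar\varepsilon\lam_k \le d_{k-1}^2 - d_k^2$ — wait, I must be careful about the sign of $(\phi(\hat y_k)-\phi_*)$ when replacing $\lam_k$ by $\underline\lam$; since before termination $\phi(\hat y_k)-\phi_* > \bar\varepsilon > 0$, the quantity $2\lam[\phi(\hat y_k)-\phi_*] - \bar\varepsilon\lam = \lam(2[\phi(\hat y_k)-\phi_*]-\bar\varepsilon)$ is increasing in $\lam$, so lower-bounding $\lam_k$ by $\underline\lam$ is valid. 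Summing over $k = 1,\dots,K$ where $K$ is the last (non-terminal, hence with $\phi(\hat y_k)-\phi_*>\bar\varepsilon$) index gives $K \cdot \underline\lam\bar\varepsilon \le \sum_k \underline\lam(2[\phi(\hat y_k)-\phi_*]-\bar\varepsilon) \le d_0^2 - d_K^2 \le d_0^2$, i.e. $K \le d_0^2/(\underline\lam\bar\varepsilon)$. Combining the two bounds by taking the minimum yields \eqref{complprimal}, and the stated convention for $\mu=0$ is just the observation that the linear bound degenerates (the $\log(1+\mu d_0^2/\bar\varepsilon)$ and the $1/(\underline\lam\mu)$ terms) while the sublinear bound survives.

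\medskip
The main obstacle I anticipate is not any single step but the careful conversion in the linear-rate argument: extracting from Lemma~\ref{lem:sigma} the precise two-sided control on $\underline\sigma$ (namely $\underline\sigma = \Theta(\underline\lam\mu)$ via the $\chi$-route and $\underline\sigma \ge \underline\lam\nu/(1+\underline\lam\nu)$-type bound via the $\nu$-route), and then showing that $1/\log(1+\underline\sigma)$ is dominated by each of the two displayed prefactors up to the additive ``$1+$'' and ``$\overline N$''-free constants. Getting the constants to land exactly as in \eqref{complprimal} — rather than merely up to an absolute factor — is the delicate part; everything else is telescoping plus the elementary inequality $\log(1+t)\ge t/(1+t)$.
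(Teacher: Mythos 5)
Your overall architecture matches the paper's: invoke Proposition~\ref{propouter} with $x^-=\hat x_{k-1}$, $x=\hat x_k$, $y=\hat y_k$, $u=x_*$, $\varepsilon=(1-\chi)\bar\varepsilon/2$, use (F1) plus monotonicity of $\sigma(\cdot)$ in $\lam$ (Lemma~\ref{lem:sigma}(b)) to replace $\sigma_k$ by $\underline\sigma$, then split into a linear-rate and a sublinear estimate. The sublinear branch of your argument is correct and essentially reproduces what the paper extracts from Lemma~\ref{lm:easyrecur1}(b) at $\sigma=0$. The linear-rate branch, however, has a genuine gap.

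After dropping the $\lam_k\bar\varepsilon$ residual you obtain $d_k^2 \le (1+\underline\sigma)^{-1}d_{k-1}^2$ and, via $\underline\lam\bar\varepsilon < d_{k-1}^2$ at each non-terminal step, the bound
$K-1 \le \log\bigl(d_0^2/(\underline\lam\bar\varepsilon)\bigr)/\log(1+\underline\sigma)$. The trouble is that the logarithm carries $d_0^2/(\underline\lam\bar\varepsilon)$, not the target quantity $1+\mu d_0^2/\bar\varepsilon$ from \eqref{complprimal}. The conversion you hope to do ``absorbing the additive constants'' is not a matter of constants: the needed inequality $d_0^2/(\underline\lam\bar\varepsilon) \le 1 + \mu d_0^2/\bar\varepsilon$ fails whenever $\underline\lam\mu < 1$ and $d_0^2/\bar\varepsilon$ is large, so your bound is genuinely weaker (by the factor $\log(1/(\underline\lam\mu))$ lurking in the log) and does not establish the theorem. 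What you discarded when passing from $(1+\sigma_k)d_k^2 < d_{k-1}^2 - \lam_k\bar\varepsilon$ to $d_k^2 < d_{k-1}^2/(1+\underline\sigma)$ is precisely where the missing $\underline\sigma$ factor inside the logarithm comes from.

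The paper avoids this by not iterating a pure contraction. Instead it multiplies \eqref{eq:easyrecur1} by $(1+\sigma)^{j-1}$ and sums (Lemma~\ref{lm:easyrecur1}), so the geometric series $\sum_{j=1}^k(1+\sigma)^{j-1}=((1+\sigma)^k-1)/\sigma$ appears in the denominator; the $\sigma$ there is what produces $\log\bigl(1+\underline\sigma d_0^2/(\underline\lam\bar\varepsilon)\bigr)$, which is then upper-bounded by $\log(1+\mu d_0^2/\bar\varepsilon)$ via $\underline\sigma \le \underline\lam\mu$ (Lemma~\ref{lem:sigma}(a)). To repair your route without invoking Lemma~\ref{lm:easyrecur1}: keep the residual and work with the shifted quantity $e_k := d_k^2 + \underline\lam\bar\varepsilon/\underline\sigma$. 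From $(1+\underline\sigma)d_k^2 \le d_{k-1}^2 - \underline\lam\bar\varepsilon$ one checks that $e_k \le e_{k-1}/(1+\underline\sigma)$, and then $d_K^2\ge 0$ forces $(1+\underline\sigma)^K \le 1 + \underline\sigma d_0^2/(\underline\lam\bar\varepsilon)$, yielding the same log term as the paper after applying $\log(1+t)\ge t/(1+t)$ and Lemma~\ref{lem:sigma}(a),(c). Everything else in your plan — the use of Lemma~\ref{lem:sigma}(c) to split the prefactor into the $\chi$-route and the $\nu$-route, and the $\mu=0$ convention — is fine.
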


\begin{proof}
It is straightforward to see from Lemma \ref{lem:sigma}(b) that $\sigma(\lam) \ge \underline \sigma := \sigma(\underline \lam)$ for $\lam \ge \underline \lam$, where $\sigma(\lam)$ denotes the $\sigma$ defined in \eqref{eq:sigma}, considered as a function of $\lam$.
Note that we invoke the oracle $(\hat x_k, \hat y_k, \hat \Gamma_k, \lambda_k) = \BB(\hat x_{k-1},\chi, (1-\chi) \bar \varepsilon/2)$ in step 1.
Using \eqref{ineq:contraction}, assumption (F1), and the observation that $\sigma(\lam_k) \ge \underline \sigma$, we have
\begin{equation}\label{ohiustb}
    2 \lambda_k\left[\phi(\hat y_k)-\phi(u)\right]  \le \lam_k \bar \varepsilon + \left\|\hat x_{k-1}-u\right\|^2 - (1+\underline \sigma) \|\hat x_k-u\|^2,
\end{equation}
%
%
where 
\begin{equation}\label{def:sigma}
    \underline \sigma = \frac{\underline{\lam}[\nu(1+\mu \underline{\lam})+\chi(\mu-\nu)]}{1+\mu {\underline \lambda}+\chi {\underline \lambda}(\nu-\mu)},
\end{equation}
and $\mu$ and $\nu$ are as in \eqref{eq:nu}.
It is easy to see that \eqref{ohiustb} with $u=x_*$, where $x_*$ denotes the closest solution of \eqref{eq:ProbIntro} to the initial point $\hat x_0$, satisfies 
$$
	\gamma_k \eta_k \le \alpha_{k-1} - (1+\sigma)\alpha_k + \gamma_k \delta
$$
with
\begin{equation}\label{eq:parameter}
    \gamma_k = 2\lam_k, \quad \eta_k=\phi(\hat y_k) - \phi_*, \quad \alpha_k = \|\hat x_k-x_*\|^2, \quad \delta = \frac{\bar \varepsilon}{2}, \quad \sigma = \underline \sigma, \quad {\underline \gamma} = 2{\underline \lam},
\end{equation}
where ${\underline \gamma}$
satisfies $\gamma_k \geq {\underline \gamma}$.

Using Lemma \ref{lm:easyrecur1}(b) with the above parameters, Lemma \ref{lem:sigma}(c), and the definition of $\underline \sigma$ in \eqref{def:sigma} that the complexity to find a $\bar \varepsilon$-solution is
\begin{equation}\label{eq:complexity}
  \min\left\{\frac{1+\underline \sigma}{\underline \sigma} \log\left(1+ \frac{\underline\sigma d_0^2}{{\underline \lambda}{\bar \varepsilon}}\right), \frac{d_0^2}{{\underline \lambda}{\bar \varepsilon}} \right\} \le \min\left\{\min\left[ \frac{1}{\chi}\left(1+\frac{1}{\underline{\lam} \mu}\right),  1+\frac{1}{\underline{\lam} \nu}\right] \log\left(1+ \frac{\underline \sigma d_0^2}{{\underline \lambda}{\bar \varepsilon}}\right), \frac{d_0^2}{{\underline \lambda}{\bar \varepsilon}} \right\}.  
\end{equation}
Moreover, we note that $\underline \sigma \le \underline \lam \mu$ in view of the second inequality in Lemma \ref{lem:sigma}(a).
Finally, this observation and \eqref{eq:complexity} immediately imply the complexity in \eqref{complprimal}.
\end{proof}

We now comment on the complexity bound obtained in Theorem \ref{thouter}.
First, under the assumption that $\mu>0$, the bound 
\begin{equation}\label{bound:min}
    \min\left\{ \frac{1}{\chi}\left(1+\frac{1}{\underline{\lam} \mu}\right),  1+\frac{1}{\underline{\lam} \nu}\right\} \log\left(1+ \frac{\mu d_0^2}{\bar \varepsilon}\right)
\end{equation}
implied by \eqref{complprimal} 
is meaningful only when either 
$\chi>0$ or $\nu>0$
(otherwise, it should be understood as being infinity). 
Second, the validity of the
second bound in \eqref{bound:min} has been established for some composite subgradient and proximal bundle methods (see for example \cite{du2017rate,liang2020proximal}).
Third, if $\mu\gg\nu$ and $\chi$ is sufficiently
 bounded away from zero, the smallest term in \eqref{bound:min} is the first one, in which case
\eqref{complprimal} reduces to
\[
\frac{1}{\chi}\left(1+\frac{1}{\underline{\lam} \mu}\right) \log\left(1+ \frac{\mu d_0^2}{\bar \varepsilon}\right)
= \tilde {\cal O}\left ( \frac{1}{\underline{\lam} \mu}\right).
\]
Fourth, since the second bound $d_0^2/(\underline \lambda \bar \varepsilon)$ does not depend on  $\nu$, $\mu$ and $\chi$,
it holds for any parameters
$\mu \ge \nu \ge 0$
and $\chi \in [0,1)$.

\subsection{Proof of Proposition \ref{propouter}}\label{subsec:contraction}

We need the following technical result before proving Proposition \ref{propouter}.

\begin{lemma}\label{lemetab} 
Define 
 \begin{align}
	 s &:= \frac{x^- - x}{\lam}, \label{defsb}\\
	 \eta &:= \phi(y) - \Gamma(x) - \inner{s}{y- x}. \label{defetab}
	\end{align}

Then, the following statements hold:
\begin{itemize}
\item[a)]
  $s \in \partial \Gamma (x)$, i.e., for every $u \in \R^n$,
 \begin{equation}\label{eq:subdifb}
    \Gamma(u) \ge \Gamma(x) + \inner{s}{u-x}; 
 \end{equation}  
  \item[b)] $s \in \partial_{\eta} \phi(y)$ and
  \begin{equation}\label{ineqetab}
  0 \le 2 \lam \eta \le  2 \lam \varepsilon - 
	    \|y- x\|^2 + (1-\chi)\|y- x^-\|^2.
     \end{equation}
\end{itemize}
\end{lemma}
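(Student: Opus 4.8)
\textbf{Proof proposal for Lemma \ref{lemetab}.}

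The plan is to read off both statements from the defining optimality condition of $x$ and the inexactness certificate \eqref{propyb}, unwinding the definitions \eqref{defsb} and \eqref{defetab}.

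First I would prove part (a). By \eqref{defxhatk}, $x$ is the unique minimizer of the strongly convex function $u \mapsto \Gamma(u) + \|u-x^-\|^2/(2\lam)$, so the first-order optimality condition gives $0 \in \partial \Gamma(x) + (x - x^-)/\lam$, i.e. $(x^- - x)/\lam \in \partial \Gamma(x)$. With $s$ as in \eqref{defsb} this is exactly $s \in \partial \Gamma(x)$, which by definition of the subdifferential is the claimed inequality \eqref{eq:subdifb} for every $u$.

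Next I would prove part (b). For the subgradient inclusion $s \in \partial_\eta \phi(y)$: fix $u \in \R^n$. Starting from $\phi(u) \ge \Gamma(u)$ (since $\Gamma \le \phi$), apply \eqref{eq:subdifb} to bound $\Gamma(u) \ge \Gamma(x) + \inner{s}{u-x}$, then rewrite $\inner{s}{u-x} = \inner{s}{u-y} + \inner{s}{y-x}$ and use the definition \eqref{defetab} of $\eta$ to substitute $\Gamma(x) + \inner{s}{y-x} = \phi(y) - \eta$. This yields $\phi(u) \ge \phi(y) + \inner{s}{u-y} - \eta$ for all $u$, i.e. $s \in \partial_\eta \phi(y)$. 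For the two-sided bound \eqref{ineqetab}: the left inequality $0 \le 2\lam\eta$ follows from the $\varepsilon$-subgradient inclusion just proved evaluated at $u = y$ (or directly, since $\eta = \phi(y) - [\Gamma(x)+\inner{s}{y-x}] \ge \phi(y) - \Gamma(y) \ge 0$ using \eqref{eq:subdifb} at $u=y$ and $\Gamma \le \phi$). For the right inequality, I would expand the minimum in \eqref{propyb}. Writing $s = (x^- - x)/\lam$, the quadratic $\Gamma(u) + \|u-x^-\|^2/(2\lam)$ at its minimizer $x$ equals $\Gamma(x) + \|x-x^-\|^2/(2\lam)$, and using $s\in\partial\Gamma(x)$ one gets the standard identity $\min_u\{\Gamma(u)+\|u-x^-\|^2/(2\lam)\} = \Gamma(x) + \|x^- - x\|^2/(2\lam)$, which can also be re-expanded (via $\|x-x^-\|^2 = \lam^2\|s\|^2$ and completing the square against $y$) into $\Gamma(x) + \inner{s}{y-x} + \|y-x^-\|^2/(2\lam) - \|y-x\|^2/(2\lam)$. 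Substituting this and the definition of $\eta$ into \eqref{propyb} gives
\[
\phi(y) + \frac{\chi}{2\lam}\|y-x^-\|^2 - \left(\phi(y) - \eta + \frac{1}{2\lam}\|y-x^-\|^2 - \frac{1}{2\lam}\|y-x\|^2\right) \le \varepsilon,
\]
which rearranges to $\eta \le \varepsilon - \|y-x\|^2/(2\lam) + (1-\chi)\|y-x^-\|^2/(2\lam)$; multiplying by $2\lam$ yields \eqref{ineqetab}.

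The only mildly delicate step is the algebraic identity for $\min_u\{\Gamma(u)+\|u-x^-\|^2/(2\lam)\}$ expressed as $\Gamma(x) + \inner{s}{y-x} + \|y-x^-\|^2/(2\lam) - \|y-x\|^2/(2\lam)$: this is just completing the square, using $x^- - x = \lam s$, but it is the place where a sign error is easiest to make, so I would carry it out carefully. Everything else is a direct unwinding of definitions.
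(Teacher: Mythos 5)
Your proposal is correct and follows essentially the same route as the paper: part (a) from the optimality condition of \eqref{defxhatk}, the inclusion in (b) by chaining $\phi \ge \Gamma$, \eqref{eq:subdifb}, and the definition of $\eta$, and the upper bound in \eqref{ineqetab} by substituting the minimum value $\Gamma(x) + \tfrac{1}{2\lam}\|x - x^-\|^2$ into \eqref{propyb} and completing the square with $x^- - x = \lam s$. The only cosmetic difference is that you re-expand the minimum into the $y$-centered form before substituting, while the paper substitutes first and then rearranges; the algebra is identical.
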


 \begin{proof} 
 (a) The optimality condition of \eqref{defxhatk} yields $0 \in \partial \Gamma_k(x) +(x-x^-)/\lambda$, which together with the definition of $s$ in \eqref{defsb} implies that the inclusion in a) holds.
 Relation \eqref{eq:subdifb} immediately follows from the inclusion in a) and the definition of the subdifferential.
 
(b) The relation $\phi \geq \Gamma$ (see step 1 of FSCO), \eqref{eq:subdifb}, the definition of $\eta$ in \eqref{defetab} imply that for every $u\in \dom \phi$,
	\[
	\phi(u) \ge \Gamma(u) \stackrel{\eqref{eq:subdifb}}\ge  \Gamma(x) + \inner{s}{u- x}
      \stackrel{\eqref{defetab}}=  \phi(y) + \inner{s}{u- y} - \eta,
	\]
which yields the inclusion in b). 
Taking $u=y$ in the above inequality gives $\eta \ge 0$, and hence the first inequality in \eqref{ineqetab} holds.
Using the definitions of $s$ and $\eta$ in \eqref{defsb} and \eqref{defetab}, respectively, and \eqref{propyb} and \eqref{defxhatk}, we have
\[
\begin{array}{lcl}
    \eta &\stackrel{\eqref{defetab}}=& \displaystyle \phi(y) - \Gamma(x) - \inner{s}{y- x} \\
     & \stackrel{\eqref{propyb},\eqref{defxhatk}}{\le} &\displaystyle \left[ \varepsilon - \frac\chi{2\lam}\|y- x^-\|^2 + \frac{1}{2\lam}\|x- x^- \|^2 \right] - \inner{s}{y- x}\\
	    &\stackrel{\eqref{defsb}}{=} &  \displaystyle \varepsilon - \frac\chi{2\lam}\|y- x^-\|^2  +  \frac1{2\lam}\|x- x^-\|^2 + \Inner{\frac{x- x^-}{\lam}}{y- x} \\
     &=&\displaystyle  \varepsilon + \frac{1-\chi}{2\lam}
	    \|y- x^- \|^2 - \frac1{2\lam}
	    \|y- x\|^2.
	\end{array}
\]
Hence, the second inequality in \eqref{ineqetab} holds.
\end{proof}

Using a technical result, Lemma \ref{lem:app} from Appendix \ref{subsec:tech-FSCO}, we are able to partially recover the intrinsic convexity $\mu_\phi$, even though it is not assumed to be known from either the objective $\phi$ or the model $\Gamma$.

\begin{lemma}\label{lem:zeta}
    Let $\mu=\mu_\phi$ and assume that $\Gamma \in \nConv{n}$ for some $\nu \in [0,\mu]$.
    For any $\zeta \in [0,1)$ and every $u\in \dom \phi$, we have
     \begin{equation}\label{qphi-1}
    \phi(u) \ge \phi(y) + \inner{s}{u- y} + \frac{\zeta (\mu-\nu)}{2} \|u- y\|^2 - \frac{\eta}{1-\zeta} + \frac{\zeta}{1-\zeta} \frac{\nu}{2}\|y - x\|^2 + \frac{\nu}{2}\|u - x\|^2,
	\end{equation}
    where $s$ and $\eta$ are as in \eqref{defsb} and \eqref{defetab}, respectively.
\end{lemma}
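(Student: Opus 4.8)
\textbf{Proof proposal for Lemma \ref{lem:zeta}.}

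The plan is to combine three ingredients: the $\eta$-subgradient inequality from Lemma \ref{lemetab}(b), the $\nu$-convexity of $\Gamma$, and the external technical result Lemma \ref{lem:app}, which is precisely the device designed to "recover" the portion $\mu - \nu$ of the intrinsic convexity that is invisible in both $\phi$ and $\Gamma$. First I would recall from Lemma \ref{lemetab}(b) that $s \in \partial_\eta \phi(y)$, so that for every $u \in \dom \phi$ we have $\phi(u) \ge \phi(y) + \inner{s}{u-y} - \eta$; but this is too weak since it carries no quadratic term. To sharpen it, I would invoke the $\mu$-convexity of $\phi$ directly: writing $\phi = (\phi - \mu\|\cdot\|^2/2) + \mu\|\cdot\|^2/2$ and using that $\phi - \mu\|\cdot\|^2/2$ is convex, one gets an inequality of the form $\phi(u) \ge \phi(y) + \inner{\tilde s}{u-y} + \frac{\mu}{2}\|u-y\|^2$ for an appropriate subgradient; the subtlety is that the available subgradient is $s \in \partial_\eta \phi(y)$, not an exact subgradient, so the bookkeeping of the $\eta$-slack against the recovered curvature is where Lemma \ref{lem:app} must enter.

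The main work is the interpolation argument encoded by the parameter $\zeta$. I expect Lemma \ref{lem:app} to be a statement allowing one to trade a quadratic penalty $\frac{\nu}{2}\|y-x\|^2$ at the auxiliary point $x$ (which is available because $s \in \partial\Gamma(x)$ with $\Gamma$ being $\nu$-convex, via Lemma \ref{lemetab}(a)) against the $\eta$-inexactness, at the cost of the weighting factors $\zeta/(1-\zeta)$ and $1/(1-\zeta)$ that appear in \eqref{qphi-1}. Concretely, from $s \in \partial\Gamma(x)$ and $\nu$-convexity of $\Gamma$ one has $\Gamma(u) \ge \Gamma(x) + \inner{s}{u-x} + \frac{\nu}{2}\|u-x\|^2$; combining with $\phi \ge \Gamma$ and the definition $\eta = \phi(y) - \Gamma(x) - \inner{s}{y-x}$ yields $\phi(u) \ge \phi(y) + \inner{s}{u-y} - \eta + \frac{\nu}{2}\|u-x\|^2$, which already produces the last term of \eqref{qphi-1} but still none of the $\mu - \nu$ curvature and has the full $\eta$ rather than $\eta/(1-\zeta)$. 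The improvement to $\frac{\zeta(\mu-\nu)}{2}\|u-y\|^2$ with the rescaled slack is exactly what Lemma \ref{lem:app} should deliver: I would apply it with the convex function $\phi - \mu\|\cdot\|^2/2$ (or equivalently exploit that $\phi$ is $\mu$-convex while only a $\nu$-subgradient-type certificate is on hand), the point pair $(x,y)$, the slack $\eta$, and the interpolation weight $\zeta$, then add back the $\frac{\zeta}{1-\zeta}\frac{\nu}{2}\|y-x\|^2$ and $\frac{\nu}{2}\|u-x\|^2$ terms that survive from the $\Gamma$-side estimate.

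The step I expect to be the main obstacle is matching the precise algebraic form of Lemma \ref{lem:app} to the quantities at hand — in particular, verifying that the "$x$" playing the role of the anchor point in that lemma is the same $x = \argmin\{\Gamma(u) + \|u-x^-\|^2/(2\lam)\}$ here, and that the hypotheses on the inexactness parameter (presumably the $0 \le 2\lam\eta \le \cdots$ bound from \eqref{ineqetab}, or just $\eta \ge 0$) are met. Once the correspondence is set up, the derivation of \eqref{qphi-1} should be a direct substitution followed by collecting terms, with no delicate estimation; the inequality is stated for all $\zeta \in [0,1)$ precisely because it is an identity-level consequence of the interpolation, so I would not expect any case analysis. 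I would close by noting that taking $\zeta = 0$ recovers the crude bound and that the useful regime is $\zeta$ bounded away from both endpoints, mirroring the discussion of $\chi$ after Proposition \ref{propouter}, and that this lemma will be the engine for the contraction inequality \eqref{ineq:contraction} once $u$ is specialized and the $\|y-x^-\|^2$ versus $\|x-x^-\|^2$ terms from \eqref{ineqetab} are folded in.
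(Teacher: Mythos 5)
Your proposal correctly identifies the right ingredients — Lemma \ref{lem:app}, the $\eta$-subgradient inclusion $s \in \partial_\eta\phi(y)$ from Lemma \ref{lemetab}(b), and the $\nu$-convexity of $\Gamma$ — but it misses the one technical construction that makes \eqref{qphi-1} work, and as written the plan would not deliver the stated inequality. You propose to apply Lemma \ref{lem:app} to $\phi$ directly (or to $\phi - \frac{\mu}{2}\|\cdot\|^2$, which is a slight misreading of that lemma's hypotheses) with the slack $\eta$, and then to ``add back'' the terms $\frac{\zeta}{1-\zeta}\frac{\nu}{2}\|y-x\|^2$ and $\frac{\nu}{2}\|u-x\|^2$ afterward. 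But applying Lemma \ref{lem:app} to $\phi$ with $\xi = \mu$, $v=s$, slack $\eta$, and $\tau = (1-\zeta)/\zeta$ yields precisely the paper's inequality \eqref{ineq:direct}, namely $\phi(u) \ge \phi(y) + \inner{s}{u-y} + \frac{\zeta\mu}{2}\|u-y\|^2 - \frac{\eta}{1-\zeta}$, which the paper explicitly proves is \emph{weaker} than \eqref{qphi-1}. There is no separate reservoir from which to ``add back'' the $\frac{\zeta}{1-\zeta}\frac{\nu}{2}\|y-x\|^2$ term — the $\Gamma$-side estimate you write down gives $\phi(u) \ge \phi(y)+\inner{s}{u-y}-\eta+\frac{\nu}{2}\|u-x\|^2$, which has no $\zeta$-weighting, and simply superimposing the two inequalities does not produce the claimed terms or the reduced slack $\eta/(1-\zeta)$ with the bonus $\frac{\zeta}{1-\zeta}\frac{\nu}{2}\|y-x\|^2$.

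The missing device is a coherent change of variables \emph{before} invoking Lemma \ref{lem:app}: define
\[
\tilde s = s + \nu(x^- - x), \quad \tilde\eta = \eta - \frac{\nu}{2}\|y-x\|^2, \quad \tilde\phi = \phi - \frac{\nu}{2}\|\cdot - x^-\|^2, \quad \tilde\Gamma = \Gamma - \frac{\nu}{2}\|\cdot - x^-\|^2,
\]
with the quadratic centered at $x^-$ (not at the origin and not with coefficient $\mu$). From Lemma \ref{lemetab}(a) one checks $\tilde s \in \partial\tilde\Gamma(x)$; since $\Gamma\in\nConv{n}$, the function $\tilde\Gamma$ is convex, $\tilde\phi\ge\tilde\Gamma$, and running the argument of Lemma \ref{lemetab}(b) with tildes in place gives $\tilde s \in \partial_{\tilde\eta}\tilde\phi(y)$ — in particular $\tilde\eta\ge 0$, which is not automatic and which your proposal does not address. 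Now $\tilde\phi$ is $(\mu-\nu)$-convex, so Lemma \ref{lem:app} with $(\psi,\xi,v,\eta) = (\tilde\phi,\mu-\nu,\tilde s,\tilde\eta)$ and $\tau = (1-\zeta)/\zeta$ yields an inequality for $\tilde\phi$, and substituting the definitions of $\tilde\phi$, $\tilde s$, $\tilde\eta$ and expanding the quadratics (the identity $\|u-x^-\|^2 - \|y-x^-\|^2 = \|u-y\|^2 + 2\inner{y-x^-}{u-y}$ is where the $\inner{\nu(x^--x)}{u-y}$ piece of $\tilde s$ gets absorbed) is exactly what produces the two extra $\nu$-terms and the $\mu-\nu$ coefficient. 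So your ``main obstacle'' paragraph correctly predicted that the matching of Lemma \ref{lem:app} to the data would be the crux; the resolution is this tilde shift, without which the derivation does not close.
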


\begin{proof}
    Define
\begin{equation}\label{eq:tilde}
    \tilde s = s + \nu(x^- - x), \quad \tilde \eta = \eta - \frac{\nu}{2}\|y - x\|^2, \quad \tilde \phi = \phi - \frac{\nu}{2}\|\cdot - x^-\|^2, \quad \tilde \Gamma = \Gamma - \frac{\nu}{2}\|\cdot - x^-\|^2.
\end{equation}
It follows from Lemma \ref{lemetab}(a) and the above definitions of $\tilde s$ and $\tilde \Gamma$ that
$\tilde s \in \partial \tilde \Gamma(x)$.
In view of \eqref{eq:tilde} and the assumption that $\Gamma \in \nConv{n}$ for some $\nu \in [0,\mu]$, we observe that $\tilde \Gamma$ is convex and $\tilde \phi \ge \tilde \Gamma$.
Hence, following a similar argument as in Lemma~\ref{lemetab}(b), we conclude that
$\tilde s \in \partial_{\tilde \eta} \tilde \phi(y)$.
Using this inclusion, the fact that $\tilde \phi$ is 
$(\mu -\nu)$-convex, and Lemma \ref{lem:app} with $(\psi,\xi,y,v,\eta,Q)=(\tilde \phi, \mu-\nu,y,\tilde s, \tilde \eta,I)$, we have for any $\tau\in (0,\infty]$ and every $u\in \dom \phi$,
\[
\tilde \phi(u) \stackrel{\eqref{ineq:app}}\ge \tilde \phi(y) + \inner{\tilde s}{u- y} + \frac{\mu -\nu}{2(1+\tau)} \|u- y\|^2 -  (1+\tau^{-1}) \tilde \eta.
\]
Taking $\tau=(1-\zeta)/\zeta$ for $\zeta \in [0,1)$ in the above inequality, we obtain
\[
     \tilde \phi(u) \ge \tilde \phi(y) + \inner{\tilde s}{u- y} + \frac{\zeta (\mu -\nu)}{2} \|u- y\|^2 -   \frac{\tilde \eta}{1-\zeta}.
	\]
Finally, \eqref{qphi-1} follows from using the definitions of $\tilde \phi$, $\tilde s$, and $\tilde \eta$ in \eqref{eq:tilde}, and rearranging the terms in the above inequality.
\end{proof}

Noting that in the proof of Lemma \ref{lem:zeta}, we apply Lemma~\ref{lem:app} to $\tilde \phi$ in \eqref{eq:tilde}. If we instead apply Lemma~\ref{lem:app} to $\phi$ directly, we obtain for every $u \in \dom \phi$,
\begin{equation}\label{ineq:direct}
    \phi(u) \ge \phi(y) + \inner{s}{u- y} + \frac{\zeta \mu}{2} \|u- y\|^2 - \frac{\eta}{1-\zeta}.
\end{equation}
This inequality is weaker than the one in \eqref{qphi-1}. Indeed, applying the Cauchy-Schwarz inequality, we have
\[
\left(\frac{1-\zeta}{\zeta} + 1\right)\left(\frac{\zeta}{1-\zeta} \frac{\nu}{2}\|y - x\|^2 + \frac{\nu}{2}\|u - x\|^2\right) \ge \frac\nu 2(\|y - x\|+ \|u - x\|)^2 \ge \frac\nu 2\|u - y\|^2,
\]
where the last inequality is due to the triangle inequality. Hence, we have
\[
\frac{\zeta}{1-\zeta} \frac{\nu}{2}\|y - x\|^2 + \frac{\nu}{2}\|u - x\|^2 \ge \frac{\zeta\nu}2 \|u - y\|^2,
\]
and thus \eqref{qphi-1} implies \eqref{ineq:direct}.


If $\zeta$ is chosen to be $0$, then \eqref{ineq:direct} becomes
\[
\phi(u) \ge \phi(y) + \inner{s}{u- y} - \eta \quad \forall u \in \dom \phi,
\]
which is equivalent to the inclusion in Lemma \ref{lemetab}(b). This means that if we use \eqref{ineq:direct}, then we cannot recover the intrinsic convex parameter $\mu_\phi$ when $\zeta=0$. However, even if $\zeta=0$, \eqref{qphi-1} still preserves a $\nu$-strongly convex lower approximation to $\phi$. This justifies the necessity of the assumption that $\Gamma \in \nConv{n}$.

Now we are ready to present the proof of Proposition \ref{propouter}.

\vgap
\noindent
{\bf Proof of Proposition \ref{propouter}.}
It follows from the definition of $s$ in \eqref{defsb} and Lemma \ref{lem:zeta} with $\zeta =\chi$,
\begin{align}
&\|x^--u \|^2  - \|x-u\|^2 -\left(\|x^--y\|^2 - \|x-y\|^2\right) 
= -2\lam \inner{s}{u-y} \nn \\
\stackrel{\eqref{qphi-1}}\ge & 2 \lambda \left[\phi(y)- \phi(u)\right] + \chi (\mu -\nu) \lam \|u-y\|^2 -   \frac{2\lam \eta}{1-\chi} + \frac{\chi \nu \lam}{1-\chi} \|y - x\|^2 + \nu \lam\|u - x\|^2. \label{ineq:long}
\end{align}
Rearranging the terms in \eqref{ineq:long} and using Lemma \ref{lemetab}(b), we have
\begin{align}
&\left\|x^--u \right\|^2  - (1+\nu \lam)\|x-u\|^2  -2 \lambda \left[\phi(y)- \phi(u)\right] \nn \\
& \stackrel{\eqref{ineq:long}}\ge \|x^--y\|^2 + \left( \frac{\chi \nu \lam}{1-\chi} - 1 \right)\|x-y\|^2 + \chi (\mu -\nu) \lam \|u-y\|^2 -   \frac{2\lam \eta}{1-\chi} \nn \\
& \stackrel{\eqref{ineqetab}}\ge  -\frac{2 \lam \varepsilon}{1-\chi} + \frac{\chi(1+\nu \lambda)}{1-\chi} \|x-y\|^2 + \chi (\mu -\nu) \lam \|u-y\|^2. \label{ineq:imp}
\end{align}
Rearranging the terms in the above inequality, we have
\begin{align}
    2\lambda \left[\phi(y)- \phi(u)\right] \le& 
\frac{2 \lam \varepsilon}{1-\chi} + \left\|x^--u \right\|^2  - (1+\nu \lam)\|x-u\|^2 \nn \\
&- \chi \left(\frac{1+\nu \lam}{1-\chi} \|x-y\|^2 + (\mu -\nu) \lam \|u-y\|^2\right). \label{telescopic1}
\end{align}
Using the triangle inequality and the fact that $(a_1+a_2)^2 \le (b_1^{-1}+b_2^{-1}) (a_1^2 b_1 + a_2^2 b_2)$ with $(a_1,a_2) =(\|x-y\|,\|u-y\|)$ and $(b_1,b_2) =((1+\nu \lam)/(1-\chi), (\mu -\nu) \lam)$
we have
\begin{equation}\label{ineqxku}
    \|x-u\|^2 \le \left(\frac{1-\chi}{1+\nu \lam} + \frac{1}{(\mu -\nu)\lam}\right) \left(\frac{1+\nu \lam}{1-\chi} \|x-y\|^2 + (\mu -\nu) \lam \|u-y\|^2\right).
\end{equation}
Plugging the above inequality into \eqref{telescopic1}, we have
\[
2\lambda \left[\phi(y)-\phi(u)\right] 
\le   \frac{2 \lam \varepsilon}{1-\chi} +  
\left\|x^--u\right\|^2  - 
\left[ 1 + \nu \lam + \chi \left(\frac{1-\chi}{1+\nu \lam} + \frac{1}{(\mu -\nu)\lam}\right)^{-1} \right]
\|x-u\|^2,
\]
which is the same as \eqref{ineq:contraction} after simplification.
\QEDA

\section{Proofs of Theorems \ref{thoutercs} and \ref{complprimal-2U-PBt}}\label{sec:pf1}

\subsection{Proof of Theorem \ref{thoutercs}}\label{subsec:proof-CS}

The following result shows that U-CS 
is an instance of FSCO and that
assumption (F1)
of Section~\ref{sec:FSCO} is satisfied.

\begin{proposition}\label{lem:ACSCS}
The following statements hold for U-CS:
    \begin{itemize}
    \item[a)] $\{\lam_k\}$ is a non-increasing sequence;
        \item[b)] for every $k\ge 1$, we have
        \begin{align}
		&\hat x_k =\underset{u\in  \R^n}\argmin
		\left\lbrace  \ell_f(u;\hat x_{k-1})+h(u) +\frac{1}{2\lam_{k}} \|u- \hat x_{k-1} \|^2 \right\rbrace, \label{eq:sub1cs} \\
		&f(\hat x_{k}) -\ell_f(\hat x_{k};\hat x_{k-1})+\frac{\chi-1}{2 \lambda_{k}}\|\hat x_{k}-\hat x_{k-1}\|^2 \le \frac{(1-\chi)\bar \varepsilon}2,
  \label{ineq:iterationcs}\\
   & \lam_k \ge \min \left \lbrace \frac{(1-\chi)^2 \bar \varepsilon}{8 M_f^2 + 2 \bar \varepsilon L_f}, \lam_0 \right\rbrace.\label{ineq:lamj2}
	    \end{align}

       \item[c)]     U-CS is a special case of FSCO where:
    \begin{itemize}
        \item [i)]
        $\hat y_k=\hat x_k$ and $\hat \Gamma_k(\cdot)=\ell_f(\cdot;\hat x_{k-1})+h(\cdot)$
    for every $k \ge 1$;
    \item[ii)] assumption (F1) is satisfied with
    $\underline \lam$ given by \eqref{ineq:lamj2}.
    \end{itemize}

    \end{itemize}
    \end{proposition}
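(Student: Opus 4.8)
\textbf{Proof plan for Proposition \ref{lem:ACSCS}.}

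The plan is to read off each claim directly from the definition of U-CS, tracking how $\lam$ and $\hat x_k$ are updated in steps 1--2. For part (a), I would observe that $\lam$ is only ever modified in step 2, where it is either halved (when the inequality in step 2 fails) or left unchanged (when the serious update $\lam_k=\lam$, $\hat x_k=x$ occurs). Hence the accepted values $\lam_1 \ge \lam_2 \ge \cdots$ form a non-increasing sequence, since each $\lam_k$ is obtained from $\lam_{k-1}$ by a (possibly empty) sequence of halvings. For part (b), \eqref{eq:sub1cs} is literally \eqref{def:x} evaluated with the accepted stepsize $\lam=\lam_k$ that produced $\hat x_k=x$; and \eqref{ineq:iterationcs} is exactly the inequality tested in step 2 (which holds because the serious update is only executed in the \textbf{else} branch), after moving the $(1-\chi)\|x-\hat x_{k-1}\|^2/(2\lam)$ term to the left-hand side and identifying $x=\hat x_k$, $\lam=\lam_k$.

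The main work is the lower bound \eqref{ineq:lamj2}. Here I would argue as follows: the line search in step 2 halves $\lam$ only when the step-2 inequality fails for the current $\lam$. By Lemma \ref{lem:lam}, that inequality is guaranteed to hold whenever $\lam \le (1-\chi)^2\bar\varepsilon/(4M_f^2 + \bar\varepsilon L_f)$. Therefore the last halving before an accepted $\lam_k$ can only have occurred while $\lam$ was still larger than this threshold, so after that halving we have $\lam_k > \tfrac12 \cdot (1-\chi)^2\bar\varepsilon/(4M_f^2 + \bar\varepsilon L_f) = (1-\chi)^2\bar\varepsilon/(8M_f^2 + 2\bar\varepsilon L_f)$. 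If no halving ever occurred (for this $k$ or any earlier one), then $\lam_k=\lam_0$. Combining the two cases gives $\lam_k \ge \min\{(1-\chi)^2\bar\varepsilon/(8M_f^2+2\bar\varepsilon L_f),\ \lam_0\}$, which is \eqref{ineq:lamj2}. I should be slightly careful that once a halving has brought $\lam$ below the threshold it never needs to be halved again, so the infimum over $k$ of $\lam_k$ is controlled by this same bound; monotonicity from part (a) makes this immediate.

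For part (c), I would verify that U-CS realizes the Black-box BB and hence FSCO with the stated identifications. Setting $\Gamma = \hat\Gamma_k = \ell_f(\cdot;\hat x_{k-1})+h$ and $y=x=\hat x_k$, relation \eqref{defxhatk} is \eqref{eq:sub1cs}, and $\Gamma \le \phi$ holds because $\ell_f(\cdot;\hat x_{k-1}) \le f$ by convexity of $f$. For the inexactness bound \eqref{propyb} with $\varepsilon=(1-\chi)\bar\varepsilon/2$: since $y=x$ minimizes $\Gamma(\cdot)+\|\cdot-\hat x_{k-1}\|^2/(2\lam_k)$, the left-hand side of \eqref{propyb} equals $\phi(y)-\Gamma(y) - (1-\chi)\|y-\hat x_{k-1}\|^2/(2\lam_k) = f(\hat x_k)-\ell_f(\hat x_k;\hat x_{k-1}) - (1-\chi)\|\hat x_k-\hat x_{k-1}\|^2/(2\lam_k)$, which is $\le (1-\chi)\bar\varepsilon/2$ by \eqref{ineq:iterationcs}. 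Thus each iteration of U-CS is a call to $\BB(\hat x_{k-1},\chi,(1-\chi)\bar\varepsilon/2)$, so U-CS is an instance of FSCO; and (F1) holds with $\underline\lam$ equal to the right-hand side of \eqref{ineq:lamj2} by part (b) together with part (a). The only mild subtlety I anticipate is bookkeeping the line-search iterations correctly so that the stepsize tested in step 2 is the same $\lam$ used in \eqref{def:x} to produce the accepted $x$ — but this is immediate from the structure of the loop (step 2 always returns to step 1 after halving).
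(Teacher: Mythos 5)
Your proposal is correct and follows essentially the same route as the paper: parts (a) and the first two relations of (b) are read off from the algorithm description, \eqref{ineq:lamj2} is derived from Lemma \ref{lem:lam} via the observation that the step-2 test can only fail (triggering a halving) when $\lam$ exceeds the threshold, and part (c) is the direct identification $\hat y_k=\hat x_k$, $\hat\Gamma_k=\ell_f(\cdot;\hat x_{k-1})+h$ with the BB conditions. The only difference is that you spell out the ``last halving'' bookkeeping and the algebraic verification of \eqref{propyb} more explicitly than the paper's terse proof, which is a mild clarification rather than a genuinely different argument.
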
    
    \begin{proof}
    a) This statement directly follows from the description of U-CS.
    
    b) Relations \eqref{eq:sub1cs} and \eqref{ineq:iterationcs} directly follow from the description of U-CS.
Next, we prove \eqref{ineq:lamj2}.
Supposing $\lam_{k-1}\le (1-\chi)^2 \bar \varepsilon/(4 M_f^2 + \bar \varepsilon L_f)$,
then it follows from Lemma \ref{lem:lam} that \eqref{ineq:iterationcs} holds with $\lam_k$ replaced by $\lambda_{k-1}$. This indicates that if $\lam$ is small enough, then it will remain unchanged. Therefore, following from the update scheme of $\lam$ in step 2 of U-CS, there is a lower bound as in \eqref{ineq:lamj2}.

c) Relations
  \eqref{eq:sub1cs}  and
     \eqref{ineq:iterationcs}
     are the analogues of
     relations \eqref{propyb} and  \eqref{defxhatk}
     of FSCO with 
$\hat y_k=\hat x_k$ and $\hat \Gamma_k$  as in (i).
Inequality \eqref{ineq:lamj2}
shows that Assumption (F1) is
satisfied. 
    \end{proof}

\vgap

\noindent
{\bf Proof of Theorem \ref{thoutercs}.}
Define 
\begin{equation}\label{defkbar}
\bar k=\left \lceil 2 \log \max\left\{\frac{ \lambda_0 (8M_f^2 + 2 \bar \varepsilon L_f) }{(1-\chi)^2 \bar \varepsilon},1\right\} \right \rceil.
\end{equation}
Observe that $\lambda_0/2^{k} \leq {\underline \lambda}$ for $k \geq \bar k$ where ${\underline{\lambda}}$ is given by the right-hand side of \eqref{ineq:lamj2},
and from Proposition \ref{lem:ACSCS} that we cannot halve $\lambda$ more than $\bar k$ iterations. 
It follows from \eqref{defkbar} and the definition of $ Q(\bar \varepsilon)$ in \eqref{defQf} that
\[
\bar k \leq  \displaystyle \left \lceil 
2 \log \frac{\lambda_0 Q({\bar \varepsilon})}{\bar \varepsilon} \right \rceil.
\]
Therefore, the second term on the right-hand side of \eqref{complprimal-2}
gives an upper bound on the number of iterations with backtracking of $\lambda$.
We now provide a bound on the number of remaining iterations to obtain a $\bar \varepsilon$-solution with U-CS.
In the $k$-th iteration of U-CS, the model $\hat \Gamma_k$ for $\phi=f+h$ is simply $\ell_f(\cdot,\hat x_{k-1})+h(\cdot)$ where $\ell_f$ is the linearization of $f$ as in \eqref{linfdef}, hence $\mu(\hat \Gamma_k) = \mu_h \le \mu_\phi$ and $\nu=\mu_h$ in \eqref{eq:nu}.
Since Proposition \ref{lem:ACSCS} shows that U-CS is a special case of FSCO, Theorem~\ref{thouter} immediately gives for U-CS the upper bound \eqref{complprimal} with ${\underline{\lambda}}$ given by \eqref{ineq:lamj2}
on the number of the remaining iterations
(where $\lambda$ is not halved)
required to find a $\bar \varepsilon$-solution 
of \eqref{eq:ProbIntro2}.
Using the inequality
\begin{equation}\label{upperlbar}
\frac{1}{\underline{\lam}} \stackrel{\eqref{ineq:lamj2}}=\max \left \lbrace \frac{8 M_f^2 + 2 \bar \varepsilon L_f}{(1-\chi)^2 \bar \varepsilon}, \frac{1}{\lambda_0} \right\rbrace
\leq \frac{8 M_f^2 + 2 \bar \varepsilon L_f}{(1-\chi)^2 \bar \varepsilon} + \frac{1}{\lambda_0}
\end{equation}
and the definition of $ Q(\bar \varepsilon)$ in \eqref{defQf},
we conclude that
$1/\underline{\lam} \le 
Q(\bar \varepsilon)/\bar \varepsilon$.
This observation and \eqref{complprimal} with ${\underline{\lambda}}$ given by \eqref{ineq:lamj2} thus imply that the first term in \eqref{complprimal-2} is an upper bound on the number of the remaining iterations
(where $\lambda$ is not halved).
This completes the proof.
\QEDA

\subsection{Proof of Theorem \ref{complprimal-2U-PBt}} \label{sec:proofupb}

We begin with a technical result, i.e., Lemma \ref{lem:tj}, 
uses arguments similar to ones
used in Subsection 4.2 of \cite{liang2023unified}. For the sake of completeness, its proof is given in Appendix~\ref{appendixupb}.
These two results together will be used in the proof of Lemma \ref{lemserious} 
to show that if stepsize $\lambda$ in a cycle
is sufficiently small, then the cycle must be a serious one.

\begin{lemma}\label{lem:tj}
If $j$ is an iteration of a  cycle 
such that $t_j>(1-\chi){\bar \varepsilon}/2$, then
\begin{equation}\label{ineq:tj-recur2}
     j-i < (1+u_\lambda)
     \log\left( \frac{4  t_i}{(1-\chi)\bar \varepsilon} \right),
     \end{equation}
     where $\lam$ is the prox stepsize for the cycle, $i$ is the first iteration of the cycle, $t_i$ is as in \eqref{ineq:hpe1}, and
     \begin{equation}\label{def:ulam}
         u_{\lambda}:=\frac{4\lam[2 M_f^2 + (1-\chi) \bar \varepsilon L_f]}{(1-\chi) \bar \varepsilon}.
     \end{equation}
     \end{lemma}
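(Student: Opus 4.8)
\textbf{Proof plan for Lemma \ref{lem:tj}.}
The plan is to track how the gaps $t_j$ decrease across null iterations within a single cycle, following the analysis of Subsection 4.2 of \cite{liang2023unified}. Throughout, fix a cycle with prox stepsize $\lambda$, first iteration index $i$, and prox-center $\hat x_{k-1}$. For a null iteration $j$, the bundle update rule \eqref{def:Gamma}--\eqref{def:bar Gamma} forces $m_f^+(\cdot)=f_{j+1}(\cdot)\ge \max\{\overline m_f(\cdot),\ell_f(\cdot;x_j)\}$, where $\overline m_f$ agrees with $f_j$ at $x_j$ and keeps $x_j$ as the minimizer of the associated prox subproblem. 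The first step is to exploit this, together with the fact that $x_{j+1}$ is the minimizer of $(f_{j+1}+h)(\cdot)+\|\cdot-\hat x_{k-1}\|^2/(2\lambda)$, to derive a recursion comparing $t_{j+1}$ to $t_j$. The standard argument gives an inequality of the form
\[
t_{j+1} \le t_j - \frac{1}{2(1+u_\lambda)}\cdot\frac{\|x_{j+1}-x_j\|^2}{2\lambda}\cdot(\text{something}),
\]
but the cleaner route (as in \cite{liang2023unified}) is to show directly that $t_{j+1}\le \frac{u_\lambda}{1+u_\lambda}\,t_j$, i.e. a geometric contraction with ratio $u_\lambda/(1+u_\lambda)$. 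The key ingredients here are: (i) $\ell_f(\cdot;x_j)\le f_{j+1}(\cdot)$, so the linearization of $f$ at $x_j$ is captured by the next model; (ii) the estimate \eqref{ineq:est} applied at $(x_{j+1},x_j)$ to bound $f(x_{j+1})-\ell_f(x_{j+1};x_j)$ by $2M_f\|x_{j+1}-x_j\|+\tfrac{L_f}{2}\|x_{j+1}-x_j\|^2$, which is where $M_f$, $L_f$, and hence $u_\lambda$ enter; and (iii) the bound $\|x_{j+1}-x_j\|\le D$ coming from assumption (A4) — though actually the cleaner contraction avoids $D$ and $D$ only appears later in $B$.

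The second step is to iterate the contraction. If $t_j>(1-\chi)\bar\varepsilon/2$ holds at iteration $j$ (meaning all of $t_i,t_{i+1},\dots,t_{j-1}$ also exceeded the threshold, so every intermediate iteration was a null update and the contraction applies at each step), then
\[
\frac{(1-\chi)\bar\varepsilon}{2} < t_j \le \left(\frac{u_\lambda}{1+u_\lambda}\right)^{j-i} t_i.
\]
Taking logarithms and using $\log\!\big((1+u_\lambda)/u_\lambda\big)=\log\!\big(1+1/u_\lambda\big)\ge 1/(1+u_\lambda)$ (the elementary inequality $\log(1+x)\ge x/(1+x)$ with $x=1/u_\lambda$) yields
\[
(j-i)\cdot\frac{1}{1+u_\lambda} \le (j-i)\log\!\left(1+\frac1{u_\lambda}\right) \le \log\!\left(\frac{2t_i}{(1-\chi)\bar\varepsilon}\right) \le \log\!\left(\frac{4t_i}{(1-\chi)\bar\varepsilon}\right),
\]
which rearranges to \eqref{ineq:tj-recur2}. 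I would need to double-check the factor of $2$ versus $4$ inside the logarithm; the statement has $4$, which may come from a slightly different (and easier to prove) contraction constant or from a harmless weakening, so I would aim for whatever contraction ratio makes the constant come out to exactly $u_\lambda$ as defined in \eqref{def:ulam}.

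The main obstacle I anticipate is establishing the precise one-step contraction $t_{j+1}\le \frac{u_\lambda}{1+u_\lambda}t_j$ with the exact constant $u_\lambda=4\lambda[2M_f^2+(1-\chi)\bar\varepsilon L_f]/((1-\chi)\bar\varepsilon)$. This requires carefully combining the strong convexity of the prox subproblem objective (which gives a gap between the objective value at $x_j$ and at $x_{j+1}$ controlled by $\|x_{j+1}-x_j\|^2/(2\lambda)$), the fact that $f_{j+1}\ge\ell_f(\cdot;x_j)$ and $f_{j+1}\ge\overline m_f$, the defining relations of $\overline m_f$ in \eqref{def:bar Gamma}, and the hybrid estimate \eqref{ineq:est} — and then optimizing over $\|x_{j+1}-x_j\|$ to absorb the linear term $2M_f\|x_{j+1}-x_j\|$ via Young's inequality, which is exactly what produces the $M_f^2$ term in $u_\lambda$. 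Since the problem statement says this mirrors Subsection 4.2 of \cite{liang2023unified} and defers the full proof to Appendix \ref{appendixupb}, I would port that argument verbatim, adjusting only for the $\chi$-dependent threshold $(1-\chi)\bar\varepsilon/2$ in place of the plain $\varepsilon$ used there, and for the fact that $t_j$ here is defined via the running minimum $\bar\phi_j$ in \eqref{def:txj2}, which only helps (since $\bar\phi_{j+1}\le\bar\phi_j$ makes $t_{j+1}$ no larger than it would be with $\phi(x_{j+1})$ alone).
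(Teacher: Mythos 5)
Your overall plan — derive a one-step contraction for $t_j$ across null iterations, iterate it, and then use the threshold $t_j>(1-\chi)\bar\varepsilon/2$ together with a logarithm estimate — matches the paper's structure, and you correctly anticipate that the $M_f^2$ term in $u_\lambda$ comes from absorbing the linear $2M_f\|x_{j+1}-x_j\|$ term via Young's inequality. But the one-step inequality you propose, the \emph{pure} geometric contraction $t_{j+1}\le \tfrac{u_\lambda}{1+u_\lambda}t_j$, does not hold in the hybrid $(M_f,L_f)$ setting, and this is a genuine gap, not merely a constant to adjust. The culprit is precisely the Young step: bounding $M_f^2\|x_{j+1}-x_j\|^2 - M_f\varepsilon\|x_{j+1}-x_j\| \ge -\varepsilon^2/4$ introduces an unavoidable \emph{additive} error term (not a multiplicative loss), so the recursion you actually get is the \emph{affine} contraction
\[
t_{j+1}-\frac{(1-\chi)\bar\varepsilon}{4}\ \le\ \tau\left(t_j-\frac{(1-\chi)\bar\varepsilon}{4}\right),\qquad \tau=\frac{u_\lambda}{1+u_\lambda},
\]
which is exactly what the paper's Lemma~\ref{lembdj} proves, with fixed point $c=(1-\chi)\bar\varepsilon/4$. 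This is also where your uncertain ``2 versus 4'' comes from: the $4$ is the reciprocal of the fixed-point constant $c$, whereas a $2$ would only appear if the pure geometric contraction were true. Once you replace your contraction with the affine one, the rest of your argument goes through with a small modification: iterating gives $t_j-c\le\tau^{j-i}(t_i-c)\le\tau^{j-i}t_i$, the hypothesis $t_j>2c$ ensures the left side is $>c>0$, so $\tau^{-(j-i)}<t_i/c=4t_i/((1-\chi)\bar\varepsilon)$, and your logarithm step (equivalently, the paper's $\log x\le x-1$) gives \eqref{ineq:tj-recur2}. Your remark that $\bar\phi_{j+1}\le\bar\phi_j$ ``only helps'' is correct — the paper uses it in the last inequality of Lemma~\ref{lembdj} — and you are also right that (A4) plays no role here (it enters only in Lemma~\ref{lem:tj2}(b)).
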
    

Noting that the right-hand side of \eqref{ineq:tj-recur2} depends on $t_i$, the next result provides two upper bounds
on this quantity,
one that does not require (A4) but only  holds for  $\lam$ sufficiently small,
and one that
requires (A4) and holds for an arbitrary $\lam>0$.
The proof of the first one is well-known for $\chi=0$ (e.g., Lemma 5.5 in \cite{liang2024single})
but not for the second one. The proofs for both of them are given in Appendix \ref{appendixupb}.

\begin{lemma}\label{lem:tj2}
 Let $i$ denote the first iteration of a cycle of U-PB. The following statements about the quantity $t_i$ hold:
 \begin{itemize}
\item[a)] if  $\lambda \leq (1-\chi)/(2{  L_f})$, then
\begin{equation}\label{ineq:ti}
    t_i \le \frac{4\lam  M_f^2}{1-\chi};
\end{equation}
\item[b)] if  (A4) holds, then
\begin{equation}\label{ineq:ti2}
    t_i \le \frac{\lam(16 M_f^2 + L_f^2 D^2)}{4(1-\chi)}.
\end{equation}
 \end{itemize}
	\end{lemma}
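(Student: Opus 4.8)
\textbf{Proof proposal for Lemma~\ref{lem:tj2}.}
The plan is to bound $t_i$ directly from its definition in \eqref{ineq:hpe1}, specialized to the first iteration $i$ of a cycle. At that iteration, $N=0$ has just been reset and a fresh model $f_i$ with $\ell_f(\cdot;\hat x_{k-1}) \le f_i \le f$ has been installed, so in particular $f_i(x_i) \ge \ell_f(x_i;\hat x_{k-1})$. Moreover, since $N=0$ at iteration $i$, the first branch of \eqref{def:txj2} gives $\bar\phi_i = \phi(x_i) + \chi\|x_i - \hat x_{k-1}\|^2/(2\lam)$. Substituting these two facts into \eqref{ineq:hpe1} and recalling $\phi = f+h$, I would obtain
\begin{align*}
t_i &= \bar\phi_i - \left( (f_i+h)(x_i) + \frac{1}{2\lam}\|x_i - \hat x_{k-1}\|^2 \right) \\
&\le f(x_i) - \ell_f(x_i;\hat x_{k-1}) - \frac{1-\chi}{2\lam}\|x_i - \hat x_{k-1}\|^2,
\end{align*}
where the cancellation of $h(x_i)$ and the combination of the two quadratic terms is routine. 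This reduces the lemma to bounding the right-hand side, which is exactly the quantity controlled in Lemma~\ref{lem:lam} and by \eqref{ineq:est}.

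For part (a), I would apply \eqref{ineq:est} with $(u,v)=(x_i,\hat x_{k-1})$ to get $f(x_i) - \ell_f(x_i;\hat x_{k-1}) \le 2M_f\|x_i - \hat x_{k-1}\| + (L_f/2)\|x_i - \hat x_{k-1}\|^2$. Writing $r := \|x_i - \hat x_{k-1}\|$, the bound on $t_i$ becomes $t_i \le 2M_f r - (1-\chi-\lam L_f)r^2/(2\lam)$. The hypothesis $\lam \le (1-\chi)/(2L_f)$ gives $1-\chi-\lam L_f \ge (1-\chi)/2 > 0$, so the quadratic-in-$r$ upper bound is maximized at an interior point; maximizing $2M_f r - cr^2$ over $r$ (with $c=(1-\chi-\lam L_f)/(2\lam) \ge (1-\chi)/(4\lam)$) yields at most $M_f^2/c \le 4\lam M_f^2/(1-\chi)$, which is exactly \eqref{ineq:ti}. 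This is essentially the same ``$a^2+b^2\ge 2ab$'' completion-of-square argument already used in the proof of Lemma~\ref{lem:lam}.

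For part (b), the point is that without the smallness restriction on $\lam$ the quadratic term $-(1-\chi-\lam L_f)r^2/(2\lam)$ may have the wrong sign, so I cannot rely on the interior maximum. Instead I would use (A4): $r = \|x_i - \hat x_{k-1}\| \le D$ since both points lie in $\dom h$. The cleanest route is to go back to $t_i \le f(x_i) - \ell_f(x_i;\hat x_{k-1}) - (1-\chi)r^2/(2\lam) \le f(x_i) - \ell_f(x_i;\hat x_{k-1})$ (dropping the nonpositive quadratic term), and then bound $f(x_i) - \ell_f(x_i;\hat x_{k-1})$ via \eqref{ineq:est} by $2M_f r + (L_f/2)r^2$ and split $2M_f r \le 2\lam M_f^2/(1-\chi) + (1-\chi)r^2/(2\lam)$ using Young's inequality, so that the $(1-\chi)r^2/(2\lam)$ term cancels the one I kept, leaving $t_i \le 2\lam M_f^2/(1-\chi) + (L_f/2)r^2 \le 2\lam M_f^2/(1-\chi) + L_f D^2/2$. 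To match \eqref{ineq:ti2} exactly one wants the second term also expressed with a factor $\lam$; here I would need to be a little more careful about which terms get absorbed — perhaps instead using the bound $L_f r^2/2 \le \lam L_f^2 D^2 /(8(1-\chi)) + (1-\chi)r^2/(2\lam) \cdot (\text{something})$, or simply re-deriving the split so that all surviving terms carry a $\lam$. The main obstacle is therefore purely bookkeeping: choosing the Young's-inequality weights so that the coefficient structure $\lam(16M_f^2 + L_f^2 D^2)/(4(1-\chi))$ comes out precisely, rather than a merely order-correct bound. Once the weights are pinned down, part (b) follows by the same elementary steps.
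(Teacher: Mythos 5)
Your reduction of $t_i$ to $t_i \le f(x_i)-\ell_f(x_i;\hat x_{k-1}) - \frac{1-\chi}{2\lam}\|x_i-\hat x_{k-1}\|^2$ and your proof of part (a) are correct and coincide with the paper's argument. Part (b), however, is not finished, and neither of the two routes you sketch actually works. Your first attempt drops the $-\frac{1-\chi}{2\lam}r^2$ term and then re-inserts it via Young's inequality only to cancel it, so the entire ``quadratic budget'' $\frac{1-\chi}{2\lam}r^2$ gets consumed absorbing $2M_f r$, leaving $\frac{L_f}{2}r^2 \le \frac{L_f D^2}{2}$ with no factor of $\lam$. Your second sketch, $\frac{L_f}{2}r^2 \le \frac{\lam L_f^2 D^2}{8(1-\chi)} + \frac{1-\chi}{2\lam}r^2$, would need a second, independent copy of that same quadratic budget, which you do not have: the coefficient in the bound on $t_i$ is $\frac{1-\chi}{2\lam}$, not $\frac{1-\chi}{\lam}$.

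The missing move is to apply (A4) at the level of a \emph{single} power of $r$, turning the quadratic piece into a linear one: $\frac{L_f}{2}r^2 \le \frac{L_f D}{2}\,r$. This yields $t_i \le \left(2M_f+\frac{L_f D}{2}\right)r - \frac{1-\chi}{2\lam}r^2$, a concave quadratic in $r$ with one combined linear coefficient and the same negative curvature as in part (a). Maximizing once over $r$ (the same $b_0^2/(4a_0)$ step you used in part (a)) gives $t_i \le \left(2M_f+\frac{L_f D}{2}\right)^2\frac{\lam}{2(1-\chi)}$, and $(a+b)^2\le 2(a^2+b^2)$ produces exactly $\frac{\lam(16M_f^2+L_f^2D^2)}{4(1-\chi)}$. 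So the issue is not mere weight-tuning: you must \emph{pool} the two linear-in-$r$ contributions before spending the quadratic budget, rather than trying to spend it separately on each, which is what both of your sketches implicitly do.
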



Recall that U-PB once in a while performs reset cycles, i.e., cycles which perform exactly $\overline N$ inner iterations and terminate without any of their iterates satisfying the condition $t_j \le (1-\chi)\bar \varepsilon/2$.
If the parameter pair $(M_f,L_f)$ is known, then the result below shows that, if $\lam$ is chosen sufficiently small, then no reset cycle is performed.
On the other hand, if  $(M_f,L_f)$ is not known, then  U-PB performs a line search on $\lam$ and may generate reset cycles.
Reset cycles are thrown away by U-PB, but they signal that the current $\lam$ is too large and hence should be decreased (see step 3.b of U-PB).

As already observed in the first remark in the second paragraph following U-PB,
U-CS is a special case of U-PB in which  $\overline N=1$.
In this regard,
the following technical result is a generalization of Lemma \ref{lem:lam}
to the context
of U-PB.
\begin{lemma}\label{lemserious} 
Assume that assumption (A4) holds. If for some cycle of U-PB, 
 the prox stepsize $\lambda$  satisfies
\begin{equation}\label{condlambda}
 \lam \le 
 \frac{(1-\chi)^2 {\overline  N} \bar \varepsilon}{2({  M}_f^2 + \bar \varepsilon {  L}_f) B } 
\end{equation}
where $B $ is as in \eqref{defa}, then the cycle must be a serious one.
As a consequence, if the initial prox stepsize $\lam_0$ is chosen less than or equal to the right-hand side of \eqref{condlambda}, then U-PB has the following properties: a) it keeps $\lam$ constant throughout its execution; and b) it never performs a reset update.
\end{lemma}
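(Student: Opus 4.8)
\textbf{Proof plan for Lemma \ref{lemserious}.}
The plan is to show that if $\lambda$ satisfies \eqref{condlambda}, then within $\overline N$ inner iterations of the cycle some iterate $x_j$ achieves $t_j \le (1-\chi)\bar\varepsilon/2$, which by the definition of the updates in step 3 forces a serious update rather than a reset update. The natural tool is the contrapositive combined with Lemmas \ref{lem:tj} and \ref{lem:tj2}(b). Suppose, for contradiction, that the cycle is a reset cycle; then $t_j > (1-\chi)\bar\varepsilon/2$ holds for \emph{all} inner iterations $j$ of the cycle, and in particular for its last iteration $j$, which satisfies $j - i = \overline N - 1$ (since $N$ is incremented to $\overline N$). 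Apply Lemma \ref{lem:tj} to this last iteration to get
\[
\overline N - 1 = j - i < (1 + u_\lambda)\log\left(\frac{4 t_i}{(1-\chi)\bar\varepsilon}\right),
\]
where $u_\lambda$ is as in \eqref{def:ulam}.

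Next I would bound the two factors on the right using the hypothesis \eqref{condlambda} on $\lambda$. First, since \eqref{condlambda} in particular implies $\lambda \le (1-\chi)^2\overline N\bar\varepsilon/(2\bar\varepsilon L_f B) \le (1-\chi)/(2L_f)$ (using $\overline N \ge 1$, $\chi < 1$, and $B \ge 8$), and also $\lambda$ is small relative to $M_f^2$, one gets from \eqref{def:ulam} that $u_\lambda = 4\lambda[2M_f^2 + (1-\chi)\bar\varepsilon L_f]/((1-\chi)\bar\varepsilon) = O(1)$; more precisely, plugging \eqref{condlambda} in gives $u_\lambda \le \overline N/B \cdot (\text{const})$, something like $u_\lambda \le 4\overline N/B$ after a short calculation. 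Second, bound $t_i$ using Lemma \ref{lem:tj2}(b) (assumption (A4) is in force): $t_i \le \lambda(16 M_f^2 + L_f^2 D^2)/(4(1-\chi))$, and substituting \eqref{condlambda} for $\lambda$ turns $4 t_i/((1-\chi)\bar\varepsilon)$ into an expression bounded by something of the form $\overline N(16 M_f^2 + L_f^2 D^2)/(2(M_f^2 + \bar\varepsilon L_f)(1-\chi)^2 B) \cdot (\text{const})$; since $16 M_f^2 + L_f^2 D^2 \le 16(M_f^2 + \bar\varepsilon L_f)(1 + L_f^2 D^2/(16 M_f^2))$ (crudely), the logarithm of this is controlled by $B/12$ thanks to the $12\log(1 + L_f^2 D^2 \overline N/(16 M_f^2))$ term in the definition \eqref{defa} of $B$. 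The point of the specific constants $8$ and $12$ in $B$ is precisely to make these two bounds combine so that $(1+u_\lambda)\log(4t_i/((1-\chi)\bar\varepsilon)) \le \overline N - 1$, contradicting the displayed inequality above.

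For the consequence, I would argue as follows. If $\lambda_0$ is at most the right-hand side of \eqref{condlambda}, then on the first cycle $\lambda = \lambda_0$ satisfies \eqref{condlambda}, so by the first part it is a serious cycle, which keeps $\lambda$ unchanged (step 3.c does not alter $\lambda$, only reset updates in step 3.b do). By induction, every cycle starts with the same $\lambda = \lambda_0$ satisfying \eqref{condlambda}, hence every cycle is serious and $\lambda$ never changes; in particular no reset update is ever performed, giving (a) and (b).

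The main obstacle I anticipate is the bookkeeping of constants in the two estimates so that the sum $(1 + u_\lambda)\log(\cdots)$ lands below $\overline N - 1$: one has to be careful that the crude upper bound $16 M_f^2 + L_f^2 D^2 \le 16(M_f^2 + \bar\varepsilon L_f) + L_f^2 D^2$ interacts correctly with the $\overline N/(16 M_f^2)$ scaling inside the logarithm in \eqref{defa}, and that the $u_\lambda$ factor (which can be as large as a small multiple of $\overline N/B$) does not spoil the bound — this is why $B$ has both an additive constant ($8$) absorbing the "$1+$" and the $u_\lambda$ contribution and a multiplicative $12$ in front of the logarithm, with a little slack. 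A secondary subtlety is making sure the edge case $\overline N = 1$ (where U-PB reduces to U-CS) is handled, but there Lemma \ref{lem:lam} already covers it, and indeed $\overline N - 1 = 0$ makes the contradiction immediate since the right-hand side is positive.
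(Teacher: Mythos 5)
Your overall contradiction structure matches the paper's: assume the cycle is a reset cycle, so it has exactly $\overline N$ iterations all with $t_j > (1-\chi)\bar\varepsilon/2$, apply Lemma~\ref{lem:tj} at the last one to get $\overline N - 1 < (1+u_\lambda)\log\bigl(4t_i/((1-\chi)\bar\varepsilon)\bigr)$, then derive a contradiction by bounding the right-hand side by $\overline N - 1$. The consequence about $\lambda_0$ is also argued correctly.

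However, there is a genuine gap in the main step. You use only Lemma~\ref{lem:tj2}(b) to bound $t_i$, whereas the paper splits into two regimes, $8\overline N \le B$ and $8\overline N > B$, and uses Lemma~\ref{lem:tj2}(a) in the first one. That split is essential, not cosmetic. Your proposed chain gives roughly $1 + u_\lambda \le 1 + 4\overline N/B$ and $\log\bigl(4t_i/((1-\chi)\bar\varepsilon)\bigr) \lesssim B/12$ (since the $D$-dependent part of $t_i$ is what $B$ is built to absorb). For the product to be at most $\overline N - 1$ you would then need $B/12 + \overline N/3 \lesssim \overline N - 1$, i.e.\ $B \lesssim 8\overline N$; but when $8\overline N \le B$ this is exactly what fails. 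Concretely, take $\overline N = 2$ and $L_f$ tiny but $D$ enormous so that $B$ is, say, $1000$: then $1 + u_\lambda$ cannot be pushed below $1$ and, with only the $D$-dependent bound \eqref{ineq:ti2}, $\log\bigl(4t_i/((1-\chi)\bar\varepsilon)\bigr)$ is of order $(B-8)/12 \approx 80$, so the product is around $80 \gg \overline N - 1 = 1$ and the contradiction never materializes. The paper avoids this by observing that in the regime $8\overline N \le B$ condition \eqref{condlambda} already forces $\lambda \le (1-\chi)/(2L_f)$, so the $D$-free bound of Lemma~\ref{lem:tj2}(a) applies, giving $4t_i/((1-\chi)\bar\varepsilon) \le 8\overline N M_f^2/((M_f^2+\varepsilon L_f)B) \le 1$ and hence a nonpositive logarithm, making the inequality trivial; only in the complementary regime $8\overline N > B$ does it invoke Lemma~\ref{lem:tj2}(b) together with the algebra $1 + u_\lambda \le 12\overline N/B$ and $\log(\cdots) \le B(\overline N-1)/(12\overline N)$. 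Your write-up needs both arms of the case split, and in particular the appeal to Lemma~\ref{lem:tj2}(a) when $B$ dominates $\overline N$, to close.
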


\begin{proof} 
Assume for contradiction that the cycle is not a serious one and its stepsize $\lam$ satisfies \eqref{condlambda}.
Then, this cycle is a reset cycle and hence performs exactly
$\overline N$ iterations.
Let $i$ denote the first iteration of the cycle.
In this proof, to alleviate notation, we denote
$\varepsilon=(1-\chi)\bar \varepsilon/2$.
Then by 
inequality \eqref{ineq:tj-recur2} in 
Lemma
\ref{lem:tj} written for $j=i+{\overline N}-1$, we must have $\overline N-1 < (1+u_{\lambda})
     \log (2  t_i/\varepsilon)$
 where $u_\lambda$ is as in \eqref{def:ulam}.
Hence, to obtain a contradiction and prove the lemma, it suffices to show that 
\begin{equation}\label{defutildeine2}
(1+u_{\lambda}) \log \frac{2 t_i}{\varepsilon} \leq {\overline N}-1.
\end{equation}
To show the above inequality, 
we consider the following two cases: a)
$8\overline N \le  B$
and b)
$8\overline N>B $.

a) Assume that $8\overline N \le  B $. The assumption \eqref{condlambda} and the fact that $\varepsilon=(1-\chi)\bar \varepsilon/2$ then
imply that
\[
\lambda \leq 
 \frac{(1-\chi) \varepsilon}{8(M_f^2 +\varepsilon L_f)}
\leq \frac{1-\chi}{8 L_f}
\]
and hence that the assumption of Lemma 
\ref{lem:tj2}(a) holds.
Thus, statement (a) of this same lemma implies that 
\begin{equation}\label{lemti}
\frac{2 t_i}{\varepsilon} \stackrel{\eqref{ineq:ti}}{\le} \frac{8\lam   M_f^2 }{(1-\chi)\varepsilon} 
\stackrel{\eqref{condlambda}}{\leq}  
 \frac{8M_f^2 \overline N}{(M_f^2 + \varepsilon L_f) B }
 \le
 \frac{M_f^2} {M_f^2 + \varepsilon L_f} \leq 1
\end{equation}
where the second last inequality follows from the case assumption 
$8{\overline N} \le  B $.
Since the above inequality implies that
$\log(2 t_i/\varepsilon) \leq 0$, inequality
 \eqref{defutildeine2}
 immediately holds for case a).

b) Assume now that
$  8{\overline N} >  B $.
To show  \eqref{defutildeine2}, 
we first bound $1+u_\lambda$ from above as follows:
\begin{align}
1+u_\lambda &  
\stackrel{\eqref{def:ulam}}{=} 1+\frac{4\lam({  M}_f^2 + \varepsilon {  L}_f)  }{\varepsilon}  \stackrel{\eqref{condlambda}}{\leq} 
 1+\frac{4(1-\chi)\overline N } {B } \nn\\
&\leq \frac{8\overline N}{B } + \frac{4\overline N}{B } = \frac{12\overline N}{B } \label{bound1pu}
\end{align}
where we also use the fact that $\varepsilon=(1-\chi)\bar \varepsilon/2$ in the first line and the second last inequality is due to the case assumption $8{\overline N}>B$.
Now observe that condition \eqref{condlambda}
on $\lambda$
implies
\begin{align}\label{condtilupp}
\frac{2t_i}{\varepsilon}
\stackrel{\eqref{ineq:ti2}}{\leq} 
 \frac{8   M_f^2 + {  L}_f^2 D^2/2}{(1-\chi) \varepsilon} \lambda &
  \stackrel{\eqref{condlambda}}{\le} \frac{8   M_f^2 + {  L}_f^2 D^2/2}{(1-\chi)\varepsilon}  \left[\frac{(1-\chi){\overline N} \varepsilon}{({  M}_f^2 + \varepsilon {  L}_f) B}\right]  
  \le \frac{
  (  M_f^2 + {L}_f^2 D^2/16)\overline N}{ M_f^2+\varepsilon L_f }
  \leq  1 + \frac{  {L}_f^2 D^2 \overline N}{16{ M}_f^2 } \nn 
  \end{align}
  where the second last inequality follows from the fact that $B  \geq 8$.
The above inequality and the definition of $B $ in \eqref{defa} then imply that
  \begin{equation}\label{finalcasebok}
  \log \frac{2t_i}{\varepsilon} \le
  \log \left(1 + \frac{  {L}_f^2 D^2 \overline N}{16{ M}_f^2 } \right) \stackrel{\eqref{defa}}= \frac{B -8}{12} \
  \leq \frac{B -B /\overline N}{12} =   \frac{B (\overline N-1)}{12\overline N}
 \end{equation}
where the last inequality follows from the case assumption $8{\overline N}>B$.
Finally, combining 
\eqref{bound1pu} and \eqref{finalcasebok}, we conclude that 
\eqref{defutildeine2} also holds in case b).
\end{proof}


The next result, which is the analogue of Proposition \ref{lem:ACSCS}, shows that the sequence of stepsizes $\lambda_k$ is bounded below by some positive constant and provides an upper bound on the number of reset cycles.

\begin{proposition}\label{lem1:U-PB}
The following statements hold for U-PB:
    \begin{itemize}
        \item[a)] every cycle in U-PB has at most $\overline N$ inner iterations;
        \item[b)] each stepsize $\lambda_k$ generated by U-PB satisfies 
  \begin{equation}\label{ineq:lamjag}
\lambda_k \geq 
\frac{\overline N \bar\varepsilon}{U(\bar \varepsilon)}
        \end{equation}  
        where $U(\bar \varepsilon)$ is as in \eqref{defU};
\item[c)] the number of reset cycles is upper bounded by
\begin{equation}\label{upperlambda}
\displaystyle \left \lceil 
2 \log \frac{\lambda_0 U(\bar \varepsilon)}{\overline N \bar \varepsilon} \right \rceil.
\end{equation}
    \end{itemize}
\end{proposition}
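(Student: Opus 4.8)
The plan is to prove the three statements of Proposition~\ref{lem1:U-PB} in order, leaning primarily on Lemma~\ref{lemserious} and the structure of the line search in U-PB.

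\textbf{Statement (a).} This is essentially immediate from the description of U-PB. The counter $N$ is incremented by one at the start of step~2 of each inner iteration and is reset to $0$ at step~3.d (which is reached at the end of every serious or reset cycle). The test in step~3 forces a null update only when $N<\overline N$; once $N=\overline N$, the algorithm performs either a reset or a serious update and then resets $N$. Hence every cycle (whether serious or reset) consists of at most $\overline N$ inner iterations. I would just spell this bookkeeping out in one short paragraph.

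\textbf{Statement (b).} Here I would argue exactly as in the proof of \eqref{ineq:lamj2} in Proposition~\ref{lem:ACSCS}(b). Lemma~\ref{lemserious} states that whenever a cycle's prox stepsize $\lam$ satisfies \eqref{condlambda}, that cycle must be serious, so U-PB does not halve $\lam$ (a halving only occurs at a reset update, step~3.b). Therefore once $\lam$ drops at or below the right-hand side of \eqref{condlambda} it never decreases again; since $\lam$ is only ever divided by $2$, the smallest value it can reach is at least half the right-hand side of \eqref{condlambda}, i.e.
\[
\lam_k \ \ge\ \min\left\{ \lam_0,\ \frac{(1-\chi)^2\overline N\bar\varepsilon}{4(M_f^2+\bar\varepsilon L_f)B}\right\}.
\]
Then I would observe that both terms in this minimum are bounded below by $\overline N\bar\varepsilon/U(\bar\varepsilon)$: indeed, from the definition \eqref{defU} of $U(\bar\varepsilon)$ we have $U(\bar\varepsilon)\ge \overline N\bar\varepsilon/\lam_0$, giving $\lam_0\ge \overline N\bar\varepsilon/U(\bar\varepsilon)$, and also $U(\bar\varepsilon)\ge 4(M_f^2+\bar\varepsilon L_f)B/(1-\chi)^2$, which rearranges to the bound on the second term. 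This yields \eqref{ineq:lamjag}. The only mild subtlety is handling the ``$(M_f,L_f)$ unknown'' case cleanly — but Lemma~\ref{lemserious} is stated for an arbitrary cycle, so the argument does not actually need to know these parameters.

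\textbf{Statement (c).} A reset cycle is the only event that halves $\lam$, so the number of reset cycles equals the number of times $\lam$ is halved over the whole run. By statement (b), $\lam$ never drops below $\overline N\bar\varepsilon/U(\bar\varepsilon)$, while it starts at $\lam_0$; since each halving multiplies $\lam$ by $1/2$, the number of halvings is at most $\log_2\big(\lam_0 U(\bar\varepsilon)/(\overline N\bar\varepsilon)\big) = \log_2 e \cdot \log\big(\lam_0 U(\bar\varepsilon)/(\overline N\bar\varepsilon)\big)$, which is in turn at most $\lceil 2\log(\lam_0 U(\bar\varepsilon)/(\overline N\bar\varepsilon))\rceil$ since $2>\log_2 e$. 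This gives \eqref{upperlambda}.

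\textbf{Main obstacle.} There is no serious obstacle: the heavy lifting is entirely in Lemma~\ref{lemserious} (and behind it Lemmas~\ref{lem:tj} and \ref{lem:tj2}), which we are entitled to assume. The only thing requiring a bit of care is the arithmetic matching the two terms $\lam_0$ and $(1-\chi)^2\overline N\bar\varepsilon/(4(M_f^2+\bar\varepsilon L_f)B)$ against $\overline N\bar\varepsilon/U(\bar\varepsilon)$ via the definition \eqref{defU}, and making sure the constant $2$ (rather than $\log_2 e$) in the ceiling is justified; both are routine.
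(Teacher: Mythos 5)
Your proposal is correct and follows essentially the same route as the paper: part (a) by bookkeeping on the counter $N$, part (b) by combining Lemma~\ref{lemserious} with the halving rule to get $\lambda_k \ge \min\{\lambda_0, (1-\chi)^2\overline N\bar\varepsilon/(4(M_f^2+\bar\varepsilon L_f)B)\}$ and then relating this to $U(\bar\varepsilon)$, and part (c) by counting halvings between $\lambda_0$ and the lower bound from (b). The only cosmetic difference is in (b), where you compare each term of the $\min$ directly against $\overline N\bar\varepsilon/U(\bar\varepsilon)$ while the paper instead takes reciprocals and bounds the resulting $\max$ by the sum; these are equivalent one-line computations.
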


\begin{proof}
a) This statement immediately follows from the description of U-PB (see its step 3).

b) It follows from Lemma \ref{lemserious} and the update rule for $\lambda$ in U-PB (see its step 3) that
\[
        \lam_k \ge \min\left\{\frac{(1-\chi)^2 {\overline  N} \bar \varepsilon}{4(M_f^2 + \bar \varepsilon L_f) B }, \lam_0\right\},
        \]
and hence that
        \[
        \frac{1}{\lam_k} \le \max\left\{\frac{4(M_f^2 + \bar \varepsilon L_f) B }{(1-\chi)^2 {\overline  N} \bar \varepsilon}, \frac1{\lam_0}\right\} \le \frac{4(M_f^2 + \bar \varepsilon L_f) B }{(1-\chi)^2 {\overline  N} \bar \varepsilon} + \frac1{\lam_0}.
        \]
Thus, using the definition of $U(\bar \varepsilon)$ in \eqref{defU}, we conclude that
        \[
        \frac{\overline N \bar \varepsilon}{\lam_k} \le \frac{4(M_f^2 + \bar \varepsilon L_f) B }{(1-\chi)^2} + \frac{ \overline N \bar \varepsilon}{\lam_0} = U(\bar \varepsilon),
        \]
and hence that statement b) holds.

c) This statement immediately follows from b) and the update rule of $\lam$ in U-PB.
 \end{proof}

\if{
Suppose that there exists $\lam_k < \underline{\lam}$
and assume wlog that
$k$ is the smallest one. Then
$\lam_{k-1} \ge \underline{\lam}$.
The contradiction comes by showing that
$\lam_k = \lam_{k-1}$.
}\fi

The following result shows
that serious iterations of U-PB
generate sequences $\{\hat x_k\}$,
$\{\hat y_k\}$, $\{\lambda_k\}$,
and $\{\hat \Gamma_k\}$ satisfying the
requirements of FSCO.

        \begin{proposition}\label{lem1:U-PB1}
    U-PB is a special case of FSCO in the sense that:
   \begin{itemize} 
   \item[a)] the quadruple $(\hat x_k,\hat y_k,\hat \Gamma_k,\lam_k)$  satisfies relations 
   \eqref{propyb} and
\eqref{defxhatk} with $x^{-}=\hat x_{k-1}$;
\item[b)] condition (F1) used in the analysis of FSCO is satisfied.
\end{itemize}
\end{proposition}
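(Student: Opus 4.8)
The plan is to verify that U-PB, when restricted to its serious cycles, matches the structure of FSCO step-by-step, using the technical machinery (Lemmas \ref{lem:tj}, \ref{lem:tj2}, \ref{lemserious}) already established. First I would address part (a). Fix a serious cycle $k$ ending at iteration index $j$, so that $\hat x_k = x_j$, $\hat y_k = y_j$, $\hat \Gamma_k = f_j + h$, and $\lambda_k = \lambda$. The prox-subproblem \eqref{def:xj} defining $x_j$ is exactly \eqref{defxhatk} of BB with $x^- = \hat x_{k-1}$, $\Gamma = f_j + h$, and $\lambda = \lambda_k$; since $f_j \le f$ by the invariant maintained in steps 0 and 3.d and by the BU specification \eqref{def:Gamma}, we have $\hat \Gamma_k = f_j + h \le f + h = \phi$, as required. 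So \eqref{defxhatk} holds immediately.

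Next I would establish \eqref{propyb} for $(\hat x_k, \hat y_k, \hat \Gamma_k, \lambda_k)$ with $\varepsilon = (1-\chi)\bar\varepsilon/2$. By definition \eqref{ineq:hpe1}, the serious-update test $t_j \le (1-\chi)\bar\varepsilon/2$ that terminated the cycle reads
\[
\bar\phi_j - \left((f_j+h)(x_j) + \frac{1}{2\lambda}\|x_j - \hat x_{k-1}\|^2\right) \le \frac{(1-\chi)\bar\varepsilon}{2}.
\]
Since $x_j$ solves \eqref{def:xj}, the bracketed term equals $\min_{u}\{(f_j+h)(u) + \frac{1}{2\lambda}\|u-\hat x_{k-1}\|^2\} = \min_u\{\hat\Gamma_k(u) + \frac{1}{2\lambda_k}\|u-\hat x_{k-1}\|^2\}$, which is exactly the minimum appearing in \eqref{propyb}. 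It then remains to check that $\bar\phi_j \ge \phi(\hat y_k) + \frac{\chi}{2\lambda_k}\|\hat y_k - \hat x_{k-1}\|^2$; in fact, by the definition \eqref{def:txj2} of $\bar\phi_j$ as a running minimum together with \eqref{eq:yj-intuition}, one has $\bar\phi_j = \phi(y_j) + \frac{\chi}{2\lambda}\|y_j - \hat x_{k-1}\|^2$ (this is precisely the content of the $y_j$-selection rule — $y_j$ realizes the running minimum). Substituting gives \eqref{propyb} with equality replaced by $\le$, so part (a) is done.

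For part (b), condition (F1) requires a uniform lower bound $\underline\lambda > 0$ on $\{\lambda_k\}$. This is exactly the content of Proposition \ref{lem1:U-PB}(b): the stepsizes satisfy $\lambda_k \ge \overline N \bar\varepsilon / U(\bar\varepsilon)$ for all $k$, where $U(\bar\varepsilon) > 0$ is the finite quantity defined in \eqref{defU}. So (F1) holds with $\underline\lambda = \overline N\bar\varepsilon/U(\bar\varepsilon)$, and part (b) follows immediately.

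The main obstacle, such as it is, lies in part (a): one must be careful that the index $j$ used to define $\hat x_k, \hat y_k, \hat\Gamma_k$ at a serious update is the same index entering the subproblem \eqref{def:xj} and the gap test \eqref{ineq:hpe1}, and that the running-minimum bookkeeping in \eqref{def:txj2} genuinely yields $\bar\phi_j = \phi(y_j) + \chi\|y_j - \hat x_{k-1}\|^2/(2\lambda)$ rather than merely an upper bound — this is what licenses reading the serious-update inequality as \eqref{propyb}. Everything else is a direct matching of notation, and part (b) is an immediate consequence of the already-proved Proposition \ref{lem1:U-PB}(b).
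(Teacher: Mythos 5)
Your proof is correct and takes essentially the same approach as the paper: identify the quadruple $(\hat x_k,\hat y_k,\hat\Gamma_k,\lambda_k)$ from the serious update in step 3.c, observe that $\hat x_k=x_j$ minimizing \eqref{def:xj} gives \eqref{defxhatk}, and read the passed test $t_j\le(1-\chi)\bar\varepsilon/2$ as \eqref{propyb} via $\bar\phi_j=\phi(y_j)+\chi\|y_j-\hat x_{k-1}\|^2/(2\lambda)$ and the fact that $(f_j+h)(x_j)+\|x_j-\hat x_{k-1}\|^2/(2\lambda)$ equals the minimum in \eqref{propyb}; part (b) is a direct citation of Proposition \ref{lem1:U-PB}(b). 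Your version is slightly more explicit than the paper's about the invariant $f_j\le f$ (hence $\hat\Gamma_k\le\phi$) and the two implicit equalities, but these are just the details the paper leaves unspoken.
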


\begin{proof} a) For a serious cycle, the pair $(\hat x_k,\hat y_k)$ of U-PB satisfies
\begin{align}
	&\hat x_k = \underset{u\in\R^n}\argmin \left\lbrace \hat \Gamma_k(u) + \frac{1}{2\lam_k}\|u- \hat x_{k-1} \|^2 \right\rbrace, \label{defxhatk2}\\
        &\phi(\hat y_{k})+\frac{\chi}{2 \lambda_{k}}\|\hat y_{k}-\hat x_{k-1}\|^2 - \left[\hat \Gamma_k(\hat x_k) + \frac{1}{2\lam_k}\|\hat x_k- \hat x_{k-1} \|^2\right] \le \frac{(1-\chi)\bar \varepsilon}2.
        \label{propyb2}  
\end{align}
Indeed, relations \eqref{defxhatk2} and \eqref{propyb2}
(which are \eqref{defxhatk} and \eqref{propyb} in FSCO, respectively) follow from \eqref{def:xj} and $t_{j} \leq (1-\chi)\bar \varepsilon/2$ with 
    $\hat x_k = x_{j}$, $\hat \Gamma_k=f_j+h$, $\hat y_k=y_j$, and $\lambda_k=\lambda$ (see the serious update in step 3.c of U-PB).

b) Condition (F1) is satisfied with
$\underline \lam = \overline N \bar\varepsilon / U(\bar \varepsilon)$,
in view of Proposition \ref{lem1:U-PB}(b).
\end{proof}

\vgap

\noindent
\textbf{Proof of Theorem \ref{complprimal-2U-PBt}} To alleviate notation, denote 
$\varepsilon=(1-\chi)\bar \varepsilon/2$ and $(\mu,\nu)=(\mu_\phi,\mu_h)$.
In view of step 3.c of U-PB, we have $\mu(\hat  \Gamma_k) = \mu_h = \nu$ and hence it is the same as the $\nu$ in \eqref{eq:nu}.
Noting that, by Proposition~\ref{lem1:U-PB1}, the sequences indexed by $k$ is a special implementation of FSCO, and that,  by the proof of Proposition~\ref{lem1:U-PB}(b), (F1) holds with $\underline \lam = \overline N \bar\varepsilon / U(\bar \varepsilon)$, it follows from Theorem \ref{thouter} that the total number of serious cycles generated by U-PB
is bounded by
\[
\min\left\{\min\left[ \frac{1}{\chi}\left(1+  \frac{U({\bar \varepsilon})}{\mu \bar \varepsilon \overline N} \right),  1+\frac{U({\bar \varepsilon})}{\nu \bar \varepsilon \overline N}\right]
\log \left(1 + \frac{\mu d_0^2}{\bar \varepsilon}\right) \, , \, \frac{d_0^2 U({\bar \varepsilon})}{\bar \varepsilon^2 \overline N} \right\}.
\]
The conclusion of the theorem follows from the above bound and statements (a) and (c) of Proposition~\ref{lem1:U-PB}.
\QEDA


\section{Concluding remarks} \label{sec:conclusion}

In this paper, we presented two $\mu_\phi$-universal methods, namely U-CS and U-PB, to solve HCO \eqref{eq:ProbIntro2}. 
We propose FSCO to analyze both methods in a unified manner and establish functional  complexity bounds. 
We then prove that both U-CS and U-PB are instances of FSCO and apply the complexity bounds for FSCO to obtain iteration complexities for the two methods.
The two proposed methods are completely universal with
respect to all problem parameters, in particular, the strong convexity. Moreover, they do not require knowledge of the optimal value, and do not rely on any restart or parallelization schemes (such as the ones used in \cite{renegar2022simple}).


Some papers about universal methods (see for example \cite{lan2015bundle,nesterov2015universal}) assume that, for some $\alpha \in [0,1]$, $f$ in \eqref{eq:ProbIntro2} has $\alpha$-Hölder continuous gradient, i.e.,  there exists $L_\alpha \ge 0$ such that
    $\|\nabla f(x) - \nabla f(y)\| \le L_\alpha \|x-y\|^\alpha$
for every $x,y \in \dom h$.
It is shown in \cite{nesterov2015universal} that the universal primal gradient method proposed on it
(i.e., the U-CS method with $\chi=0$) finds a $\bar \varepsilon$-solution of \eqref{eq:ProbIntro2} in 
\begin{equation}\label{bound:UPG}
    \tilde {\cal O}\left( \frac{d_0^2 L_\alpha^{\frac{2}{\alpha+1}}}{\bar \varepsilon^{\frac{2}{\alpha+1}}}\right)
\end{equation}
iterations.
This result also follows as a consequence of our results in this paper. Indeed, first note that the dominant term in the iteration complexity \eqref{complprimal-2} for the U-CS method is $\tilde {\cal O}(d_0^2(M_f^2 + \bar \varepsilon L_f)/\bar \varepsilon^2)$.
Second, Proposition 2.1 of \cite{liang2023unified} implies
that there exists a pair $(M_f,L_f)$ satisfying (A2) and the inequality
\[
M_f^2 + \bar \varepsilon L_f \le 2 \bar \varepsilon^{\frac{2\alpha}{\alpha+1}} L_\alpha^{\frac{2}{\alpha + 1}}.
\]
Hence, it follows from these two observations  that  \eqref{complprimal-2} is  sharper than the bound \eqref{bound:UPG} obtained in \cite{nesterov2015universal}.



We finally discuss some possible extensions of our analysis in this paper.
First, it is shown in Theorem~\ref{thoutercs}  (resp., Theorem~\ref{complprimal-2U-PBt})
that U-CS (resp., U-PB) is $\mu_\phi$-universal if $\chi>0$ and is $\mu_h$-universal if $\chi=0$. It would be interesting to investigate whether they are also 
$\mu_\phi$-universal for $\chi=0$. Note that this question is related to whether
the universal primal gradient of \cite{nesterov2015universal} (which is the same as U-CS with $\chi=0$) is $\mu_\phi$-universal.
Second, it would also be interesting to study whether the general results obtained for the FSCO framework can also be used to show
that other methods for solving the HCO 
problem \eqref{eq:ProbIntro2} are $\mu_\phi$-universal.
Third, a more challenging question is whether a 
$\mu_\phi$-universal method for solving \eqref{eq:ProbIntro2} with a checkable termination condition (and hence which does not depend on $\phi_*$) can be developed.
Finally, this work has only dealt with unaccelerated methods for solving \eqref{eq:ProbIntro2}. It would be interesting to develop
accelerated methods such as those for when $\mu=0$ (e.g., in  \cite{lan2015bundle,nesterov2015universal}), which are $\mu_\phi$-universal in the hybrid setting of this paper.\\

\par {\textbf{Data Availability statement.}} There is no data related to this publication.



\bibliographystyle{plain}
\bibliography{ref}

\appendix

\section{Technical results}\label{sec:appendix}

This appendix contains two subsections. Subsection \ref{subsec:tech-FSCO}
presents some technical results used in the analysis of FSCO while Subsection \ref{appendixupb}
proves Lemmas \ref{lem:tj} and \ref{lem:tj2}.

\subsection{Miscellaneous technical results}\label{subsec:tech-FSCO}

The following technical result is used in the proof of Lemma \ref{lem:zeta}.

\begin{lemma}\label{lem:app}
Assume that $\xi>0, \psi \in \bConv{n}$ and $Q \in \mathcal{S}_{++}^n$ are such that $\psi-$ $(\xi / 2)\|\cdot\|_Q^2$ is convex and let $(y, v, \eta) \in \R^n \times \R^n \times \R_{+}$be such that $v \in \partial_\eta \psi(y)$. Then, for any $\tau>0$,
    \begin{equation}\label{ineq:app}
        \psi(u) \ge \psi(y)+\inner{v}{u-y}-\left(1+\tau^{-1}\right) \eta+\frac{(1+\tau)^{-1} \xi}{2}\|u-y\|_Q^2 \quad \forall u \in \R^n .
    \end{equation}
\end{lemma}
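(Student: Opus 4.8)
The plan is to reduce \eqref{ineq:app} to the defining inequality of the $\eta$-subdifferential applied at a cleverly chosen auxiliary point, and then optimize over that point. Set $\varphi(\cdot) := \psi(\cdot) - (\xi/2)\|\cdot - y\|_Q^2$; by hypothesis $\psi - (\xi/2)\|\cdot\|_Q^2$ is convex, and since $(\xi/2)\|\cdot - y\|_Q^2$ differs from $(\xi/2)\|\cdot\|_Q^2$ by an affine function, $\varphi$ is convex as well. The inclusion $v \in \partial_\eta \psi(y)$ says $\psi(w) \ge \psi(y) + \inner{v}{w - y} - \eta$ for all $w$; since $\psi(y) = \varphi(y)$ and $(\xi/2)\|w-y\|_Q^2 \ge 0$, this gives $\varphi(w) \ge \varphi(y) + \inner{v}{w-y} - \eta$, i.e. $v \in \partial_\eta \varphi(y)$ too. (Actually one only needs this weaker fact, or one can keep the quadratic; either route works — I would keep it simple and work with $\psi$ directly.)

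First I would fix $u \in \R^n$ and, for a parameter $\beta \in (0,1)$ to be chosen, apply the $\eta$-subgradient inequality at the point $w = y + \beta(u - y) = (1-\beta)y + \beta u$:
\[
\psi\bigl((1-\beta)y + \beta u\bigr) \ge \psi(y) + \beta \inner{v}{u - y} - \eta.
\]
Next I would bound the left-hand side from above using the strong convexity of $\psi$ along the segment: since $\psi - (\xi/2)\|\cdot\|_Q^2$ is convex, $\psi$ satisfies
\[
\psi\bigl((1-\beta)y + \beta u\bigr) \le (1-\beta)\psi(y) + \beta \psi(u) - \frac{\xi\,\beta(1-\beta)}{2}\|u - y\|_Q^2.
\]
Combining the two displays, cancelling $(1-\beta)\psi(y)$ and one copy of $\psi(y)$, and dividing through by $\beta$ yields
\[
\psi(u) \ge \psi(y) + \inner{v}{u-y} - \frac{\eta}{\beta} + \frac{(1-\beta)\xi}{2}\|u-y\|_Q^2.
\]
Finally I would substitute $\beta = 1/(1+\tau) \in (0,1)$ (valid for $\tau > 0$), so that $1/\beta = 1 + \tau$ — wait, that gives $1+\tau$, not $1+\tau^{-1}$; so instead I choose $\beta = \tau/(1+\tau)$, giving $1/\beta = 1 + \tau^{-1}$ and $1 - \beta = 1/(1+\tau)$, which matches \eqref{ineq:app} exactly.

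There is no real obstacle here; the only point requiring a little care is the direction of the two inequalities and the bookkeeping of the parameter $\beta$ versus $\tau$ so that the coefficients $1+\tau^{-1}$ and $(1+\tau)^{-1}$ come out correctly — picking $\beta = \tau/(1+\tau)$ rather than its reciprocal partner is the thing to get right. One should also note the edge case $u = y$, where the claimed inequality reduces to $0 \ge -(1+\tau^{-1})\eta$, which holds since $\eta \ge 0$; the argument above covers it automatically since the convex-combination step is trivial when $u=y$. The matrix $Q$ plays no active role beyond defining the norm $\|\cdot\|_Q$, so the proof is identical to the Euclidean case with $\|\cdot\|$ replaced by $\|\cdot\|_Q$ throughout.
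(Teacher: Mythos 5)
Your proof is correct. The paper itself does not supply an argument for this lemma --- it simply cites Lemma~A.1 of Melo et al.\ (2023) --- so there is no in-house proof to compare against, but your derivation is a clean and self-contained replacement. The mechanism you use (evaluate the $\eta$-subgradient inequality at the interpolated point $w=(1-\beta)y+\beta u$, bound $\psi(w)$ from above by $\xi$-strong convexity along the segment, cancel and divide by $\beta$, then set $\beta=\tau/(1+\tau)$ so that $1/\beta=1+\tau^{-1}$ and $1-\beta=(1+\tau)^{-1}$) is exactly the standard route for results of this type and is almost certainly what the cited reference does. Two minor remarks: the opening paragraph introducing $\varphi(\cdot)=\psi(\cdot)-(\xi/2)\|\cdot-y\|_Q^2$ is a detour you then abandon and could simply be deleted; and in step two you implicitly use the identity $\alpha\|x\|_Q^2+(1-\alpha)\|y\|_Q^2-\|\alpha x+(1-\alpha)y\|_Q^2=\alpha(1-\alpha)\|x-y\|_Q^2$, which is worth stating explicitly since it is the only place the quadratic structure (as opposed to mere convexity) of $\|\cdot\|_Q^2$ enters.
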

\begin{proof}
    The proof of this result can be found in Lemma A.1 in \cite{melo2023proximal}.
\end{proof}

The next technical result is used in the proof of Theorem \ref{thouter}.

\begin{lemma} \label{lm:easyrecur1}
		Assume that sequences $\{\gamma_j\}$, $\{\eta_j\}$, and  $\{\alpha_j\}$ satisfy for every $j\ge 1$, $\gamma_j \ge {\underline \gamma}$ and 
		\beq \label{eq:easyrecur1}
		\gamma_j \eta_j \le \alpha_{j-1} - (1+\sigma)\alpha_j + \gamma_j \delta
		\eeq
  for some $\sigma \ge 0$, $\delta \ge 0$ and ${\underline \gamma} > 0$. Then, the following statements hold:
	\begin{itemize}
            \item[a)] for every $k\ge 1$,
            \begin{equation}\label{ineq:eta}
            \min_{1\le j \le k}  \eta_j \le \frac{\alpha_0 - (1+\sigma)^k \alpha_k}{\sum_{j=1}^k (1+\sigma)^{j-1} \gamma_j} + \delta;
        \end{equation}
		\item[b)]  if the sequence $\{\alpha_j\}$ is nonnegative, then $ \min_{1\le j \le k} \eta_j \le 2\delta $ for every $ k\ge 1$ such that
		\[
		k \ge \min \left\lbrace  \frac{1+\sigma}{\sigma} \log\left( \frac{\sigma \alpha_0}{{\underline \gamma}\delta} + 1 \right), \frac{\alpha_0}{{\underline \gamma}\delta} \right\rbrace
		\]
		with the convention that the first term is equal to the second term
		when $\sigma=0$. (Note  that the first term converges to the second term
		as $\sigma \downarrow 0$.)
	\end{itemize}
\end{lemma}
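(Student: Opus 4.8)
The plan is to prove (a) by a weighted telescoping argument and then derive (b) from (a) by elementary estimates, with no essential difficulty expected.

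For part (a), I would multiply the recursion \eqref{eq:easyrecur1} by the nonnegative factor $(1+\sigma)^{j-1}$ and sum over $j=1,\dots,k$. On the right-hand side, the terms $(1+\sigma)^{j-1}\alpha_{j-1}-(1+\sigma)^{j}\alpha_j$ telescope (after the index shift) to $\alpha_0-(1+\sigma)^k\alpha_k$, giving
\[
\sum_{j=1}^k (1+\sigma)^{j-1}\gamma_j\eta_j \;\le\; \alpha_0-(1+\sigma)^k\alpha_k + \delta\sum_{j=1}^k (1+\sigma)^{j-1}\gamma_j .
\]
Since each weight $(1+\sigma)^{j-1}\gamma_j$ is positive and $\eta_j\ge \min_{1\le i\le k}\eta_i$ for every $j$, the left side is at least $\bigl(\min_{1\le j\le k}\eta_j\bigr)\sum_{j=1}^k (1+\sigma)^{j-1}\gamma_j$; dividing through by $\sum_{j=1}^k (1+\sigma)^{j-1}\gamma_j>0$ yields \eqref{ineq:eta}.

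For part (b), I would first use $\alpha_k\ge 0$ to drop the term $-(1+\sigma)^k\alpha_k$ in \eqref{ineq:eta}, so that $\min_{1\le j\le k}\eta_j\le 2\delta$ reduces to the single inequality $\sum_{j=1}^k (1+\sigma)^{j-1}\gamma_j\ge \alpha_0/\delta$, which, using $\gamma_j\ge\underline\gamma$, holds whenever $\underline\gamma\sum_{j=1}^k (1+\sigma)^{j-1}\ge \alpha_0/\delta$. I would then check that each of the two quantities in the $\min$ is on its own a sufficient threshold for $k$: (i) since $(1+\sigma)^{j-1}\ge1$ we have $\sum_{j=1}^k (1+\sigma)^{j-1}\ge k$, so $k\ge \alpha_0/(\underline\gamma\delta)$ suffices; (ii) when $\sigma>0$, the geometric sum equals $((1+\sigma)^k-1)/\sigma$, so the required inequality becomes $(1+\sigma)^k\ge 1+\sigma\alpha_0/(\underline\gamma\delta)$, i.e. $k\log(1+\sigma)\ge\log\!\bigl(1+\sigma\alpha_0/(\underline\gamma\delta)\bigr)$, and invoking $\log(1+\sigma)\ge \sigma/(1+\sigma)$ shows $k\ge \tfrac{1+\sigma}{\sigma}\log\!\bigl(1+\sigma\alpha_0/(\underline\gamma\delta)\bigr)$ suffices. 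Because $k\ge\min\{A,B\}$ forces $k\ge A$ or $k\ge B$, and either one alone gives $\min_{1\le j\le k}\eta_j\le 2\delta$, the stated bound follows; the $\sigma=0$ case is handled by (i), consistently with the stated convention since $\tfrac{1+\sigma}{\sigma}\log\!\bigl(1+\sigma\alpha_0/(\underline\gamma\delta)\bigr)\to \alpha_0/(\underline\gamma\delta)$ as $\sigma\downarrow0$.

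I do not anticipate a real obstacle; the argument is entirely elementary. The only points needing a little care are: keeping the telescoping sum honest after the index shift so that it collapses exactly to $\alpha_0-(1+\sigma)^k\alpha_k$; applying the concavity estimate $\log(1+\sigma)\ge\sigma/(1+\sigma)$ to move from the exact geometric-series threshold to the cleaner bound $\tfrac{1+\sigma}{\sigma}\log(\cdots)$; and observing that both arguments of the $\min$ are independently sufficient, so that passing to their minimum is legitimate.
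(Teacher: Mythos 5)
Your proposal is correct and follows essentially the same route as the paper: part (a) is the same weighted telescoping argument, and part (b) relies on the same key estimate $\log(1+\sigma)\ge\sigma/(1+\sigma)$ (the paper phrases it as $1+\sigma\ge e^{\sigma/(1+\sigma)}$) together with the lower bound $\sum_{j=1}^k(1+\sigma)^{j-1}\ge k$. The only difference is presentational — you work forward from each candidate threshold to verify it suffices, whereas the paper packages the two bounds into a single $\min$ expression and then reads off the conclusion — but the underlying estimates are identical.
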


\begin{proof}
	a) Multiplying \eqref{eq:easyrecur1} by $ (1+\sigma)^{j-1} $ and summing the resulting inequality from $ j=1 $ to $ k $, we have
	\begin{align}
	    \sum_{j=1}^k (1+\sigma)^{j-1} \gamma_j \left[ \min_{1\le j \le k}  \eta_j \right] &\le \sum_{j=1}^k (1+\sigma)^{j-1} \gamma_j \eta_j
     \le  \sum_{j=1}^k (1+\sigma)^{j-1} \left(  \alpha_{j-1} - (1+\sigma) \alpha_j + \gamma_j \delta \right) \nn \\
		 &=  \alpha_0 - (1+\sigma)^k \alpha_k +
		\sum_{j=1}^k (1+\sigma)^{j-1}\gamma_j \delta. \label{ineq:sum}
	\end{align}
        Inequality \eqref{ineq:eta} follows immediately from the above inequality.

        
        b) It follows from \eqref{ineq:eta}, and the facts that $\alpha_k\ge 0$ and $\gamma_j \ge {\underline \gamma}$ that
        \begin{equation}\label{ineq:eta1}
            \min_{1\le j \le k}  \eta_j  \le \frac{\alpha_0}{{\underline \gamma} \sum_{j=1}^k (1+\sigma)^{j-1}} + \delta.
        \end{equation}
	Using the fact that $ 1+\sigma \ge e^{\sigma/(1+\sigma)} $
	for every $\sigma \ge 0$, 
	we have
	\begin{equation}\label{ineq:theta}
	    \sum_{j=1}^{k} (1+\sigma)^{j-1} = \max \left\lbrace \frac{(1+\sigma)^k-1}{\sigma}, k \right\rbrace \ge \max\left\lbrace \frac{e^{\sigma k /(1+\sigma)} - 1}{\sigma}, k\right\rbrace.
	\end{equation}
	Plugging the above inequality into \eqref{ineq:eta1}, we have for every $ k\ge 1 $,
	\[
	\min_{1\le j \le k}  \eta_j \le \frac{\alpha_0}{{\underline \gamma}}\min \left\lbrace \frac{\sigma}{e^{\sigma k /(1+\sigma)} - 1}, \frac 1k \right\rbrace + \delta,
	\]
	which can be easily seen to imply (b).
\end{proof}

The following technical result is used in the analysis of Section \ref{sec:FSCO}.

\begin{lemma}\label{lem:sigma}
  For every $\chi \in [0,1)$, $\lam>0$, $\mu>0$, and $\nu \in [0,\mu]$, the 
  quantity $\sigma=\sigma(\mu,\nu;\lambda)$ defined in \eqref{eq:sigma} satisfies the following statements:
  \begin{itemize}
      \item[a)] $\lam [ \chi  \mu + (1-\chi)  \nu] \le \sigma \le \lam \mu$, and hence $\sigma \ge \lambda \nu$;
      \item[b)] the function $\lam >0 \mapsto \sigma(\mu,\nu;\lambda)$ is non-decreasing;
      \item[c)] there holds
      \[
      \frac{1+\sigma}{\sigma} \le \min\left\{ \frac{1}{\chi}\left(1+\frac{1}{\lam \mu}\right),  1+\frac{1}{\lam \nu}\right\}.
      \]
  \end{itemize}
\end{lemma}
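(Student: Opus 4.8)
\textbf{Proof plan for Lemma \ref{lem:sigma}.}

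The plan is to treat the three statements in sequence, since (b) and (c) lean on the bounds established in (a). First I would unpack the definition \eqref{eq:sigma}, writing
\[
\sigma = \frac{\lam\big[\nu(1+\mu\lam)+\chi(\mu-\nu)\big]}{1+\mu\lam+\chi\lam(\nu-\mu)} = \lam \cdot \frac{\nu + \chi(\mu-\nu) + \mu\nu\lam}{1 + (1-\chi)\mu\lam + \chi\nu\lam},
\]
where I have expanded $1+\mu\lam+\chi\lam(\nu-\mu) = 1 + (1-\chi)\mu\lam + \chi\nu\lam$ in the denominator; note this denominator is strictly positive since $\nu\le\mu$ forces $\chi\lam(\nu-\mu)\ge -\chi\lam\mu$ and $1+\mu\lam-\chi\lam\mu = 1+(1-\chi)\mu\lam>0$. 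For part (a), the upper bound $\sigma\le\lam\mu$ amounts (after cross-multiplying by the positive denominator) to showing $\nu+\chi(\mu-\nu)+\mu\nu\lam \le \mu\big(1+(1-\chi)\mu\lam+\chi\nu\lam\big)$; expanding both sides and cancelling, this reduces to $\nu + \chi(\mu-\nu) \le \mu + \chi\mu\nu\lam - (1-\chi)... $ — more carefully, it reduces to $(1-\chi)(\mu-\nu) \ge 0$ plus a nonnegative multiple of $\lam$, which holds since $\mu\ge\nu$ and $\chi<1$. The lower bound $\sigma\ge\lam[\chi\mu+(1-\chi)\nu]$ similarly reduces, after clearing denominators, to an inequality of the form $(\text{nonneg})\cdot\lam^2 \ge 0$; I would isolate the coefficient of $\lam^2$ and check it equals something like $\mu\nu - (\chi\mu+(1-\chi)\nu)(\chi\nu + (1-\chi)\mu) \ge 0$, which follows from the concavity/AM-type inequality $(\chi a + (1-\chi)b)(\chi b+(1-\chi)a)\ge ab$ for $a,b\ge 0$. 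Finally $\sigma\ge\lam\nu$ is immediate from $\chi\mu+(1-\chi)\nu\ge\nu$.

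For part (b), monotonicity in $\lam$, I would write $\sigma(\lam) = \lam\, g(\lam)$ where $g(\lam) = \frac{\nu+\chi(\mu-\nu)+\mu\nu\lam}{1+(1-\chi)\mu\lam+\chi\nu\lam}$, and argue it is cleanest to substitute $t=1/\lam$ and show $\sigma$ is non-decreasing in $\lam$ by checking $\frac{d\sigma}{d\lam}\ge 0$ directly: the numerator of the derivative (a quadratic in $\lam$ with positive leading coefficient, after the quotient rule) should be seen to have all nonnegative coefficients, or alternatively one rewrites $\sigma$ as a ratio $\frac{N(\lam)}{D(\lam)}$ of affine-in-$\lam$ functions times $\lam$ and uses the standard fact that such a Möbius-type expression is monotone when a certain $2\times 2$ determinant has a fixed sign. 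I expect the determinant/coefficient bookkeeping here to be the main obstacle — it is not deep, but there is genuine risk of a sign slip, so I would double-check it against the boundary cases $\lam\to 0^+$ (where $\sigma\to 0$) and $\lam\to\infty$ (where $\sigma\to \frac{\mu\nu}{(1-\chi)\mu+\chi\nu}\lam \cdot$const, i.e. grows linearly, consistent with monotonicity) and against the sandwiching bounds from (a).

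For part (c), I would bound $\frac{1+\sigma}{\sigma} = 1 + \frac1\sigma$ using the two lower bounds on $\sigma$ from (a). From $\sigma\ge\lam[\chi\mu+(1-\chi)\nu]\ge\chi\lam\mu$ we get $\frac{1+\sigma}{\sigma}\le 1 + \frac1{\chi\lam\mu}\le\frac1\chi(1+\frac1{\lam\mu})$, using $\chi\le 1$ so that $1\le\frac1\chi$. From $\sigma\ge\lam\nu$ we get $\frac{1+\sigma}{\sigma}\le 1+\frac1{\lam\nu}$ directly. Taking the minimum of the two gives exactly the claimed bound, and this part is routine once (a) is in hand. I would make sure to note the convention that when $\nu=0$ the second term is $+\infty$ so the bound still reads correctly, and similarly the statement implicitly assumes $\mu>0$ which is given in the hypothesis.
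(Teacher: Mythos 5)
Your upper-bound argument in (a) and your monotonicity argument in (b) are workable, though (b) is done more cleanly in the paper: divide the numerator and denominator of \eqref{eq:sigma} by $\lam$ to get $\sigma(\lam)=\frac{\nu(1+\mu\lam)+\chi(\mu-\nu)}{\lam^{-1}+\mu+\chi(\nu-\mu)}$, whose numerator is non-decreasing in $\lam$ and whose (positive) denominator is decreasing in $\lam$, so no derivative or determinant bookkeeping is required.

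The genuine gap is in your lower bound for (a), and it propagates into (c). The inequality you cite, $(\chi a+(1-\chi)b)(\chi b+(1-\chi)a)\ge ab$, gives $(\chi\mu+(1-\chi)\nu)(\chi\nu+(1-\chi)\mu)\ge\mu\nu$, hence $\mu\nu-(\chi\mu+(1-\chi)\nu)(\chi\nu+(1-\chi)\mu)\le 0$ — the opposite sign to what you assert for the $\lam^2$-coefficient; your own auxiliary fact refutes the sign you need. In fact the inequality $\sigma\ge\lam[\chi\mu+(1-\chi)\nu]$ cannot be established because it is false: for $(\chi,\lam,\mu,\nu)=(1/2,\,2,\,1,\,0)$ one gets $\sigma=\frac{2\cdot(1/2)}{1+2-1}=\frac12$ while $\lam[\chi\mu+(1-\chi)\nu]=1$. (The weaker consequence $\sigma\ge\lam\nu$ does hold, and is the only piece used later in the paper.) This breaks your derivation of the first term in (c): the chain $\sigma\ge\lam[\chi\mu+(1-\chi)\nu]\ge\chi\lam\mu$, giving $\frac{1+\sigma}{\sigma}\le 1+\frac{1}{\chi\lam\mu}$, fails at the same parameter values, where $\frac{1+\sigma}{\sigma}=3$ but $1+\frac{1}{\chi\lam\mu}=2$, even though the final bound $\frac{1}{\chi}\left(1+\frac{1}{\lam\mu}\right)=3$ holds (with equality). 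The paper does not route (c) through (a): it writes $\frac{1+\sigma}{\sigma}=\frac{(1+\nu\lam)(1+\mu\lam)}{\lam[\nu(1+\mu\lam)+\chi(\mu-\nu)]}$ explicitly and checks directly that $\frac{1+\nu\lam}{\nu(1+\mu\lam)+\chi(\mu-\nu)}\le\frac{1}{\chi\mu}$ (which after clearing denominators reduces to $\nu(1+\mu\lam)(1-\chi)\ge 0$), and similarly for the $1+\frac{1}{\lam\nu}$ term. You should replace your argument for (c) with that direct verification rather than leaning on the spurious lower bound.
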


\begin{proof}
    a) It follows from \eqref{eq:sigma} and the fact that $\nu \le \mu$ that
 \[
    \sigma - \lam \mu =\frac{\lam(\mu-\nu)(1+\lam \mu)(\chi-1)}{1+\mu  \lambda+\chi  \lambda(\nu-\mu)}
    \ge \frac{\lam(\mu-\nu)(1+\lam \mu)(\chi-1)}{1+\mu  \lambda} =
\lam(\mu-\nu)(\chi-1),
    \]
    and hence that $\sigma \ge \lam [ \chi  \mu + (1-\chi)  \nu]$. Moreover, the second inequality of (a) immediately follows from the first identity above and the facts that $\chi<1$ and $\nu \le \mu$.

    b) Viewing $\sigma=\sigma(\lam)$ in \eqref{eq:sigma} as a function of $\lam$, one can verify that
\[
\sigma(\lam) = \frac{\nu(1+\mu \lam)+\chi(\mu-\nu)}{\lam^{-1}+\mu +\chi (\nu-\mu)}
\]
is a non-decreasing function of $\lam$.

    c) It follows from the definition of $\sigma$ in \eqref{eq:sigma} that 
    \[
    \frac{1+\sigma}{\sigma} = \frac{(1+\nu {\lam})(1+\mu {\lam})}{{\lam}[\nu(1+ \mu {\lam})+\chi(\mu-\nu)]}.
    \]
    Hence, it suffices to prove that
\begin{equation}\label{ineq:combine}
    \frac{(1+\nu {\lam})(1+\mu {\lam})}{{\lam}[\nu(1+ \mu {\lam})+\chi(\mu-\nu)]} \le \min\left\{ \frac{1}{\chi}\left(1+\frac{1}{{\lam} \mu}\right),  1+\frac{1}{{\lam} \nu}\right\}. 
\end{equation}
Since $\chi \in [0,1)$, it is easy to verify that
\[
\frac{1+\nu {\lam}}{\nu(1+ \mu {\lam})+\chi(\mu-\nu)} \le \frac{1}{\chi \mu},
\]
and hence that
\[
\frac{(1+\nu {\lam})(1+\mu {\lam})}{{\lam}[\nu(1+ \mu {\lam})+\chi(\mu-\nu)]} \le \frac{1+ \mu {\lam}}{\chi {\lam} \mu} = \frac{1}{\chi}\left(1+\frac{1}{{\lam} \mu}\right).
\]
Moreover, noting that $\chi \in [0,1)$ and $\mu\ge \nu$, we also have 
\[
\frac{(1+\nu {\lam})(1+\mu {\lam})}{{\lam}[\nu(1+ \mu {\lam})+\chi(\mu-\nu)]} \le 1+\frac{1}{{\lam} \nu}.
\]
Combining the above two inequalities, we conclude that \eqref{ineq:combine} holds.
\end{proof}

\subsection{Proofs of Lemmas  \ref{lem:tj} and \ref{lem:tj2}}\label{appendixupb}

Before presenting the proof of Lemma \ref{lem:tj},
we present and prove three technical results.

The  first one summarizes some basic properties of
U-PB.

\begin{lemma}\label{lem:101}
Assume that $\lam$ is the prox stepsize 
and $\hat x_{k-1}$ is the prox-center of a cycle of U-PB. Then, the following statements 
about this cycle hold:
	    \begin{itemize}
	        \item[a)] for every null iteration $j$ of the cycle, there exists a
         function $\overline f_{j}(\cdot)$ such that
	        \begin{align}
	            &\tau (\overline f_{j}+h) + (1-\tau) [\ell_f(\cdot;x_j)+h] \le f_{j+1} +h\le \phi, \label{eq:Gamma_j} \\
	            &\overline f_{j}+h \in \nConv{n}, \quad \overline f_{j}(x_j) = f_{j}(x_j),\label{eq:relation1}\\  
	       & x_j = \underset{u\in \R^n}\argmin \left \{\overline f_{j}(u)+h(u) + \frac{1}{2\lam} \|u-\hat x_{k-1}\|^2 \right\}, \label{eq:relation}
	        \end{align} 
         where 
         \begin{equation}\label{deftauk}
            \tau :=\frac{u_{\lambda}}{1+u_{\lambda}}
        \end{equation} 
         and $u_\lam$ is as in \eqref{def:ulam};
	        \item[b)] for every iteration $j$ of the cycle and $u\in \R^n$, we have
       \begin{equation}\label{ineq:Gammaj}
           \overline f_{j}(u)+h(u) + \frac1{2\lam}\|u-\hat x_{k-1}\|^2\ge m_j+ \frac1{2 \lam}\|u-x_j\|^2 ,
       \end{equation}
       where 
       \begin{equation}\label{defmj}
            m_j:=(f_j+h)(x_j)+\frac{1}{2 \lambda}\|x_j-\hat x_{k-1}\|^2.
        \end{equation}
	    \end{itemize}
	\end{lemma}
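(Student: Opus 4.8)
The plan is to obtain both statements by reading off the contract of the blackbox $\mathrm{BU}$ and then invoking strong convexity of the prox subproblem.

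For part (a), I would fix a null iteration $j$ of the cycle, so that step 3.a of U-PB sets $f_{j+1}=\mathrm{BU}(\hat x_{k-1},x_j,f_j,\lambda)$; here $x_j$, being given by \eqref{def:xj} with stepsize $\lambda$, meets the $\mathrm{BU}$ input requirement that $x_j$ minimize $f_j(\cdot)+h(\cdot)+\tfrac1{2\lambda}\|\cdot-\hat x_{k-1}\|^2$. I would then let $\overline f_j$ be the auxiliary function ``$\overline m_f$'' associated by the definition of $\mathrm{BU}$ with this call, so that \eqref{def:bar Gamma} holds with $(x^c,x,m_f)=(\hat x_{k-1},x_j,f_j)$. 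With this choice, the identity $\overline f_j(x_j)=f_j(x_j)$ in \eqref{eq:relation1} and the minimization property \eqref{eq:relation} are literally two of the three conditions in \eqref{def:bar Gamma}, while $\overline f_j+h\in\nConv{n}$ follows since $\overline f_j$ is convex (for the instances (E1)--(E2) it is, respectively, a convex combination and a maximum of affine minorants of $f$) and $h\in\nConv{n}$ with $\nu=\mu(h)$. For the two-sided bound \eqref{eq:Gamma_j}: the right inequality $f_{j+1}+h\le\phi$ is immediate from $f_{j+1}=m_f^+\le f$ in \eqref{def:Gamma} and $\phi=f+h$; for the left inequality I would start from $\max\{\overline f_j(\cdot),\ell_f(\cdot;x_j)\}\le m_f^+(\cdot)=f_{j+1}(\cdot)$, again \eqref{def:Gamma}, and use the elementary fact $\tau a+(1-\tau)b\le\max\{a,b\}$, which applies because $\tau=u_\lambda/(1+u_\lambda)\in[0,1]$ by \eqref{deftauk} and $u_\lambda\ge0$; adding $h$ to both sides then gives the first inequality in \eqref{eq:Gamma_j}. (The precise value of $\tau$ is irrelevant for (a); it is fixed now only for later use in Lemma~\ref{lem:tj}.)

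For part (b), I would argue uniformly over iterations $j$ of the cycle. The key point is that the relevant function --- namely $\overline f_j$ from part (a) at a null iteration, or simply $f_j$ itself at the first (or any non-null) iteration, for which the analogues of \eqref{eq:relation1}--\eqref{eq:relation} hold trivially (the minimization property being \eqref{def:xj}) --- is in all cases a convex minorant of $f$ with $x_j$ as the minimizer of $g_j(u):=\overline f_j(u)+h(u)+\tfrac1{2\lambda}\|u-\hat x_{k-1}\|^2$. Since $\overline f_j+h$ is convex and $\tfrac1{2\lambda}\|\cdot-\hat x_{k-1}\|^2$ is $(1/\lambda)$-strongly convex, $g_j$ is $(1/\lambda)$-strongly convex, whence the standard bound $g_j(u)\ge g_j(x_j)+\tfrac1{2\lambda}\|u-x_j\|^2$ for every $u\in\R^n$. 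Finally $g_j(x_j)=\overline f_j(x_j)+h(x_j)+\tfrac1{2\lambda}\|x_j-\hat x_{k-1}\|^2=(f_j+h)(x_j)+\tfrac1{2\lambda}\|x_j-\hat x_{k-1}\|^2=m_j$ by \eqref{eq:relation1} and \eqref{defmj}, which is exactly \eqref{ineq:Gammaj}.

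I do not anticipate an essential obstacle; the argument is essentially a transcription of the $\mathrm{BU}$ contract, in the spirit of Subsection~4.2 of \cite{liang2023unified}. The only care needed is bookkeeping --- keeping track of which displayed relations come verbatim from the $\mathrm{BU}$ specification \eqref{def:Gamma}--\eqref{def:bar Gamma} and which require the trivial extra remarks that $\overline f_j$ is convex and that $g_j$ is $(1/\lambda)$-strongly convex --- together with treating the first iteration of a cycle separately in (b), since there no null update has yet produced a $\overline f_j$ and one simply uses $f_j$ in its place.
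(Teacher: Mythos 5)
Your proof is correct and follows essentially the same route as the paper's: read off the BU contract (\eqref{def:Gamma}--\eqref{def:bar Gamma}) to produce $\overline f_j$, observe that the convex-combination inequality $\tau a+(1-\tau)b\le\max\{a,b\}$ for $\tau\in[0,1]$ gives the lower bound in \eqref{eq:Gamma_j}, and then use $(1/\lam)$-strong convexity of $\overline f_j+h+\tfrac1{2\lam}\|\cdot-\hat x_{k-1}\|^2$ together with $\overline f_j(x_j)=f_j(x_j)$ to get \eqref{ineq:Gammaj}. The one place where you are more careful than the paper's own proof is part (b): the lemma quantifies over ``every iteration $j$ of the cycle,'' but $\overline f_j$ is only produced by part (a) at null iterations; your observation that at a non-null iteration one can take $\overline f_j:=f_j$ (for which the needed identities and argmin property hold by \eqref{def:xj}) closes this small gap, which the paper leaves implicit because it only ever invokes \eqref{ineq:Gammaj} at null iterations.
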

\begin{proof} a) Since $j$ is a null iteration of the $k$-th serious cycle, it follows from step 3.a of U-PB that
        $f_{j+1}=\mbox{BU}(\hat x_{k-1},x_j,f_j,\lambda)$.
        Using the properties of the BU blackbox, this implies the existence of $\overline f_j$ satisfying
\eqref{eq:relation1}, \eqref{eq:relation} and
\begin{equation}\label{eqfjp1}
        \max\{{\bar f}_j+h, \ell_f(\cdot;x_j)+h\} \le f_{j+1}+h \le \phi.
        \end{equation}
The statement now follows from the fact that \eqref{eqfjp1}  immediately implies \eqref{eq:Gamma_j}.

	    b) Since the objective function in the last identity of \eqref{eq:relation} is $\lam^{-1}$-strongly convex, we have
     	    \begin{equation}\label{ineafbar}
	    \overline f_{j}(u) + h(u)+\frac{1}{2\lam} \|u-\hat x_{k-1}\|^2  \ge \overline f_{j}(x_j) +h(x_j)+ \frac{1}{2\lam} \|x_j-\hat x_{k-1}\|^2  + \frac{1}{2 \lam}\|u-x_j\|^2,
	    \end{equation}
	   which immediately implies \eqref{ineq:Gammaj} in view of 
       the  identity
       in \eqref{eq:relation1} and the definition of $m_j$ in \eqref{defmj}.
\end{proof}

The second technical result provides a recursive bound on the quantity $m_j$
defined in \eqref{defmj}.

\begin{lemma}\label{lem:mj}
 If $j$ is a null iteration of a cycle, then
\begin{align}\label{ineqmtauok}
m_{j+1} - \tau m_j  \geq (1-\tau)   \left[\phi(x_{j+1}) +
		\frac{1}{2\lam} \|x_{j+1}-\hat x_{k-1}\|^2\right]-  \frac{(1-\tau) \varepsilon}{2}
\end{align}
where $\tau$ is defined in \eqref{deftauk} and $\varepsilon = (1-\chi)\bar \varepsilon/2$. 
\end{lemma}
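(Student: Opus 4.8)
The plan is to lower-bound $m_{j+1}$, which by \eqref{def:xj} and \eqref{defmj} is the optimal value of the prox-subproblem with model $f_{j+1}+h$, attained at $x_{j+1}$; note that since $j$ is a null iteration it is not the last one of its cycle, so $x_{j+1}$ is indeed computed with the same prox-center $\hat x_{k-1}$ and stepsize $\lam$. First I would evaluate the two-cut inequality \eqref{eq:Gamma_j} at $u=x_{j+1}$, add $\|x_{j+1}-\hat x_{k-1}\|^2/(2\lam)$ to both sides, and split this quadratic as $\tau\cdot\|x_{j+1}-\hat x_{k-1}\|^2/(2\lam)+(1-\tau)\cdot\|x_{j+1}-\hat x_{k-1}\|^2/(2\lam)$, obtaining
\[
m_{j+1}\ge \tau\Big[\overline f_j(x_{j+1})+h(x_{j+1})+\tfrac1{2\lam}\|x_{j+1}-\hat x_{k-1}\|^2\Big]+(1-\tau)\Big[\ell_f(x_{j+1};x_j)+h(x_{j+1})+\tfrac1{2\lam}\|x_{j+1}-\hat x_{k-1}\|^2\Big].
\]
Bounding the first bracket below by $m_j+\|x_{j+1}-x_j\|^2/(2\lam)$ via Lemma~\ref{lem:101}(b) (applied to iteration $j$ with $u=x_{j+1}$) gives
\[
m_{j+1}-\tau m_j\ge \frac{\tau}{2\lam}\|x_{j+1}-x_j\|^2+(1-\tau)\Big[\ell_f(x_{j+1};x_j)+h(x_{j+1})+\tfrac1{2\lam}\|x_{j+1}-\hat x_{k-1}\|^2\Big].
\]

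Next I would replace the linearization by the function value: applying \eqref{ineq:est} with $(x,y)=(x_{j+1},x_j)$ yields $\ell_f(x_{j+1};x_j)+h(x_{j+1})\ge\phi(x_{j+1})-2M_f\|x_{j+1}-x_j\|-\tfrac{L_f}{2}\|x_{j+1}-x_j\|^2$. Substituting this and rearranging, the claim \eqref{ineqmtauok} reduces to the scalar inequality, with $d:=\|x_{j+1}-x_j\|\ge 0$,
\[
\frac{\tau}{2\lam}d^2-(1-\tau)\Big(2M_f d+\frac{L_f}{2}d^2\Big)\ge-\frac{(1-\tau)\varepsilon}{2}.
\]

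Finally, I would insert the calibrated value $\tau=u_\lam/(1+u_\lam)$ from \eqref{deftauk}, so that $1-\tau=1/(1+u_\lam)$ and $\tau=u_\lam(1-\tau)$, and use that $\varepsilon=(1-\chi)\bar\varepsilon/2$ turns \eqref{def:ulam} into $u_\lam=4\lam(M_f^2+\varepsilon L_f)/\varepsilon$, whence $\tau/(2\lam)=(1-\tau)(2M_f^2/\varepsilon+2L_f)$. The left-hand side above then equals
\[
(1-\tau)\Big[\Big(\frac{2M_f^2}{\varepsilon}+\frac{3L_f}{2}\Big)d^2-2M_f d\Big]\ge(1-\tau)\Big[\frac{2M_f^2}{\varepsilon}d^2-2M_f d\Big]=(1-\tau)\Big[\frac{1}{2\varepsilon}(2M_f d-\varepsilon)^2-\frac{\varepsilon}{2}\Big]\ge-\frac{(1-\tau)\varepsilon}{2},
\]
which is precisely the required bound. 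The main obstacle — really the only nontrivial point — is this last perfect-square bookkeeping: the weight $\tau$ (equivalently the threshold $u_\lam$) is chosen exactly so that the coefficient $\tau/(2\lam)$ simultaneously dominates the $\frac{L_f}{2}d^2$ term and, after completing the square against the constant $(1-\tau)\varepsilon/2$, absorbs the $2M_f d$ term coming from the nonsmooth part of the hybrid estimate; all remaining manipulations are routine.
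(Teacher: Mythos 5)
Your proof is correct and follows essentially the same route as the paper: it decomposes $m_{j+1}$ via the two-cut inequality \eqref{eq:Gamma_j}, lower-bounds the $\overline f_j$ branch by $m_j+\|x_{j+1}-x_j\|^2/(2\lam)$ using Lemma~\ref{lem:101}(b), replaces the linearization by $\phi(x_{j+1})$ via \eqref{ineq:est}, and absorbs the resulting $2M_f\|x_{j+1}-x_j\|$ term by completing the square against $(1-\tau)\varepsilon/2$, which is exactly the inequality $a^2-2ab\ge -b^2$ the paper invokes. The only cosmetic difference is that you carry the exact identity $\tau/(2\lam)=(1-\tau)(2L_f+2M_f^2/\varepsilon)$ through to the end and then drop the spare $\tfrac{3L_f}{2}d^2$ term, whereas the paper discards the extra $L_f$ weight up front in \eqref{reltauk}; the two bookkeeping choices are equivalent.
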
 

\begin{proof}
    Using the definitions of $\tau$ and $u_\lam$  in \eqref{deftauk} and \eqref{def:ulam}, respectively, we have  
\begin{equation}\label{reltauk}
\frac{\tau}{2\lam(1-\tau)} \stackrel{\eqref{deftauk}} = \frac{u_\lam}{2\lam} \stackrel{\eqref{def:ulam}}= 2 L_f + \frac{4M_f^2}{(1-\chi)\bar \varepsilon} = 2 L_f + \frac{2  M_f^2}{\varepsilon} \ge  \frac{  L_f}2 + \frac{2  M_f^2}{\varepsilon},
\end{equation}
where the last identity is due to the relation $\varepsilon = (1-\chi)\bar \varepsilon/2$.
Using the definition of $m_j$ in \eqref{defmj}, and relations \eqref{eq:Gamma_j} and \eqref{ineq:Gammaj} with $u=x_{j+1}$, we have
		\begin{align*}
		m_{j+1} & \overset{\eqref{defmj}}{=} (f_{j+1}+h)(x_{j+1}) + \frac{1}{2\lam} \|x_{j+1}-\hat x_{k-1}\|^2\\
		&\overset{\eqref{eq:Gamma_j}}{\ge} (1-\tau) \left[\ell_f(x_{j+1};x_j) + h(x_{j+1}) + \frac{1}{2\lam} \|x_{j+1}-\hat x_{k-1}\|^2\right] + \tau \left( (\overline f_{j}+h)(x_{j+1}) + \frac{1}{2\lam} \|x_{j+1}- \hat x_{k-1}\|^2 \right) \\
		& \overset{\eqref{ineq:Gammaj}}{\ge} (1-\tau) \left[\ell_f(x_{j+1};x_j) + h(x_{j+1}) + \frac{1}{2\lam} \|x_{j+1}-\hat x_{k-1}\|^2\right] + \tau \left( m_j + \frac{1}{2 \lam} \|x_{j+1} -x_j\|^2 \right).
		\end{align*}
  This inequality and \eqref{reltauk} then imply 
  \begin{equation}\label{ineq:mj}
      m_{j+1} - \tau m_j \ge (1-\tau) \left[ \ell_f(x_{j+1};x_j) + h(x_{j+1}) + \frac{1}{2\lam} \|x_{j+1}-\hat x_{k-1}\|^2 +  \left(\frac{  L_f}2 +\frac{2  M_f^2}{\varepsilon}\right) \|x_{j+1}-x_j\|^2 \right].
  \end{equation}
Using \eqref{ineq:est} with $(x,y)=(x_{j+1},x_j)$ and the fact that $\phi=f+h$, we have
        \begin{equation}\label{ineq:ellphi}
            \ell_f(x_{j+1};x_j) + h(x_{j+1}) + \frac{  L_f}2 \|x_{j+1}-x_j\|^2\ge \phi(x_{j+1}) - 2   M_f \|x_{j+1}-x_j\|.
        \end{equation}
        Using this inequality and relation \eqref{ineq:mj}, we conclude that
        \begin{align*}
&m_{j+1} - \tau m_j \\
		&\overset{\eqref{ineq:mj},\eqref{ineq:ellphi}}{\ge} (1-\tau) \Big(\phi(x_{j+1}) + \frac{1}{2 \lambda}\|x_{j+1}-\hat x_{k-1}\|^2\Big) +\frac{2(1-\tau)}{ \varepsilon}\Big(  M_f^2\|x_{j+1} -x_j\|^2 - M_f \varepsilon\|x_{j+1} -x_j\|\Big) \\
	  &\hspace*{0.4cm} \ge   (1-\tau)\left[\phi(x_{j+1}) +
		\frac{1}{2\lam} \|x_{j+1}-\hat x_{k-1}\|^2\right]  -  \frac{(1-\tau) \varepsilon}{2}, 
  \end{align*}
		where the last inequality follows  the inequality $ a^2-2ab \ge - b^2$ with $a=M_f\|x_{j+1}-x_j\|$ and $b=\varepsilon/2$.
\end{proof}

The next technical result establishes an important recursive formula for the sequence $\{t_j\}$ defined in \eqref{ineq:hpe1}.

\begin{lemma} \label{lembdj}
Let $j$ be an iteration of a cycle with step size $\lambda$
and first iteration $i$.
Then 
\begin{equation}\label{eqtjit}
t_j-\frac{(1-\chi)\bar \varepsilon}{4} 
\leq \tau^{j-i} \left(t_i-\frac{(1-\chi)\bar \varepsilon}{4}\right)
\end{equation}
where $\tau$ is defined in \eqref{deftauk}. 
\end{lemma}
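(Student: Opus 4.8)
\textbf{Proof proposal for Lemma \ref{lembdj}.}

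The plan is to establish a one-step recursion of the form $t_{j+1} - (1-\chi)\bar\varepsilon/4 \le \tau\,(t_j - (1-\chi)\bar\varepsilon/4)$ for every null iteration $j$ of the cycle, and then iterate it from $i$ to $j$. Recalling that $\varepsilon = (1-\chi)\bar\varepsilon/2$, so that $(1-\chi)\bar\varepsilon/4 = \varepsilon/2$, the target recursion reads $t_{j+1} - \varepsilon/2 \le \tau(t_j - \varepsilon/2)$. The key observation is that, by the definition \eqref{ineq:hpe1}, $t_j = \bar\phi_j - m_j$ with $m_j$ as in \eqref{defmj}, and by \eqref{def:txj2} the sequence $\bar\phi_j$ is the running minimum of $\phi(x_\ell) + (\chi/2\lam)\|x_\ell - \hat x_{k-1}\|^2$ over $\ell$ in the current cycle. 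The hard technical input, already available, is Lemma \ref{lem:mj}, which gives
\[
m_{j+1} - \tau m_j \ge (1-\tau)\left[\phi(x_{j+1}) + \frac{1}{2\lam}\|x_{j+1} - \hat x_{k-1}\|^2\right] - \frac{(1-\tau)\varepsilon}{2}.
\]

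First I would note that $\phi(x_{j+1}) + \frac{1}{2\lam}\|x_{j+1} - \hat x_{k-1}\|^2 \ge \phi(x_{j+1}) + \frac{\chi}{2\lam}\|x_{j+1} - \hat x_{k-1}\|^2 \ge \bar\phi_{j+1}$ since $\chi \in [0,1)$ and $\bar\phi_{j+1}$ is a running minimum that includes the index $j+1$; combining this with the Lemma \ref{lem:mj} bound yields
\[
m_{j+1} - \tau m_j \ge (1-\tau)\bar\phi_{j+1} - \frac{(1-\tau)\varepsilon}{2}.
\]
Second, I would also use $\bar\phi_{j+1} \le \bar\phi_j$ (again from \eqref{def:txj2}) to write $m_{j+1} \ge \tau m_j + (1-\tau)\bar\phi_{j+1} - (1-\tau)\varepsilon/2$ and then subtract both sides from $\bar\phi_{j+1}$:
\[
t_{j+1} = \bar\phi_{j+1} - m_{j+1} \le \bar\phi_{j+1} - \tau m_j - (1-\tau)\bar\phi_{j+1} + \frac{(1-\tau)\varepsilon}{2} = \tau(\bar\phi_{j+1} - m_j) + \frac{(1-\tau)\varepsilon}{2} \le \tau(\bar\phi_j - m_j) + \frac{(1-\tau)\varepsilon}{2} = \tau t_j + \frac{(1-\tau)\varepsilon}{2}.
\]
Subtracting $\varepsilon/2$ from both ends gives exactly $t_{j+1} - \varepsilon/2 \le \tau(t_j - \varepsilon/2)$, the desired one-step contraction.

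Third, I would iterate: for any iteration $j$ of the cycle with first iteration $i$, applying the one-step bound $j - i$ times (all intermediate iterations $i, i+1, \ldots, j-1$ are null iterations of the cycle, since the cycle does not terminate before $j$) yields $t_j - \varepsilon/2 \le \tau^{j-i}(t_i - \varepsilon/2)$, which is \eqref{eqtjit} upon rewriting $\varepsilon/2 = (1-\chi)\bar\varepsilon/4$. The one subtlety to check carefully is the case $j = i$, where the bound holds trivially with $\tau^0 = 1$, and the fact that Lemma \ref{lem:mj} indeed applies at each step because every iteration strictly before the last one in a cycle is a null iteration by the logic of step 3 of U-PB. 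The main obstacle is really just bookkeeping — making sure the running-minimum inequalities for $\bar\phi_j$ are applied in the right direction and that the constant $(1-\tau)\varepsilon/2$ telescopes correctly into the fixed point $\varepsilon/2$ of the affine recursion $t \mapsto \tau t + (1-\tau)\varepsilon/2$; there is no deep difficulty once Lemma \ref{lem:mj} is in hand.
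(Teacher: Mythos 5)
Your proof is correct and follows essentially the same route as the paper: establish the one-step affine recursion $t_{\ell+1}-\varepsilon/2 \le \tau(t_\ell-\varepsilon/2)$ for each null iteration by combining Lemma~\ref{lem:mj} with the running-minimum property of $\bar\phi_\ell$ from \eqref{def:txj2}, then iterate from $i$ to $j$. The only cosmetic difference is that you first replace $\phi(x_{\ell+1})+\tfrac{1}{2\lam}\|x_{\ell+1}-\hat x_{k-1}\|^2$ by $\bar\phi_{\ell+1}$ before manipulating, whereas the paper carries that term through a single chain of inequalities and applies \eqref{def:txj2} at the end; the substance and the edge-case handling ($j=i$, every non-final iteration of a cycle being null) are identical.
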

\begin{proof} If $j=i$, \eqref{eqtjit} trivially holds.
We next show that \eqref{eqtjit} holds for  $j \geq i+1$. It suffices to show that for any 
$\ell \in \{i,\ldots,j-1\}$,
\begin{equation}\label{ineq:tj-recur}
         t_{\ell+1}-\frac{(1-\chi)\bar \varepsilon}{4} \le \tau \left(t_{\ell} - \frac{(1-\chi)\bar \varepsilon}{4}  \right).
     \end{equation}
     Let $\ell \in \{i,\ldots,j-1\}$  be given and note that $\ell$ is a null iteration of the cycle. Using the definition of $t_\ell$  in \eqref{ineq:hpe1} and Lemma \ref{lem:mj}
     with $j=\ell$, we have
        \begin{align*}
            t_{\ell+1}- \tau t_{\ell}  \stackrel{\eqref{ineq:hpe1}}= & \bar \phi_{\ell+1}   - m_{\ell+1}- \tau [ \bar \phi_{\ell}   - m_{\ell} ] = [ \bar \phi_{\ell+1}   - \tau \bar \phi_{\ell} ] -  [ m_{\ell+1}- \tau  m_{\ell} ] \\   \stackrel{\eqref{ineqmtauok}}\leq  &\bar \phi_{\ell+1} -\tau \bar \phi_{\ell}- (1-\tau)\left[\phi(x_{\ell+1}) +
		\frac{1}{2\lam} \|x_{\ell+1}-\hat x_{k-1}\|^2 \right] + \frac{(1-\tau) (1-\chi){\bar \varepsilon}}{4} \leq \frac{(1-\tau)(1-\chi) {\bar \varepsilon}}{4}, 
        \end{align*}
where the last inequality follows from \eqref{def:txj2} and the fact that $\tau \in (0,1)$. We have thus shown that \eqref{ineq:tj-recur} holds for
any $\ell \in \{i,\ldots,j-1\}$.
\end{proof}

\vgap

We are now ready to give the proof of the first main result of this subsection.

\vgap
\noindent
 {\textbf{Proof of Lemma \ref{lem:tj}}} 
By \eqref{eqtjit}, we obtain
\begin{align}\label{ineqtepsok}
\frac{4 t_{j}} {(1-\chi)\bar \varepsilon} -1 \leq
\frac{4} {(1-\chi)\bar \varepsilon} \tau^{j-i}t_i.
\end{align}
Using relation \eqref{deftauk}, we then have
     \[
     0 < \log \left( \frac{4 t_{j}} {(1-\chi)\bar \varepsilon} - 1 \right)  \stackrel{\eqref{ineqtepsok}}{\le} \log\left( \frac{4 \tau^{j-i} t_i}{(1-\chi)\bar \varepsilon} \right) \stackrel{\eqref{deftauk}}= \log\left( \frac{4  t_i}{(1-\chi)\bar \varepsilon} \right) + \log \left( \left[\frac{u_{\lambda}}{1+u_{\lambda}} \right]^{j-i} \right) \le \log\left( \frac{4  t_i}{(1-\chi)\bar \varepsilon} \right) -
     \frac{j-i}{1+u_{\lambda}}
     \]
     where for the last inequality we have used
     the $\log(x)\leq x-1$ which holds for $x>0$. This completes the proof of 
     \eqref{ineq:tj-recur2}.
\QEDA

\vgap

We next prove the second main result of this subsection.

\vgap

\noindent
{\textbf{Proof of Lemma \ref{lem:tj2}.}}
We first show that the inequality
\begin{equation}\label{ineqcrucialfirst}
t_i \leq  2  M_f \|x_i-{\hat x}_{k-1}\|
 + \frac{  L_f}{2}\|x_i-{\hat x}_{k-1}\|^2 - 
 \frac{1-\chi}{2\lam} \|x_i-{\hat x}_{k-1}\|^2 
\end{equation}
holds without making the assumptions of either case a) or case b).
Indeed,
using the definition of $t_j$  in \eqref{ineq:hpe1}
for $j=i$
and relation \eqref{def:txj2}, we have
\begin{align*}
t_i & \stackrel{\eqref{defmj}}{=} \bar \phi_i - (f_i + h)(x_i)-\frac1{2\lambda}\|x_i-{\hat x}_{k-1}\|^2 \\
& \stackrel{\eqref{def:txj2}}\leq \phi(x_{i}) +
		\frac{\chi}{2\lam} \|x_{i}-\hat x_{k-1}\|^2 -
(f_i + h)(x_i) - \frac1{2 \lambda}\|x_i-{\hat x}_{k-1}\|^2\\
& \;=  f(x_i)-
f_i(x_i)+\frac{\chi-1}{2 \lambda}\|x_i-{\hat x}_{k-1}\|^2,
\end{align*}
where the last equality follows from the fact that $\phi=f+h$.
Since $i$ is the first iteration of the cycle, it follows from step 3.d of U-PB that
$f_i(\cdot) \geq \ell_f(\cdot;{\hat x}_{k-1})$.
Combining this inequality with the above one, and using \eqref{ineq:est} with 
$(x,y)=(x_i,\hat x_{k-1})$, we conclude that
\begin{align}
t_i & \le 
f(x_i)-
\ell_{f}(x_i;{\hat x}_{k-1})+\frac{\chi-1}{2 \lambda}\|x_i-{\hat x}_{k-1}\|^2 \nn \\
 &
\stackrel{\eqref{ineq:est}}{\le}
 2  M_f \|x_i-\hat x_{k-1}\| + \frac{  L_f}2 \|x_i-{\hat x}_{k-1}\|^2+\frac{\chi-1}{2 \lambda   } \|x_i-{\hat x}_{k-1}\|^2 \label{ineq:reuse} \\
 &= 2  M_f \|x_i-{\hat x}_{k-1}\| - 
 \frac{1-\chi-\lam   L_f}{2\lam} \|x_i-{\hat x}_{k-1}\|^2 \nn
\end{align}
and hence that \eqref{ineqcrucialfirst} holds.

We next prove that a)  holds. Assume that $\lam>0$ is such that $\lam \le (1-\chi)/(2L_f)$. Note that this implies that 
 $1-\chi \ge 2\lambda L_f >0$, and hence
 $1-\chi - \lam L_f>0$.
Maximizing the right-hand side of \eqref{ineqcrucialfirst}  with respect to $\|x_i-\hat x_{k-1}\|$,  we conclude that
$t_i \leq (2 \lambda {  M}_f^2)/(1-\chi-\lambda {  L}_f)$, and hence that
\eqref{ineq:ti} due to the inequality $(1-\chi)/2 \ge \lam L_f$.

We next show that b) holds. Assume that (A4) holds.
Using \eqref{ineqcrucialfirst}, we obtain
 \begin{align*}
t_i &  \leq 2  M_f \|x_i-{\hat x}_{k-1}\|
 + \frac{  L_f D}{2}\|x_i-{\hat x}_{k-1}\| - 
 \frac{1-\chi}{2\lam} \|x_i-{\hat x}_{k-1}\|^2 \\
 & = \left( 2  M_f + \frac{  L_f D}{2} \right) \|x_i-{\hat x}_{k-1}\|
- 
 \frac{1-\chi}{2\lam} \|x_i-{\hat x}_{k-1}\|^2 \\
 &\le \left( 2  M_f + \frac{  L_f D}{2} \right)^2 \frac{\lam}{2(1-\chi)}
 \leq \frac{\lam(4 M_f^2 + L_f^2 D^2/4)}{1-\chi},
\end{align*}
where the second last inequality is due to the inequality that
$-a_0 x^2+b_0 x \leq b_0^2/(4a_0)$ for $a_0>0$, and the last inequality is due to the fact that $(a+b)^2 \le 2(a^2+b^2)$.
\QEDA















\if{
Assume 
\[
\lam \le 
 (1-\chi) \lam_0
 \]
 Need to to enforce 
 \[
 \lam \alpha \tau^{N-1} \le \varepsilon
 \]
 where
 \[
 \alpha = \frac{4M^2}{1-\chi}
 \]
 A sufficient condition is
 \[
(1-\chi)\lam_0 \alpha \tau^{N-1} \le \varepsilon
 \]
 or
 \[
 \theta \tau^{N-1} \le \varepsilon
 \]
 or
 \[
 \frac{\theta}{\varepsilon} \le \left( \frac{1}{\tau} \right)^{N-1}
 \]
 where
 \[
 \theta =(1-\chi) \lam_0 \alpha = 4 M^2 \lam_0
 \]
 Taking log of both sides, we  get
 \[
 \log 
 \frac{\theta}{\varepsilon} \le (N-1) \log \left( \frac{1}{\tau} \right)
 \]
 A sufficient condition for the above to hold is
 \[
 \log 
 \frac{\theta}{\varepsilon} \le (N-1) \frac{\tau^{-1}- 1}{\tau^{-1}} = (N-1)(1-\tau)
 \]
 Now 
 \[
 \tau = 1 - (1+ \lam \beta)^{-1}
 \]
 where $\beta =4T^2/\varepsilon$. So
 \[
 (1+\lam \beta)^{-1} \ge \frac{1}{N-1} \log 
 \frac{\theta}{\varepsilon} 
 \]
 or
 \[
 1+\lam \beta \le \frac{N-1}{\log (\theta\varepsilon^{-1})}
 \]
 or
 \[
 \lam \le \frac1\beta \left[ 
 \frac{N-1}{ \log (\theta\varepsilon^{-1})} -1 \right] 
 = \frac{\varepsilon}{4T^2} \left[ 
 \frac{N-1}{\log (\theta\varepsilon^{-1})}
-1 \right] 
 \]
 Assume
 $ \theta \varepsilon^{-1}=aN$.
 \[
 \frac{N-1}{\log (aN)}-1 \ge \frac{N-1}{2\log (a N)} = \frac{N-1}{2\log a + 2 \log N} = \frac{N-1}{ 2 \log N - 1}
 \ge \frac{N-1}{ 2 \log N }
 \]
 or
 \[
 \frac{N-1}{2\log (aN) } \ge 1
 \]
 For $N=2$, it means
 \[
 \frac{1}{2 \log(2a)} =1
 \]
 or
 \[
 \log 2a = \frac12
 \]
 or
 \[
 \log a = -\frac12
 \]
 Now, if $N \ge 2$, choose $\lam_0$ so that
 \[
 \log (\theta \varepsilon^{-1} ) = \log( 4 M^2 \lam_0 \varepsilon^{-1} ) = 1/2
 \]
 Then
 \[
 \lam \le \frac{\varepsilon}{4T^2} \left[ 
 \frac{N-1}{\log (\theta\varepsilon^{-1})}
-1 \right] = \frac{(2N-3) \varepsilon}{4T^2}

Try
\[
\log(\theta \varepsilon^{-1})  = \log N
\]
or
\[
4 M^2 \lambda_0 \varepsilon^{-1}= \theta \varepsilon^{-1} =N
\]
So
\[
\lam_0 = \frac{N \varepsilon}{4M^2}
\]
Try
\[
\log(\theta \varepsilon^{-1})  =
\log \log N
\]
or
\[
4 M^2 \lambda_0 \varepsilon^{-1}= \theta \varepsilon^{-1} =\log N
\]

So
\[
\lam_0 = \frac{\varepsilon\log N }{4M^2}
\]

]

$$
\lambda \leq (1-\chi)\lambda_0 =(1-\chi) \frac{\varepsilon\log N }{4M^2}
$$

}\fi

\end{document}